\numberwithin{equation}{section}
\newtheorem{theorem}{Theorem}[section]
\newtheorem{proposition}{Proposition}[section]
\newtheorem{lemma}{Lemma}[section]
\newtheorem{definition}{Definition}[section]
\newtheorem{corollary}{Corollary}[section]
\newtheorem{remark}{Remark}[section]
\title{Sharp Stability of Global Compactness on the Heisenberg Group}
\author{Hua Chen}
\author{Yun-lu Fan}
\author{Xin Liao}
\address{School of Mathematics and Statistics, Wuhan University, Wuhan 430072, China}
\date{\today}
\begin{document}
\begin{frontmatter}
\begin{abstract}
In this paper, we establish a sharp stability inequality on the Heisenberg group $\mathbb{H}^n$  for functions $u\in \mathscr{D}^1$ that are close to
the sum of $m$   weakly interacting  Jerison-Lee bubbles. Specifically, we show that the following  sharp estimates hold:
\begin{equation*}
\inf\limits_{\lambda_i ,\xi_i}\|u-\sum_{i=1}^{m}\mathfrak{g}_{\lambda_i ,\xi_i } U\|_{\mathscr{D}^{1}}\lesssim 
\begin{cases}
\|\Delta_{\mathbb{H}^n}u+|u|^{\frac{2}{n}}u \|_{\mathscr{D}^{-1}}, & \mbox{if } n=1 ,\\
\|\Delta_{\mathbb{H}^n}u+|u|^{\frac{2}{n}}u \|_{\mathscr{D}^{-1}}|\log  \|\Delta_{\mathbb{H}^n}u+|u|^{\frac{2}{n}}u \|_{\mathscr{D}^{-1}}|^{\frac{1}{2}}, & \mbox{if } n=2 ,\\
\|\Delta_{\mathbb{H}^n}u+|u|^{\frac{2}{n}}u \|_{\mathscr{D}^{-1}}^{\frac{n+2}{2n}}, & \mbox{if } n\geq 3,
\end{cases}
\end{equation*}
where the infimum is taken over all parameters $\{\lambda_i, \xi_i\}_{i=1}^m$ and $\inf\limits_{\lambda_i ,\xi_i}\|u-\sum_{i=1}^{m}\mathfrak{g}_{\lambda_i ,\xi_i } U\|_{\mathscr{D}^{1}}$ denotes the distance from $u$ to the manifold of sums of $m$ Jerison-Lee bubbles in the Folland-Stein-Sobolev space $\mathscr{D}^1$. As a consequence, we obtain a sharp  quantitative stability of global compactness for non-negative functions on Heisenberg group.
\end{abstract}
\begin{keyword}
Quantitative stability; CR Yamabe equation;    Heisenberg group
\MSC[2020] 35B33, 35B35, 35H20, 35J70
\end{keyword}
\end{frontmatter}
\section{Introduction}

$\mathbf{Notations}$:
Throughout the paper, the symbol $C$ will denote a positive constant, which may vary from line to line and may depend on  the number of bubbles and the space dimension. The term $o_{R}(1)$  represents an infinitesimal quantity with respect to $R$ as $R\to+\infty$. Furthermore, we define  $o_{R}(f)=o_{R}(1)f$.
We say  $g=O(f)$ if $|g|\leq C|f|$ for some constant $C$.
The notation
$X\lesssim Y$ means that there exists a positive constant $C$ such that $X\leq CY$. Similarly, $X\gtrsim Y$  means that there exists a positive constant
$C$ such that  $X\geq CY$.
We say $X\approx Y$  if both $X \lesssim Y$ and $X\gtrsim Y$ hold.
Additionally, $u^{+}$ and $u^{-}$ denote the positive and negative parts of $u$, respectively.
The symbol $Q:=2n+2$ denotes the homogeneous dimension of the Heisenberg group $\mathbb{H}^{n}$, and we set $p=\frac{Q+2}{Q-2}$.
\subsection{Motivation}
In recent years, considerable attention has been directed toward the quantitative stability of various functional inequalities and PDEs (cf. \cite{Bonforte, Ciraolo, FigalliNeumayer, Frank, FuscoMaggiPratelli, LiuZhangZou, Wei2022}).
To illustrate the concept, we begin with the Sobolev inequality:
 for $n\geq 3$ and $u\in \dot{H}^{1}(\mathbb{R}^n)$,
\[\bar{S}\|u\|_{L^{2^*}}^2\leq \|\nabla u\|_{L^2}^2,\]
where $2^*=\frac{2n}{n-2}$ and $\bar{S}>0$   is the optimal constant.
The extremal functions  are given by the Aubin–Talenti bubbles (cf. \cite{Aubin, Talenti}):
$$\bar{U}[\lambda, y](x)=(n(n-2))^{{\frac{n-2}{4}}}(\frac{\lambda}{1+\lambda^2|x-y|^2})^{\frac{n-2}{2}}.$$
These bubbles are precisely all the positive solutions to the classical Yamabe equation on $\mathbb{R}^n$:
\begin{equation}\label{yamabe}
-\Delta u=|u|^{2^*-2}u~~\mbox{in }  \mathbb{R}^n.
\end{equation}
Bianchi and Egnell \cite{Bianchi} established the quantitative stability of the Sobolev inequality: 
\begin{equation}\label{1.2}
 \inf\limits_{y,\lambda ,c}\|\nabla( u- c\bar{U}[\lambda, y])\|_{L^2} \leq C(n)  (\|\nabla u\|_{L^2}^2 -\bar{S}\|u\|_{L^{2^*}}^2),\quad \forall u\in \dot{H}^{1}(\mathbb{R}^n).
\end{equation}
 For further extensions, see \cite{FigalliZhang, LiuZhangZou} and the references therein.

A more delicate and challenging question is the stability of the Yamabe equation \eqref{yamabe} itself:
 whether a function  that almost solves \eqref{yamabe}  must be quantitatively close to
Aubin–Talenti bubbles?
Struwe's celebrated global compactness result (cf. \cite{Struwe}) addresses this question in a qualitative way:
 \begin{proposition}
  Let $\{u_k\} \subset \dot{H}^{1}(\mathbb{R}^n)$ be a sequence of nonnegative functions  with $$(m-\frac{1}{2})\bar{S}^{\frac{n}{2}}\leq \|u_k\|_{ \dot{H}^{1}}
  \leq (m+\frac{1}{2})\bar{S}^{\frac{n}{2}} $$  for some positive integer $m$, and
  \[ \|\Delta u_k+|u_k|^{2^*-2}u_k   \|_{\dot{H}^{-1}} \to 0 ~\mbox{as } k\to+\infty. \]
  Then there exists sequence of parameters $\{\lambda_i^{(k)}, y_i^{(k)}\}$ such that
  \[ \|u_k-\sum_{i=1}^{m} \bar{U}[\lambda_i^{(k)}, y_i^{(k)}] \|_{\dot{H}^{1}} \to 0 ~\mbox{as } k\to+\infty. \]
  Furthermore, for $i\neq j$, we have
  \[ \bar{\varepsilon}_{ij}^{(k)}:=\min \{ \frac{\lambda_i^{(k)}}{\lambda_j^{(k)}},  \frac{\lambda_j^{(k)}}{\lambda_i^{(k)}}, \frac{1}{\lambda_i^{(k)}\lambda_j^{(k)} |y_i^{(k)}-y_j^{(k)}|^2} \} \to 0 ~\mbox{as } k\to+\infty.   \]
\end{proposition}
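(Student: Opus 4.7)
My plan is to follow the concentration-compactness/profile-extraction scheme introduced by Struwe, reducing everything to the rigidity of nonnegative solutions of \eqref{yamabe} supplied by the Caffarelli--Gidas--Spruck classification. Since $\|u_k\|_{\dot H^1}$ is uniformly bounded, after passing to a subsequence I would obtain weak convergence in $\dot H^1$, strong convergence in $L^q_{\mathrm{loc}}$ for $q<2^*$, and a.e.\ convergence, to some nonnegative limit $u_\infty$. Testing the almost-solvability assumption $\|\Delta u_k+u_k^{2^*-1}\|_{\dot H^{-1}}\to 0$ against $C_c^\infty$ functions and using a Vitali/dominated-convergence argument, $u_\infty$ solves \eqref{yamabe}; the strong maximum principle then gives $u_\infty\equiv 0$ or $u_\infty>0$, and in the latter case Caffarelli--Gidas--Spruck identifies $u_\infty=\bar U[\lambda_0,y_0]$ for fixed parameters.

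Next I consider $w_k^{(1)}:=u_k-u_\infty$, which tends weakly to $0$ in $\dot H^1$. A Brezis--Lieb argument provides $\|w_k^{(1)}\|_{\dot H^1}^2=\|u_k\|_{\dot H^1}^2-\|u_\infty\|_{\dot H^1}^2+o(1)$ and an analogous splitting of the critical nonlinearity, whence $w_k^{(1)}$ is itself an almost-solution in $\dot H^{-1}$. If $w_k^{(1)}\to 0$ strongly we are done; otherwise I locate a concentration scale and center $(\lambda_k^{(1)},y_k^{(1)})$ via a L\'evy-type choice, e.g.\
\begin{equation*}
\int_{B_{1/\lambda_k^{(1)}}(y_k^{(1)})} (w_k^{(1)})^{2^*}\,dx=\sup_{y\in\mathbb{R}^n}\int_{B_{1/\lambda_k^{(1)}}(y)} (w_k^{(1)})^{2^*}\,dx=\tfrac{1}{2}\bar S^{n/2},
\end{equation*}
and pass to the rescaling $\tilde w_k(x):=(\lambda_k^{(1)})^{-(n-2)/2}w_k^{(1)}(y_k^{(1)}+x/\lambda_k^{(1)})$, which is scale-invariant in both $\dot H^1$ and in the equation. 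Applying step one to $\tilde w_k$ yields a nontrivial nonnegative Yamabe solution as weak limit, i.e.\ another bubble, which I may normalize to $\bar U[1,0]$ by absorbing its parameters into $(\lambda_k^{(1)},y_k^{(1)})$.

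I then iterate: set $w_k^{(i+1)}:=w_k^{(i)}-\bar U[\lambda_k^{(i)},y_k^{(i)}]$ and repeat the concentration/rescaling step. Each extraction decreases $\|u_k\|_{\dot H^1}^2$ by $\bar S^{n/2}+o(1)$, since $\bar S^{n/2}$ is the infimal Dirichlet energy of a nontrivial nonnegative solution of \eqref{yamabe} (Pohozaev combined with the sharp Sobolev inequality). Together with the hypothesis $(m-\tfrac12)\bar S^{n/2}\le \|u_k\|_{\dot H^1}\le (m+\tfrac12)\bar S^{n/2}$, this forces the iteration to terminate after exactly $m$ bubbles have been extracted, the residual $u_\infty$ being counted as a bubble with stationary parameters if $u_\infty\not\equiv 0$.

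The main obstacle is the weak-interaction claim $\bar\varepsilon_{ij}^{(k)}\to 0$, which is precisely what legitimizes the Brezis--Lieb splitting for sums of bubbles used in the iteration, making the argument somewhat self-referential; the cleanest way to handle it is to build orthogonality into the construction. Concretely, if some pair of parameter sequences had $\bar\varepsilon_{ij}^{(k)}$ bounded away from zero, then after rescaling about $(\lambda_j^{(k)},y_j^{(k)})$ the bubble $\bar U[\lambda_i^{(k)},y_i^{(k)}]$ would converge, along a subsequence, to a nontrivial Aubin--Talenti profile, contradicting the fact that $w_k^{(j+1)}$ by construction no longer exhibits concentration at any previously detected scale-center pair. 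Making this rigorous depends on uniform pointwise decay estimates for $\bar U[\lambda,y]$ and careful control of the cross integrals $\int \bar U[\lambda_i^{(k)},y_i^{(k)}]\,\bar U[\lambda_j^{(k)},y_j^{(k)}]^{2^*-1}\,dx$, which is the technical core of the proof and where I expect to spend most of the effort.
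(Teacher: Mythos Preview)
The paper does not prove this proposition at all: it is stated as Struwe's classical global compactness result and simply attributed to \cite{Struwe}, with no argument given. Your sketch is essentially the standard Struwe profile-decomposition argument (weak limit is a nonnegative Yamabe solution, hence a bubble by Caffarelli--Gidas--Spruck; subtract, rescale to a new concentration point, repeat; energy quantization terminates the process after exactly $m$ steps; weak interaction follows because consecutive remainders converge weakly to zero after rescaling to any previously extracted bubble), so there is nothing to compare---you are reproducing what the paper takes as background.

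One small remark on your final paragraph: the weak-interaction statement $\bar\varepsilon_{ij}^{(k)}\to 0$ is not really ``self-referential'' as you worry. It is a direct consequence of the construction: after rescaling to the $j$-th profile, the remainder $w_k^{(j)}$ converges weakly to $\bar U[1,0]$, and subtracting this gives a sequence converging weakly to zero; if a later bubble failed to separate from the $j$-th one in the $\bar\varepsilon$ sense, its rescaled version would have a nontrivial weak limit, contradicting weak convergence to zero. This does not require the pointwise cross-integral estimates you anticipate---those are needed for the quantitative refinements (Theorem~\ref{thm1-2}), not for the qualitative Struwe decomposition.
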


Motivated by this, it is natural to investigate and compare the asymptotic rates of convergence of the above quantities as they tend to zero.
\begin{definition}
Let $\{\bar{U}[\lambda_i, y_i]\}_{i=1}^{m}$
 be a finite collection of Aubin–Talenti bubbles.
 For each pair $i\neq j$, define the interaction parameter
$$ \bar{\varepsilon}_{ij}:=\min \{ \frac{\lambda_i}{\lambda_j},  \frac{\lambda_j}{\lambda_i}, \frac{1}{\lambda_i\lambda_j |y_i-y_j|^2} \}.$$
 We say these bubbles are  $\delta$-weakly interacting if $$\bar{\varepsilon}:=\max_{i\neq j}\{\bar{\varepsilon}_{ij}\}\leq \delta.$$
 \end{definition}

Figalli and Glaudo \cite{Figalli2020} initiated the study of quantitative stability for the Yamabe equation near finite sums of weakly interacting Talenti bubbles, establishing sharp estimates in dimensions $3\leq n \leq 5$.
Later, Wei, Deng, and Sun \cite{Wei} extended these results by proving sharp quantitative stability estimates for $n\geq 6$.

We summarize their results (in \cite{Wei,Figalli2020}) as follows:

 \begin{theorem}
 Let $m\in \mathbb{N}$. There exists constant $\widetilde{\delta}=\widetilde{\delta}(n, m)$ such that for any $\delta \in (0, \widetilde{\delta})$, the following holds:
Assume $u\in \dot{H}^{1}(\mathbb{R}^n)$ satisfies
\[\|u-\sum_{i=1}^{m}\bar{U}[\widetilde{\lambda_i},\widetilde{y_i}] \|_{\dot{H}^{1}} <\delta,\]
for some collection of  $\delta$-weakly interaction  bubbles  $\{\bar{U}[\widetilde{\lambda_i},\widetilde{y_i}]\}_{i=1}^{m}$.
Then there exists another family of bubbles $\{\bar{U}[\lambda_i, y_i]\}_{i=1}^{m}$
  such that the following sharp stability estimate holds
\begin{equation*}
\|u-\sum_{i=1}^{m}\bar{U}[\lambda_i, y_i] \|_{\dot{H}^{1}} \lesssim
\begin{cases}
 \Gamma(u), & \mbox{if } 3\leq n\leq 5, \\
  \Gamma(u)|\log \Gamma(u) |^{\frac{1}{2}}, & \mbox{if } n=6, \\
  \Gamma(u)^{\frac{n+2}{2(n-2)}}, & \mbox{if } n\geq 7,
\end{cases}
\end{equation*}
where
$\Gamma(u):=\|\Delta u+|u|^{2^*-2}u\|_{\dot{H}^{-1}}.$ Moreover,   it holds that
 $\bar{\varepsilon}^{\frac{n-2}{2}} \lesssim \Gamma(u).$
\end{theorem}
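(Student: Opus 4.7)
My plan is to follow the strategy pioneered in the cited works, proceeding in three coupled stages.

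\emph{Stage 1: optimal selection and coercivity.} Starting from the $\delta$-close configuration $\sum_i\bar{U}[\widetilde\lambda_i,\widetilde y_i]$, I would solve the finite-dimensional minimization problem $\inf_{(\lambda_i,y_i)}\|u-\sum_i\bar{U}[\lambda_i,y_i]\|_{\dot{H}^1}$ via an implicit function argument to produce optimal, still $\widetilde\delta$-weakly interacting, parameters $(\lambda_i,y_i)$. Writing $\sigma=\sum_i\bar{U}[\lambda_i,y_i]$ and $\rho=u-\sigma$, the first-order optimality conditions force the $\dot{H}^1$-orthogonality of $\rho$ against the $(n+1)m$ kernel directions $\partial_{\lambda_i}\bar U_i$ and $\nabla_{y_i}\bar U_i$. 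A localized perturbation of Rey's single-bubble non-degeneracy theorem then upgrades this into the uniform coercivity
$$\|\rho\|_{\dot{H}^1}^2 \;\lesssim\; Q(\rho)\;:=\;\|\nabla\rho\|_{L^2}^2-p\int\sigma^{p-1}\rho^2,$$
with $p=2^*-1$.

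\emph{Stage 2: two coupled inequalities.} Writing the residual $f=-\Delta u-|u|^{p-1}u$ so that $\|f\|_{\dot{H}^{-1}}=\Gamma(u)$, testing against $\rho$, using $-\Delta\bar U_i=\bar U_i^p$, and Taylor-expanding $|u|^{p-1}u$ around $\sigma$ yields
$$Q(\rho)\;\lesssim\;\Gamma(u)\,\|\rho\|_{\dot{H}^1} + \Bigl|\int\bigl(\sigma^p-\textstyle\sum_i\bar U_i^p\bigr)\rho\Bigr| + \mathcal R(\rho),$$
with $\mathcal R(\rho)$ the super-linear remainder. The cross-term is handled by the off-diagonal estimates $\int\bar U_i^{p-1}\bar U_j\approx\bar\varepsilon_{ij}$ (with a logarithm when $n=6$), and is hence bounded by $\bar\varepsilon^{(n-2)/2}\|\rho\|_{\dot{H}^1}$. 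A complementary inequality is obtained by testing the equation against the kernel directions themselves; since these annihilate the leading quadratic of $Q$ and the bulk of each bubble self-action, only the integrated interaction survives and one deduces $\bar\varepsilon^{(n-2)/2}\lesssim\Gamma(u)$ modulo cubic-in-$\rho$ remainders.

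\emph{Stage 3: balancing the regimes.} Inserting $\bar\varepsilon^{(n-2)/2}\lesssim\Gamma(u)$ back into the first inequality and weighing the three competing contributions — the linear term $\Gamma(u)\|\rho\|_{\dot{H}^1}$, the interaction-driven term $\bar\varepsilon^{(n-2)/2}\|\rho\|_{\dot{H}^1}$, and the super-linear remainder $\mathcal R(\rho)$ of order $\|\rho\|_{\dot{H}^1}^{\min(2,p)}$ (with a logarithm on the borderline) — isolates the three regimes: for $3\leq n\leq 5$ the linear term dominates and gives $\|\rho\|\lesssim\Gamma(u)$; in the borderline dimension $n=6$ a marginally divergent bubble integral injects the $|\log\Gamma(u)|^{1/2}$ factor; and for $n\geq 7$, where $p-1=4/(n-2)<1$ makes the nonlinearity non-Lipschitz, the interaction term dominates and produces the sublinear rate $\Gamma(u)^{(n+2)/(2(n-2))}$. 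The main technical difficulty I expect is Stage 2 — carrying out a sharp enough Taylor expansion of $|u|^{p-1}u$ around $\sigma$ with the correct pointwise control of $\sigma^{p-1}$ in the transition regions between bubbles, and evaluating the off-diagonal interaction integrals sharply enough; this is most delicate at $n=6$, where the logarithmic correction emerges, and in high dimensions, where the nonlinearity is only Hölder continuous.
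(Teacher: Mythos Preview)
Your three-stage outline is broadly the right architecture and matches how the paper proves its Heisenberg analogue (Theorem~\ref{thm1-2}); note that the Euclidean statement you are asked about is only \emph{cited} in the paper from \cite{Wei,Figalli2020}, not reproved. Stage~1 (optimal selection, orthogonality, coercivity on the orthogonal complement) and the idea of testing the residual equation against both $\rho$ and the kernel directions are exactly what the paper does in Section~3.

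The genuine gap is in Stage~2, and it propagates into Stage~3. You claim the cross term $\bigl|\int(\sigma^p-\sum_i\bar U_i^p)\,\rho\bigr|$ is bounded by $\bar\varepsilon^{(n-2)/2}\|\rho\|_{\dot H^1}$. This is false for $n\geq 6$: writing $f=\sigma^p-\sum_i\bar U_i^p$, the sharp size of $\|f\|_{\dot H^{-1}}$ is $\bar\varepsilon^{(n+2)/4}$ for $n\geq 7$ and $\bar\varepsilon^{2}|\log\bar\varepsilon|^{1/2}$ for $n=6$ (the Euclidean counterpart of Lemma~\ref{fh-1} and Lemma~\ref{app3}), and by Lemma~\ref{sharpes} this is \emph{not} improvable. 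Since $(n+2)/4<(n-2)/2$ for $n>6$, one has $\bar\varepsilon^{(n+2)/4}\gg\bar\varepsilon^{(n-2)/2}$, so your claimed bound is too strong. Indeed, if it held, your own balancing in Stage~3 would collapse to $\|\rho\|\lesssim\Gamma(u)$ in every dimension, contradicting the sharpness of the theorem for $n\geq 6$. Your sentence ``the interaction term dominates and produces the sublinear rate'' is therefore not supported by the estimates you wrote down: after substituting $\bar\varepsilon^{(n-2)/2}\lesssim\Gamma(u)$, your interaction term becomes $\lesssim\Gamma(u)\|\rho\|$, identical to the linear term.

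The fix is precisely the dimension-dependent estimate of $\|f\|_{\dot H^{-1}}$: the paper's argument (adapted to $\mathbb{R}^n$) gives
\[
\|\rho\|_{\dot H^1}\ \lesssim\ \|f\|_{\dot H^{-1}}+\Gamma(u)\ \lesssim\ \bar\varepsilon^{(n+2)/4}+\Gamma(u)\qquad(n\geq 7),
\]
while testing against the scaling kernel direction yields $\bar\varepsilon^{(n-2)/2}\lesssim\Gamma(u)+\|\rho\|_{\dot H^1}^2$. Feeding the first into the second gives $\bar\varepsilon^{(n-2)/2}\lesssim\Gamma(u)$, hence $\bar\varepsilon^{(n+2)/4}\lesssim\Gamma(u)^{(n+2)/(2(n-2))}$, and the sublinear rate follows. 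The logarithm at $n=6$ enters through $\|f\|_{\dot H^{-1}}$, not through the off-diagonal integral $\int\bar U_i^{p}\bar U_j$ you invoke. In short: the missing ingredient is a sharp $\dot H^{-1}$ (equivalently $L^{2n/(n+2)}$) estimate on $\sigma^p-\sum_i\bar U_i^p$, which requires a careful pointwise analysis of $(a+b)^p-a^p-b^p$ in the regime $1<p\leq 2$ across the transition regions --- this is the heart of Lemma~\ref{app3}/Lemma~\ref{app4} (or \cite[Appendix]{Wei}), and it is not captured by the single interaction integral you cite.
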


\subsection{Our results}
A similar framework can be formulated on the Heisenberg group $\mathbb{H}^n\simeq \mathbb{C}^{n}\times\mathbb{R} \simeq \mathbb{R}^{2n+1}$.  We represent a point $\xi\in \mathbb{H}^n$ by
\[\xi:=(z,t)=(x+iy,t)\simeq (x,y,t)= (x_1,\ldots,x_n,y_1,\ldots, y_n, t)\in \mathbb{R}^{2n+1}. \]
For  $\xi,\xi'\in \mathbb{H}^n$, the group operation (left translation) is defined by
\[ \tau_{\xi'}(\xi):=\xi'\circ \xi := (z+z', t+t'+2\text{Im}(z'\cdot \bar{z} ) ). \]
The family of non-isotropic dilations $\{\delta_{\mu}\}_{\mu>0}$  is given by
\begin{equation*}
  \xi\longmapsto \delta_{\mu}(\xi):=(\mu x,\mu y, \mu^2t ).
\end{equation*}
The homogeneous dimension with respect to  $\{\delta_{\mu}\}_{\mu>0}$ is $Q=2n+2$.
A homogeneous norm on $\mathbb{H}^n$ is  defined by
\begin{equation}\label{eq:heisenberg-norm}
|\xi|= \left(|z|^4 + t^2\right)^{\frac{1}{4}},~ |z|^2=\sum_{j=1}^{n}(x_j^2+y_j^2).
\end{equation}
The triangle inequality associated with the norm $|\cdot|$ and left translation is discussed in Cygan \cite{Cygan1981}. The norm  \eqref{eq:heisenberg-norm} defines a metric $d$ given by $$d(\xi, \xi') =|{\xi'}^{-1}\circ \xi|. $$
 We denote by $B_{r}(\xi_0)$ the ball of radius
$r$ centered at  $\xi_0$ with respect to $d$, and write $B_{r}=B_{r}(0)$ when the center is the origin.
One can check that 
\begin{equation*}
  |B_r(\xi)|=r^Q|B_1|,~~~~~~r>0,
\end{equation*}
 since the Lebesgue measure $\, d\xi$ is a Haar measure on $\mathbb{H}^n$.

 The vector fields
\begin{equation*}
X_{j}=\frac{\partial}{\partial x_j}+2y_j\frac{\partial}{\partial t},~
X_{j+n}=\frac{\partial}{\partial y_j}-2x_j\frac{\partial}{\partial t},~ j=1,\ldots, n
\end{equation*}
generate the real Lie algebra of left-invariant vector fields on $\mathbb{H}^n$.
The Heisenberg Laplacian is defined as
\begin{equation*}
\Delta_{\mathbb{H}^n} :=-  \sum_{i=1}^{2n} X_i^*X_i
=\sum_{i=1}^{2n} X_i^2.
\end{equation*}
For function $u$, we set the horizontal gradient
\[Xu:= (X_1u,\ldots,X_nu, X_{n+1}u,\ldots,X_{2n} u),\]
and define the standard Folland-Stein-Sobolev space $\mathscr{D}^{1}$ as the completion of $C_0^{\infty}(\mathbb{H}^n)$ with respect to the norm $$\|u\|_{\mathscr{D}^{1}}:=\|Xu\|_{L^2}=(\sum_{i=1}^{2n}\|X_i u\|_{L^2}^2)^{\frac{1}{2}}.$$ The following Sobolev-type inequality  is well known to hold (cf. \cite{Jerison1988}):
\begin{equation*}\label{sob}
  S \|u\|^2_{L^{2^*_{Q}}}\leq \|X u\|^2_{L^2},~~~~\mbox{for any }u\in \mathscr{D}^{1},
\end{equation*}
 where        $2_Q^*=\frac{2Q}{Q-2}=\frac{2}{n}+2$ and $S$ is the optimal constant. The corresponding extremal functions are known as Jerison-Lee bubbles (see \cite{Jerison1988}):
       $$ \mathfrak{g}_{\lambda, \xi} U(\cdot):=  \lambda^{\frac{Q-2}{2}} U(\delta_{\lambda} ( \tau_{\xi^{-1}} (\cdot)) ),    $$
    where $$U(\xi)=\frac{c_0}{[(1+|z|^2)^2 +t^2]^{\frac{Q-2}{4}}} $$ and $c_0$ is a suitable constant such that
\begin{equation}\label{CRyamabe}
\Delta_{\mathbb{H}^n} U+|U|^{2^*_Q-2}U =\Delta_{\mathbb{H}^n} U+|U|^{\frac{2}{n}}U=0.
\end{equation}
Loiudice \cite{Loiudice}  had proved a version of \eqref{1.2} for $u\in \mathscr{D}^{1}$ on $\mathbb{H}^n$:
\[\inf\limits_{\xi,\lambda ,c}\|X( u- c\mathfrak{g}_{\lambda, \xi}U)\|_{L^2} \leq C(n)  (\|X u\|_{L^2}^2 -S\|u\|_{L^{2_Q^*}}^2). \]

We denote $\mathscr{D}^{-1}$  as the dual space of $\mathscr{D}^1$, with the dual pairing
$$(u,v)_{\mathscr{D}^1,\mathscr{D}^{-1}}:= \int_{\mathbb{R}^d} uv dx, ~~\forall u\in \mathscr{D}^1, v\in \mathscr{D}^{-1} .$$
Using the concentration–compactness principle (cf. Tintarev \cite{Tintarev2007}, Citti \cite{Citti2001}),   for  the CR Yamabe problem \eqref{CRyamabe}, we have the following global compactness result:
\begin{proposition}\label{prop1.2}
Let $\{u_k\} \subset \mathscr{D}^{1}$ be a sequence of nonnegative functions  such that
 $$(m-\frac{1}{2})S^{\frac{Q}{2}}\leq \|u_k\|_{ \mathscr{D}^{1}}
  \leq (m+\frac{1}{2})S^{\frac{Q}{2}} $$
    for some positive integer $m$, and
  \[ \|\Delta_{\mathbb{H}^n}u_k +|u_k |^{\frac{2}{n}}u_k   \|_{\mathscr{D}^{-1}} \to 0 ~\mbox{as } k\to+\infty. \]
  Then there exist parameters  $\{\lambda_i^{(k)}, \xi_i^{(k)}\}$ such that
  \[ \|u_k-\sum_{i=1}^{m} \mathfrak{g}_{\lambda_i^{(k)}, \xi_i^{(k)}}U\|_{\mathscr{D}^{1}} \to 0 ~\mbox{as } k\to+\infty. \]
  Moreover, for $i\neq j$,  we have
  \[ \varepsilon_{ij}^{(k)}:=\min \{ \frac{\lambda_i^{(k)}}{\lambda_j^{(k)}},  \frac{\lambda_j^{(k)}}{\lambda_i^{(k)}}, \frac{1}{\lambda_i^{(k)}\lambda_j^{(k)} |(\xi_i^{(k)})^{-1}\circ\xi_j^{(k)}|^2} \} \to 0 ~\mbox{as } k\to+\infty.   \]
\end{proposition}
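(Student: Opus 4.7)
The plan is to transplant Struwe's bubble-extraction scheme to the Heisenberg setting using the concentration-compactness results of Tintarev \cite{Tintarev2007} and Citti \cite{Citti2001}. Three Heisenberg analogues of Euclidean ingredients drive the argument: (i) the Jerison-Lee classification of positive entire solutions to \eqref{CRyamabe}, which forces every nontrivial nonnegative weak limit to be of the form $\mathfrak{g}_{\lambda,\xi}U$; (ii) left-invariance of $\Delta_{\mathbb{H}^n}$ under $\tau_{\xi}$ and dilation covariance of the $\mathscr{D}^{-1}$-residual $\Delta_{\mathbb{H}^n}u+|u|^{2/n}u$ under $\delta_{\lambda}$; (iii) a Brezis-Lieb splitting for the critical nonlinearity $|u|^{2^*_Q-2}u$ on $\mathbb{H}^n$, available from local $L^{2^*_Q}_{\mathrm{loc}}$-compactness of the Folland-Stein embedding.

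\textbf{First extraction.} The energy bound yields $u_k \rightharpoonup u^{(0)}$ along a subsequence in $\mathscr{D}^{1}$. Testing the residual against $C_0^{\infty}$ and passing to the limit using (ii) and the local $X$-Sobolev compactness gives $\Delta_{\mathbb{H}^n}u^{(0)}+|u^{(0)}|^{2/n}u^{(0)}=0$ with $u^{(0)}\geq 0$, so by (i) either $u^{(0)}\equiv 0$ or $u^{(0)}$ is already a bubble with $k$-independent parameters. In the trivial-limit case, the lower bound $(m-\tfrac12)S^{Q/2}$ and the Sobolev inequality prevent $\|u_k\|_{L^{2^*_Q}}\to 0$, so the $\mathbb{H}^n$ concentration-compactness principle delivers a scale $\lambda_k^{(1)}>0$ and a center $\xi_k^{(1)}\in\mathbb{H}^n$ for which the rescaled sequence $\tilde u_k(\eta):=(\lambda_k^{(1)})^{-(Q-2)/2}u_k(\tau_{\xi_k^{(1)}}\delta_{1/\lambda_k^{(1)}}(\eta))$ has a nontrivial weak limit in $\mathscr{D}^{1}$; by (ii) this limit again solves \eqref{CRyamabe} and so, by (i), is a bubble.

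\textbf{Iteration and termination.} Writing $B_k^{(1)}:=\mathfrak{g}_{\lambda_k^{(1)},\xi_k^{(1)}}U$ and $v_k^{(1)}:=u_k-B_k^{(1)}$, the Brezis-Lieb splitting (iii) together with weak convergence of $\tilde u_k$ and the fact that bubbles solve \eqref{CRyamabe} yield
\begin{equation*}
\|v_k^{(1)}\|_{\mathscr{D}^{1}}^2=\|u_k\|_{\mathscr{D}^{1}}^2-S^{Q/2}+o_k(1),\qquad \|\Delta_{\mathbb{H}^n}v_k^{(1)}+|v_k^{(1)}|^{2/n}v_k^{(1)}\|_{\mathscr{D}^{-1}}=o_k(1).
\end{equation*}
Thus $v_k^{(1)}$ is again an approximate nonnegative Palais-Smale sequence, with energy reduced by exactly one bubble quantum. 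Apply the first-extraction step to $v_k^{(1)}$ and iterate. The upper bound $(m+\tfrac12)S^{Q/2}$ forces termination after precisely $m$ extractions; after the last extraction the residual $v_k^{(m)}$ has $\mathscr{D}^{1}$-norm strictly below $S^{Q/2}$, and the Palais-Smale property (no nontrivial solutions of energy less than a bubble) then forces $v_k^{(m)}\to 0$ strongly in $\mathscr{D}^{1}$.

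\textbf{Weak interaction and main obstacle.} Suppose toward contradiction that for some pair $i\neq j$ and along a subsequence $\varepsilon_{ij}^{(k)}\not\to 0$; pre-composing with the group action associated to $(\lambda_i^{(k)},\xi_i^{(k)})$, the $i$th bubble stabilizes to $U$, while the rescaled $j$th bubble admits a nontrivial weak limit on the same normalized scale (by the definition of $\varepsilon_{ij}^{(k)}$), contradicting the fact that the $j$th bubble was extracted precisely as a weak limit of the residual after the $i$th had been subtracted and the $i$th's own weak limit at the $i$th scale had been declared to vanish. The main obstacle in the entire scheme is not conceptual but bookkeeping: verifying that each Euclidean tool has its exact $\mathbb{H}^n$ analogue respecting the non-commutative group law (in particular the Brezis-Lieb splitting for $|u|^{2^*_Q-2}u$ and the covariance of the residual under $\mathfrak{g}_{\lambda,\xi}$), and tracking the relative scales of successive bubbles through the induction so that the energy drop of $S^{Q/2}$ per step is clean and the orthogonality $\varepsilon_{ij}^{(k)}\to 0$ actually closes up.
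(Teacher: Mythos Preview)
The paper does not give its own proof of this proposition; it simply records it as a consequence of the concentration--compactness principle and cites Tintarev \cite{Tintarev2007} and Citti \cite{Citti2001}. Your sketch is precisely the standard Struwe-type extraction that those references encode, so your approach and the paper's are the same in spirit.

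One imprecision is worth flagging: the residual $v_k^{(1)}=u_k-B_k^{(1)}$ is \emph{not} nonnegative in general, so you cannot call it ``an approximate nonnegative Palais--Smale sequence'' and then invoke the Jerison--Lee classification directly on its weak limit. The correct observation is that in the Tintarev profile decomposition the new concentration scale $(\lambda_k^{(2)},\xi_k^{(2)})$ is extracted so that $\mathfrak{g}_{\lambda_k^{(2)},\xi_k^{(2)}}^{-1}\mathfrak{g}_{\lambda_k^{(1)},\xi_k^{(1)}}\rightharpoonup 0$ automatically; hence the rescaled first bubble vanishes weakly at the second scale, and the second weak limit coincides with the weak limit of the rescaled \emph{original} $u_k$, which is nonnegative. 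This is also what makes your final ``weak interaction'' paragraph work cleanly: the mutual asymptotic orthogonality $\varepsilon_{ij}^{(k)}\to 0$ is built into the extraction (cf.\ the paper's Lemma~\ref{gg} and the remark following it), rather than being deduced afterward by contradiction. With that adjustment your argument is complete.
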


Similarly, we can define the  interaction parameters in Heisenberg group settings.
\begin{definition}
Let $\{\mathfrak{g}_{\lambda_i, \xi_i}U\}_{i=1}^{m}$ be
 a finite collection of  Jerison-Lee bubbles.
For $i\neq j$,
 define
$$ \varepsilon_{ij}:=\min \{ \frac{\lambda_i}{\lambda_j},  \frac{\lambda_j}{\lambda_i}, \frac{1}{\lambda_i\lambda_j |\xi_i^{-1}\circ \xi_j|^2} \}.$$
 We say these bubbles are  $\delta$-weakly interacting if $$\varepsilon:=\max_{i\neq j}\{\varepsilon_{ij}\}\leq \delta.$$
 \end{definition}

Now, we present our main result as follows
\begin{theorem}\label{thm1-2}
For each $m\in \mathbb{N}$. There exists constant $\widetilde{\delta}=\widetilde{\delta}(n, m)$ such that for any $\delta \in (0, \widetilde{\delta})$, the following holds:
   if $u\in \mathscr{D}^{1}$ satisfies  \[\|u-\sum_{i=1}^{m}\mathfrak{g}_{\tilde{\lambda_i },\tilde{\xi_i }} U \|_{\mathscr{D}^{1}} <\delta,\]
   for a family of $\delta$-weakly interacting Jerison-Lee bubbles $\{\mathfrak{g}_{\tilde{\lambda_i} , \tilde{\xi_i }} U\}_{i=1}^{m}$ .
Then there exist  Jersion-Lee bubbles $\{\mathfrak{g}_{\lambda_i ,\xi_i } U\}_{i=1}^m $ such that
\begin{equation*}
\|u-\sum_{i=1}^{m}\mathfrak{g}_{\lambda_i ,\xi_i } U\|_{\mathscr{D}^{1}} \lesssim
\begin{cases}
\|\Delta_{\mathbb{H}^n}u+|u|^{\frac{2}{n}}u \|_{\mathscr{D}^{-1}}, & \mbox{if } n=1 ,\\
\|\Delta_{\mathbb{H}^n}u+|u|^{\frac{2}{n}}u \|_{\mathscr{D}^{-1}}|\log  \|\Delta_{\mathbb{H}^n}u+|u|^{\frac{2}{n}}u \|_{\mathscr{D}^{-1}}|^{\frac{1}{2}}, & \mbox{if } n=2 ,\\
\|\Delta_{\mathbb{H}^n}u+|u|^{\frac{2}{n}}u \|_{\mathscr{D}^{-1}}^{\frac{n+2}{2n}}, & \mbox{if } n\geq 3,
\end{cases}
\end{equation*}
where $\frac{n+2}{2n}=\frac{Q+2}{2(Q-2)}$. Furthermore, 
$$\varepsilon^{n} \lesssim \|\Delta_{\mathbb{H}^n}u+|u|^{\frac{2}{n}}u \|_{\mathscr{D}^{-1}}.$$
\end{theorem}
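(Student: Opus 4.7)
The plan is to run the finite-dimensional Lyapunov--Schmidt reduction used by Figalli--Glaudo and Deng--Sun--Wei in the Euclidean Yamabe case, now transplanted to the sub-Riemannian geometry of $\mathbb{H}^n$. Since $u$ lies at distance less than $\delta$ from the bubble-sum manifold $\mathcal{M}:=\{\sum_{i=1}^{m}\mathfrak{g}_{\lambda_i,\xi_i}U\}$, for $\delta$ sufficiently small a minimization / implicit function theorem argument on $\mathcal{M}$ produces a minimizing choice of parameters $(\lambda_i,\xi_i)_{i=1}^m$, yielding the orthogonal decomposition $u=\sigma+\rho$ with $\sigma=\sum_{i=1}^{m}\mathfrak{g}_{\lambda_i,\xi_i}U$ and $\rho$ perpendicular in $\mathscr{D}^1$ to the $m(2n+2)$-dimensional tangent space spanned by the parameter derivatives $\{\partial_{\lambda_i}\sigma,\partial_{\xi_i^k}\sigma\}$. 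The first technical input I would need is the coercivity of the linearized CR Yamabe operator $-\Delta_{\mathbb{H}^n}-pU^{p-1}$ on the $\mathscr{D}^1$-orthogonal complement of its kernel (the kernel being the parameter-derivative span, by the non-degeneracy results for positive solutions of \eqref{CRyamabe}), then propagated to the weakly-interacting sum $\sigma$ by a localization argument that partitions $\mathbb{H}^n$ into Koranyi regions adapted to each scale $\lambda_i^{-1}$.

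I would then test the identity $\Delta_{\mathbb{H}^n}u+|u|^{2/n}u=f$ against $\rho$ itself. Using $\Delta_{\mathbb{H}^n}U_i+U_i^{p}=0$ for each $U_i:=\mathfrak{g}_{\lambda_i,\xi_i}U$ and Taylor-expanding $|u|^{p-1}u$ around $\sigma$, the coercivity should yield an estimate of the form
\begin{equation*}
\|\rho\|_{\mathscr{D}^1}^2\lesssim \|f\|_{\mathscr{D}^{-1}}\|\rho\|_{\mathscr{D}^1}+I(\sigma,\rho)+R(\sigma,\rho),
\end{equation*}
where $I(\sigma,\rho)$ collects the bubble-interaction contribution $\int(\sigma^{p}-\sum_i U_i^{p})\rho$ and $R(\sigma,\rho)$ collects the super-linear-in-$\rho$ remainder. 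The interaction integrals must be estimated sharply in the anisotropic Heisenberg setting, using the Koranyi decay $U(\xi)\sim |\xi|^{-(Q-2)}$ and convolution-type computations on $\mathbb{H}^n$, which should give $\int U_i^{p-1}U_j\sim\varepsilon_{ij}^{n}$ up to logarithmic corrections. Pairing $f$ with appropriately rescaled parameter-derivatives of one bubble evaluated near the concentration region of another also yields the matching lower bound $\varepsilon^n\lesssim\|f\|_{\mathscr{D}^{-1}}$.

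The final step splits by dimension, following the dichotomy $p-1\gtrless 1$. For $n=1$ one has $p-1=2$ and the nonlinearity is $C^2$, so both $I$ and $R$ are of quadratic or higher order in $(\varepsilon,\|\rho\|_{\mathscr{D}^1})$ and the linear rate $\|f\|_{\mathscr{D}^{-1}}$ follows by absorption. For $n\geq 2$ we have $p-1\leq 1$, and the remainder $R$ contributes only at the unfavourable order $\|\rho\|_{\mathscr{D}^1}^{p+1}$, which is smaller than $\|\rho\|_{\mathscr{D}^1}^2$ near zero; the interaction $I$ becomes the dominant error. Following the strategy of Deng--Sun--Wei, I would construct corrector functions $\phi_i$ that are $\mathscr{D}^1$-orthogonal to the kernel and solve the linearized equations with the leading interaction term as data, then replace the ansatz by $\sigma+\sum_i\phi_i$. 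The improved decomposition absorbs the dominant part of $I$ and produces the sharp exponent $(n+2)/(2n)$ for $n\geq 3$, with a logarithmic loss at the borderline $n=2$ where the relevant corrector integral is only marginally convergent.

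The main obstacles I anticipate are twofold. First, transferring the coercivity of the single-bubble linearized operator uniformly to sums of weakly-interacting bubbles on $\mathbb{H}^n$: the non-commutativity of the Heisenberg group law and the anisotropy of the Koranyi distance make the cut-off / localization argument more delicate than on $\mathbb{R}^n$, and one must carefully choose Koranyi annuli on which a single bubble dominates. Second, the sharp interaction analysis and the construction of the correctors $\phi_i$ for $n\geq 2$: these require precise pointwise control of the Green function of the linearized CR Yamabe operator in the sub-Riemannian geometry, where explicit formulas are far less transparent than in the Euclidean case and one has to work through careful rescaling and asymptotic arguments built from the horizontal vector fields $X_j$.
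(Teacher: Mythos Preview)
Your overall architecture (minimize over the bubble manifold to get $u=\sigma+\rho$ with $\rho$ orthogonal to the tangent directions, expand the equation as $-\Delta_{\mathbb{H}^n}\rho-p\sigma^{p-1}\rho=f+h+N(\rho)$, and combine coercivity with interaction estimates) matches the paper, but the two technical ingredients you flag as the main obstacles are precisely where the paper takes a different, and simpler, route.

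First, the paper does \emph{not} obtain the uniform coercivity of $-\Delta_{\mathbb{H}^n}-p\sigma^{p-1}$ on $\mathscr{F}^\perp$ by Koranyi cut-off localization. Instead it runs a short contradiction/compactness argument (Lemma~\ref{main2}): assuming a sequence of weakly interacting configurations with coercivity constant tending to zero, one pulls back by $(\mathfrak{g}_i^{(k)})^{-1}$, uses the compact embedding $\mathscr{D}^1\hookrightarrow L^2_{U^{p-1}}$, and invokes the single-bubble non-degeneracy to reach a contradiction. This completely sidesteps the anisotropy issues you anticipate.

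Second, and more importantly, the paper does \emph{not} build correctors $\phi_i$ \`a la Deng--Sun--Wei, and does not need any pointwise control of the Green function of the linearized CR Yamabe operator. The key observation is that one can bound the interaction term directly in the dual norm: $\|f\|_{\mathscr{D}^{-1}}\lesssim \varepsilon^{(Q+2)/4}$ for $n\geq 3$, $\varepsilon^2|\log\varepsilon|^{1/2}$ for $n=2$, and $\varepsilon$ for $n=1$ (Lemma~\ref{fh-1}). For $n\geq 3$ this is just $\|f\|_{L^{2Q/(Q+2)}}$; for the critical case $n=2$ one solves $-\Delta_{\mathbb{H}^2}\omega=UV$ via the explicit Folland--Stein fundamental solution and estimates the resulting double integral directly (Lemma~\ref{app4}). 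Once this is in hand, the coercivity gives $\|\rho\|_{\mathscr{D}^1}\lesssim \|f\|_{\mathscr{D}^{-1}}+\|h\|_{\mathscr{D}^{-1}}$, and pairing the equation with $Z_k^{2n+2}$ (Proposition~\ref{main1}) gives $\varepsilon^{(Q-2)/2}\lesssim \|\rho\|_{\mathscr{D}^1}^2+\|h\|_{\mathscr{D}^{-1}}$; the two together close the loop without any corrector construction. So your proposed route is in principle viable, but the pointwise Green-function analysis you worry about is exactly what the paper manages to avoid (and Remark~\ref{fh-1}'s preceding remark explicitly flags that such pointwise estimates ``may fail on the Heisenberg group'').
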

\begin{remark}
In contrast to \cite{Wei},  instead of relying on intricate pointwise estimates that may fail on the Heisenberg group,  we obtain a more natural energy estimate in Lemma  \ref{main2} by making full use of the concentration-compactness method and the nondegeneracy of the solution. In addition
we simplify the estimations by making use of the symmetry properties discussed in Section 2.2. 
\end{remark}
As a direct consequence of Theorem \ref{thm1-2} and Proposition \ref{prop1.2}, we have the following quantitative stability of global compactness for non-negative functions on $\mathbb{H}^n$.
\begin{corollary}\label{cor1-1}
Let $u \in \mathscr{D}^{1}$ be a sequence of nonnegative functions such that $$(m-\frac{1}{2})S^{\frac{Q}{2}}\leq \|u\|_{ \mathscr{D}^{1}}
  \leq (m+\frac{1}{2})S^{\frac{Q}{2}}. $$
  Then there exist  Jersion-Lee bubbles $\{\mathfrak{g}_{\lambda_i , \xi_i } U\}_{i=1}^{m}$ such that
 \begin{equation*}
\|u-\sum_{i=1}^{m}\mathfrak{g}_{\lambda_i ,\xi_i } U\|_{\mathscr{D}^{1}} \lesssim
\begin{cases}
\|\Delta_{\mathbb{H}^n}u+|u|^{\frac{2}{n}}u \|_{\mathscr{D}^{-1}}, & \mbox{if } n=1 ,\\
\|\Delta_{\mathbb{H}^n}u+|u|^{\frac{2}{n}}u \|_{\mathscr{D}^{-1}}|\log  \|\Delta_{\mathbb{H}^n}u+|u|^{\frac{2}{n}}u \|_{\mathscr{D}^{-1}}|^{\frac{1}{2}}, & \mbox{if } n=2 ,\\
\|\Delta_{\mathbb{H}^n}u+|u|^{\frac{2}{n}}u \|_{\mathscr{D}^{-1}}^{\frac{n+2}{2n}}, & \mbox{if } n\geq 3.
\end{cases}
\end{equation*}
\end{corollary}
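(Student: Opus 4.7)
\textbf{Proof proposal for Corollary \ref{cor1-1}.}
The plan is to deduce the corollary from Theorem \ref{thm1-2} and Proposition \ref{prop1.2} via a compactness / contradiction argument, by reducing to the regime where $u$ is automatically close to a sum of $m$ weakly interacting Jerison-Lee bubbles.

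First I would dispose of the easy case. Set $\Gamma(u):=\|\Delta_{\mathbb{H}^n}u+|u|^{\frac{2}{n}}u\|_{\mathscr{D}^{-1}}$. Since $\|u\|_{\mathscr{D}^1}\leq (m+\tfrac12)S^{\frac{Q}{2}}$, for any bubbles $\mathfrak{g}_{\lambda_i,\xi_i}U$ the quantity $\inf_{\lambda_i,\xi_i}\|u-\sum_{i=1}^m\mathfrak{g}_{\lambda_i,\xi_i}U\|_{\mathscr{D}^1}$ is bounded a priori by a constant $M=M(n,m)$ (use the triangle inequality and the fact that each bubble has $\mathscr{D}^1$-norm $S^{Q/2}$). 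Consequently, whenever $\Gamma(u)\geq \eta_0$ for a fixed constant $\eta_0>0$ (to be chosen small), the desired inequality is automatic with constant $C=C(n,m,\eta_0)$. It therefore suffices to establish the bound under the extra assumption $\Gamma(u)\leq \eta_0$ for $\eta_0$ arbitrarily small.

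Next I would argue by contradiction in this remaining regime. Suppose the conclusion fails; then one can extract a sequence $\{u_k\}\subset\mathscr{D}^1$ of non-negative functions satisfying the norm band $(m-\tfrac12)S^{\frac{Q}{2}}\leq\|u_k\|_{\mathscr{D}^1}\leq (m+\tfrac12)S^{\frac{Q}{2}}$ with $\Gamma(u_k)\to 0$, yet such that the ratio of $\inf_{\lambda_i,\xi_i}\|u_k-\sum_{i=1}^m\mathfrak{g}_{\lambda_i,\xi_i}U\|_{\mathscr{D}^1}$ to the appropriate right-hand side (with the correct $n$-dependent power and possible logarithmic factor) tends to $+\infty$. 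Proposition \ref{prop1.2} applies to this sequence, yielding parameters $\{\widetilde{\lambda_i}^{(k)},\widetilde{\xi_i}^{(k)}\}$ with
\[
\Big\|u_k-\sum_{i=1}^{m}\mathfrak{g}_{\widetilde{\lambda_i}^{(k)},\widetilde{\xi_i}^{(k)}}U\Big\|_{\mathscr{D}^1}\longrightarrow 0,\qquad \max_{i\neq j}\varepsilon_{ij}^{(k)}\longrightarrow 0,
\]
as $k\to\infty$. In particular, for every $\delta>0$ and every $k$ sufficiently large, the hypotheses of Theorem \ref{thm1-2} are satisfied with the $\delta$-weakly interacting family $\{\mathfrak{g}_{\widetilde{\lambda_i}^{(k)},\widetilde{\xi_i}^{(k)}}U\}_{i=1}^m$.

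Finally, apply Theorem \ref{thm1-2} for each sufficiently large $k$ with $\delta<\widetilde{\delta}(n,m)$. This produces bubbles $\{\mathfrak{g}_{\lambda_i^{(k)},\xi_i^{(k)}}U\}_{i=1}^m$ for which the stability estimate
\[
\Big\|u_k-\sum_{i=1}^{m}\mathfrak{g}_{\lambda_i^{(k)},\xi_i^{(k)}}U\Big\|_{\mathscr{D}^1}\lesssim \begin{cases}\Gamma(u_k),& n=1,\\ \Gamma(u_k)|\log\Gamma(u_k)|^{1/2},& n=2,\\ \Gamma(u_k)^{\frac{n+2}{2n}},& n\geq 3,\end{cases}
\]
holds with a constant depending only on $n$ and $m$. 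This contradicts the assumed blow-up of the ratio and closes the argument. I do not expect any serious obstacle here: the only subtlety is matching the regime $\Gamma(u)\leq\eta_0$ (secured in step one) with the smallness threshold $\widetilde{\delta}$ required by Theorem \ref{thm1-2}, which is handled automatically once the global compactness of Proposition \ref{prop1.2} converts $\Gamma\to 0$ into closeness to a weakly interacting bubble tree.
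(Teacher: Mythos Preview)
Your proposal is correct and follows essentially the same contradiction strategy as the paper: assume the estimate fails along a sequence, use Proposition \ref{prop1.2} to place each $u_k$ near a sum of $\delta_k$-weakly interacting bubbles with $\delta_k\to 0$, and then invoke Theorem \ref{thm1-2} to obtain the contradiction. Your write-up is in fact slightly more careful than the paper's, since you treat the trivial regime $\Gamma(u)\geq \eta_0$ explicitly and state the contradiction with the correct $n$-dependent right-hand side.
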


 Finally,  we construct examples to demonstrate the sharpness of Theorem \ref{thm1-2} and Corollary \ref{cor1-1} in the following sense:
 \begin{theorem}\label{thmsharp}
 For any $\delta>0$, there exist $\varepsilon\in (0, \delta)$
and a  nonnegative function $u$ such that
$$\|u(z,t)-U(z,t)-U(z,t+\frac{1}{\varepsilon})\|_{\mathscr{D}^{1}}\leq \delta,$$ but
  \begin{equation*}
 \inf_{\lambda_i, \xi_i}\| u-\sum_{i=1}^{2}\mathfrak{g}_{\lambda_i, \xi_i }U \|_{\mathscr{D}^1}\gtrsim
\begin{cases}
\|\Delta_{\mathbb{H}^n}u+|u|^{\frac{2}{n}}u \|_{\mathscr{D}^{-1}}|\log  \|\Delta_{\mathbb{H}^n}u+|u|^{\frac{2}{n}}u \|_{\mathscr{D}^{-1}}|^{\frac{1}{2}}, & \mbox{if } n=2 ,\\
\|\Delta_{\mathbb{H}^n}u+|u|^{\frac{2}{n}}u \|_{\mathscr{D}^{-1}}^{\frac{n+2}{2n}}, & \mbox{if } n\geq 3.
\end{cases}
\end{equation*}
\end{theorem}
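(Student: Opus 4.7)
The construction proceeds by perturbing a two-bubble sum $\sigma = U_1 + U_2$ with a correction $v$ orthogonal (at leading order) to the bubble manifold, then balancing the two scales. Fix $\xi_1 = 0$, $\xi_2 = (0, -\varepsilon^{-1})$ so that $U_1 := U$ and $U_2 := \mathfrak{g}_{1,\xi_2}U$ satisfy $u(z,t) - U_1 - U_2$ literally equal to $v(z,t)$, and a direct computation of the Heisenberg norm gives $\varepsilon_{12} = \varepsilon$. Write $T_\sigma \subset \mathscr{D}^1$ for the tangent space to the two-bubble manifold $\mathcal{M}$ at $\sigma$, spanned by the derivatives of $\mathfrak{g}_{\lambda_i, \xi_i}U$ with respect to $\lambda_i$ and $\xi_i$. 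The aim is to choose a nonnegative correction $v \in \mathscr{D}^1$ with $\|v\|_{\mathscr{D}^1} \leq \delta$ whose $T_\sigma$-orthogonal component has size of order $\varepsilon^{(n+2)/2}$ (with a matching logarithmic factor when $n = 2$), so that the distance from $u := \sigma + v$ to $\mathcal{M}$ is bounded below by the orthogonal component via an implicit-function argument that accounts for the second fundamental form of $\mathcal{M}$.

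For the equation residual, using $\Delta_{\mathbb{H}^n}U_i + U_i^p = 0$ I would expand
\begin{equation*}
\Delta_{\mathbb{H}^n}u + |u|^{p-1}u = (\sigma^p - U_1^p - U_2^p) + (\Delta_{\mathbb{H}^n} + p\sigma^{p-1})v + O(|v|^p).
\end{equation*}
The interaction term is estimated by a three-zone decomposition of $\mathbb{H}^n$ (near $\xi_1$, near $\xi_2$, and an intermediate zone), using the decay $U(\xi) \sim |\xi|^{-(Q-2)}$ and the separation $|\xi_1^{-1}\circ\xi_2|^2 \sim \varepsilon^{-1}$, yielding
\begin{equation*}
\|\sigma^p - U_1^p - U_2^p\|_{\mathscr{D}^{-1}} \asymp \begin{cases}\varepsilon^2 |\log\varepsilon|^{1/2}, & n = 2,\\ \varepsilon^n, & n \geq 3,\end{cases}
\end{equation*}
the $n = 2$ logarithm arising from the borderline integrability in the intermediate zone. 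The correction $v$ is engineered so that $(\Delta_{\mathbb{H}^n} + p\sigma^{p-1})v$ is of comparable or lower order in $\mathscr{D}^{-1}$ than the interaction defect; combined with the lower bound from the previous paragraph, this delivers $\mathrm{dist}(u, \mathcal{M}) \gtrsim \|\Delta_{\mathbb{H}^n}u + |u|^{2/n}u\|_{\mathscr{D}^{-1}}^{(n+2)/(2n)}$ (with log factor for $n = 2$).

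The main obstacle is threefold. First, the sharp Heisenberg-specific estimates for $\|\sigma^p - U_1^p - U_2^p\|_{\mathscr{D}^{-1}}$ require careful use of the Kor\'anyi-type norm $|\xi| = (|z|^4 + t^2)^{1/4}$ and the non-abelian translation $\tau_{\xi^{-1}}$, with the $n = 2$ borderline case hinging on a precise logarithmic tracking in the intermediate zone that has no simple Euclidean analog. Second, the correction $v$ must be tuned so that $(\Delta_{\mathbb{H}^n} + p\sigma^{p-1})v$ does not overwhelm the interaction term in $\mathscr{D}^{-1}$ while its $T_\sigma^\perp$-component realizes the target scale $\varepsilon^{(n+2)/2}$; this typically demands aligning $v$ with a near-kernel direction of the linearized operator coming from second-order bubble-interaction modes, rather than a generic orthogonal bump. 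Third, the lower bound on $\mathrm{dist}(u, \mathcal{M})$ must account for the curvature of $\mathcal{M}$ at $\sigma$, via a Lyapunov--Schmidt-style implicit-function argument quantifying the gap between $u$ and the curved submanifold, so that the orthogonal component of $v$ truly contributes to the distance rather than being absorbed by an adjustment of the bubble parameters.
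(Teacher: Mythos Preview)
Your overall architecture---perturb $\sigma=U_1+U_2$ by a correction orthogonal to the tangent space, then compare the size of the correction to the equation residual---is the right one, and your third obstacle (the curvature argument for the lower bound on $\mathrm{dist}(u,\mathcal M)$) is exactly what is needed. However, the proposal contains a scale error that makes the core mechanism collapse.

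You claim $\|\sigma^p-U_1^p-U_2^p\|_{\mathscr{D}^{-1}}\asymp \varepsilon^n$ for $n\geq 3$. This is incorrect: the sharp estimate is $\|f\|_{\mathscr{D}^{-1}}\asymp \varepsilon^{(Q+2)/4}=\varepsilon^{(n+2)/2}$ (Lemmas~\ref{fh-1} and~\ref{sharpes} in the paper). Since $(n+2)/2<n$ for $n\geq 3$, $\|f\|_{\mathscr{D}^{-1}}$ is of the \emph{same} order as the target size of $\|v\|$, not smaller. Consequently, if you merely ask that $(\Delta_{\mathbb H^n}+p\sigma^{p-1})v$ be ``comparable or lower order'' to the interaction defect, the residual $h=\Delta_{\mathbb H^n}u+|u|^{p-1}u$ will also be of order $\varepsilon^{(n+2)/2}$, and you only recover the trivial linear bound $\|\rho\|\lesssim\|h\|$.

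The missing mechanism is that $v$ must be chosen to \emph{cancel} the $\mathscr{F}^\perp$-part of $f+N(v)$ exactly. The paper does this by a Lyapunov--Schmidt fixed point (Lemma~\ref{sharp1}): one solves $(\mathrm{id}-P_{\mathscr F^\perp}K)\rho=-P_{\mathscr F^\perp}\Delta_{\mathbb H^n}^{-1}(f+N(\rho))$ in $\mathscr F^\perp$ via contraction, which forces the residual $h$ to lie entirely in the finite-dimensional range $\Delta_{\mathbb H^n}\mathscr F$. The size of that residual is then governed by the \emph{tangential projections} $|\int f\,Z_i^a|\lesssim \varepsilon^{(Q-2)/2}=\varepsilon^n$ (Lemma~\ref{fz} and the estimates in Proposition~\ref{999}), which is genuinely smaller than $\|f\|_{\mathscr D^{-1}}$. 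This gap---between $\|P_{\mathscr F}\Delta_{\mathbb H^n}^{-1}f\|\lesssim\varepsilon^n$ and $\|P_{\mathscr F^\perp}\Delta_{\mathbb H^n}^{-1}f\|\asymp\varepsilon^{(n+2)/2}$---is precisely what produces the sharp exponent, and it is absent from your outline.
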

\begin{remark}
  When $n=1$, the estimate in Theorem \ref{thm1-2} is automatically sharp, which can be seen by considering the simple perturbation \( u_{\epsilon} = (1 + \epsilon)Q \) for some small \(\epsilon>0 \).
\end{remark}

\section{Preliminaries}
\subsection{Basic of  Jersion-Lee bubble}
In this subsection, we introduce some basic properties of the Jersion–Lee bubble.
Recall the standard bubble
  $$U(z,t)= \frac{c_0}{[(1+|z|^2)^2 +t^2]^{\frac{Q-2}{4}}},$$
and its transformed version under scaling and translations:
  $$\mathfrak{g}_{\lambda, \xi_0}U(z,t)=  \frac{c_0\lambda^{\frac{Q-2}{2}}}{[(1+\lambda^2|z- z_0|^2)^2 +\lambda^4(t-t_0-2 \text{Im} (\overline{z}\cdot z_0) )^2]^{\frac{Q-2}{4}}} .$$
  For $\eta \in \mathbb{R}^{2n+1}\simeq\mathbb{H}^n$ , let  $\eta^{(a)}$ denote its $a$-th coordinate.
we define
  \[Z^{a}=\frac{\partial \mathfrak{g}_{1, \eta}U }{\partial\eta^{(a)}}|_{ \eta=0},~ a=1,\ldots, 2n+1.\]
  In addition, we define
\begin{equation}\label{Z2n}
\begin{aligned}
Z^{2n+2}(\xi)&= \frac{\partial \mathfrak{g}_{\lambda, 0}U }{\partial\lambda}|_{\lambda=1}(\xi)=\frac{Q-2}{2}U(\xi)+TU(\xi) \\& = \frac{Q-2}{2}U(\xi)-(Q-2)\frac{|z|^2(1+|z|^2)+t^2}{(1+|z|^2)^2+t^2} U(\xi),
\end{aligned}
\end{equation}
where the vector field $T$ is given by $$T=\sum_{j=1}^{n} (x_{j}\frac{\partial}{\partial x_j} + y_{j}\frac{\partial}{\partial y_j})+2t\frac{\partial}{\partial t}.$$
As a consequence of integrating by parts, since $\mbox{div }T=Q$,  we obtain:
  \begin{equation}\label{Z}
  p\int_{\mathbb{H}^n } TU \cdot U^{p-1} \, d\xi=\int_{\mathbb{H}^n } T(U^p) \, d\xi= -Q \int_{\mathbb{H}^n }  U^p \, d\xi.
  \end{equation}
  Furthermore, the following estimate holds:
\begin{equation*}|Z^{a}|\lesssim U,~\forall a=1,\ldots, 2n+2.
\end{equation*}

In order to establish our results, we require the following non-degeneracy property of Jersion-Lee bubble ( recall that $p=\frac{Q+2}{Q-2}$):
\begin{lemma}\label{nonde}
  If $u \in \mathscr{D}^1$ is a solution to $$-\Delta_{
  \mathbb{H}^n}u=pU^{p-1}u ~\text{in}~ \mathbb{H}^n,$$
  then $u$ must be a linear combination of the functions  $\{Z^{a}\}_{a=1}^{2n +2}$.
\end{lemma}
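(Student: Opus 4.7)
The plan is to transfer the linearized equation on $\mathbb{H}^n$ to a spectral problem on the CR sphere $S^{2n+1}\subset\mathbb{C}^{n+1}$ via the Cayley transform, where the spectrum of the sub-Laplacian admits an explicit spherical-harmonic decomposition. The non-degeneracy then reduces to matching a specific eigenvalue with a known eigenspace and checking that it has the right dimension, namely $2n+2$.

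First, I would introduce the Cayley map $\mathcal{C}:\mathbb{H}^n\to S^{2n+1}\setminus\{p_0\}$, a CR-diffeomorphism under which the standard Jerison-Lee bubble $U$ pulls back (up to the intrinsic Jacobian conformal factor $F$) to a constant on $S^{2n+1}$. Writing $u = F\cdot(v\circ\mathcal{C}^{-1})$ and invoking the conformal transformation law for the sub-Laplacian, the equation $-\Delta_{\mathbb{H}^n}u=pU^{p-1}u$ converts into a clean eigenvalue problem
\begin{equation*}
\mathcal{L}_\theta v = \mu v\quad\text{on } S^{2n+1},
\end{equation*}
where $\mathcal{L}_\theta=-\Delta_b+\frac{Q-2}{2}R$ is the conformal sub-Laplacian with respect to the standard contact form $\theta$, $R$ is the (constant) Webster scalar curvature of $S^{2n+1}$, and $\mu$ is determined by the normalization constant $c_0$ of $U$.

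Next, I would use the bi-graded decomposition $L^2(S^{2n+1})=\bigoplus_{j,k\geq 0}\mathcal{H}_{j,k}$ due to Folland--Greiner, on which $\Delta_b$ acts diagonally with eigenvalues depending only on $(j,k,n)$. Matching $\mu$ against this list singles out the eigenspace $\mathcal{H}_{1,0}\oplus\mathcal{H}_{0,1}$, whose dimension is $2(n+1)=2n+2=Q$. Parallel to this abstract count, I would produce $2n+2$ explicit solutions of the Heisenberg linearized equation by differentiating curves of Jerison-Lee bubbles: the translations $\eta\mapsto \mathfrak{g}_{1,\eta}U$ yield $Z^a$ for $a=1,\dots,2n+1$, while the dilation $\lambda\mapsto\mathfrak{g}_{\lambda,0}U$ yields $Z^{2n+2}$ as in \eqref{Z2n}; since each curve consists of bona fide solutions of \eqref{CRyamabe}, the derivatives solve the linearized equation. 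Their linear independence is transparent from the distinct decay and scaling behaviors in their explicit forms, so $\{Z^a\}_{a=1}^{2n+2}$ already exhausts a $(2n+2)$-dimensional subspace of the kernel, forcing equality with the full kernel by the dimension count.

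The main obstacle is the second step: one must verify that the value of $\mu$ produced by the normalization of $U$ solving \eqref{CRyamabe} lands precisely on the first nontrivial eigenvalue of $\mathcal{L}_\theta$ (corresponding to $(j,k)\in\{(1,0),(0,1)\}$) and not on a higher one, since otherwise the kernel would be strictly larger than $\vspan\{Z^a\}$. This calibration relies essentially on the CR Yamabe normalization and on the Folland--Greiner eigenvalue formulas; with this in hand, the remaining ingredients — the conformal covariance of $\mathcal{L}_\theta$ and the explicit construction of the $Z^a$ — are standard.
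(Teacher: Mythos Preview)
Your sketch is essentially correct and is indeed the standard route to this non-degeneracy result: Cayley-transform to $S^{2n+1}$, reduce to an eigenvalue problem for the conformal sub-Laplacian, and use the Folland bigraded spherical-harmonic decomposition to identify the relevant eigenspace as $\mathcal{H}_{1,0}\oplus\mathcal{H}_{0,1}$, of dimension $2n+2$.

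However, you should be aware that the paper itself does not supply any argument at all: its entire proof consists of the single line ``See Lemma 5 in Malchiodi--Uguzzoni \cite{Malchiodi}.'' So there is no ``paper's own proof'' to compare against, only a citation. What you have written is, in outline, precisely the argument carried out in that reference (and in related works such as Jerison--Lee), so your proposal is not an alternative approach but rather a fleshing-out of the cited result. The one point you flag as the ``main obstacle'' --- checking that the normalization constant $c_0$ in $U$ produces exactly the first nontrivial eigenvalue --- is indeed where the actual computation lives, and in Malchiodi--Uguzzoni this is handled by tracking the explicit constants through the Cayley transform and the Webster scalar curvature of the standard sphere; you would need to reproduce that calculation to make your sketch a complete proof.
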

\begin{proof}
  See Lemma 5 in Malchiodi-Uguzzoni \cite{Malchiodi}.
\end{proof}

\subsection{Interaction between bubbles}
We define the set of all gauge transformations associated with the standard bubble as
\begin{equation*}\label{gauge}
\mathscr{G}:=\{\mathfrak{g}_{\lambda, \xi}\mid \lambda >0, \xi \in \mathbb{H}^n\}.
\end{equation*}
It is straightforward to verify that each \(\mathfrak{g} \in \mathscr{G}\) is an isometry on both \(\mathscr{D}^{1}\) and \(L^{2_Q^*}\).
Moreover,
\(\mathscr{G}\) forms a group under composition.
For the sake of convenience, we will occasionally denote
 $\mathfrak{g}_{\lambda_i, \xi_i}$ by $\mathfrak{g}_i$. By direct calculation, we obtain the following relationship:
\begin{equation}\label{sym5}
\mathfrak{g}_i^{-1}\mathfrak{g}_{j}=\mathfrak{g}_{\frac{\lambda_j}{\lambda_i}, \delta_{\lambda_i}(\xi_i^{-1}\circ \xi_j)}.
\end{equation}

The following lemma holds:
\begin{lemma}\label{gg}
For a given sequence $\{\lambda_k,\xi_k \}_{k=1}^{\infty}$, the following statements are equivalent:
  \begin{itemize}
    \item For each $u\in \mathscr{D}^{1}$, $\mathfrak{g}_k u\rightharpoonup 0$ weakly in $\mathscr{D}^{1}$.
    \item For each $u\in \mathscr{D}^{1}$, $\mathfrak{g}_k^{-1} u\rightharpoonup 0$ weakly in $\mathscr{D}^{1}$.
    \item $|\log \lambda_k| +|\xi_k| \to +\infty.$
    \item $|\log \lambda_k| +\lambda_k |\xi_k| \to +\infty.$
  \end{itemize}
   In any of these cases, we say $\mathfrak{g}_k \rightharpoonup 0$.
\end{lemma}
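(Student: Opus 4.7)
\emph{Overall strategy.} My plan is to close the chain of implications $(1)\Leftrightarrow(2)\Leftrightarrow(3)\Leftrightarrow(4)$ by three essentially independent arguments. For $(3)\Leftrightarrow(4)$ I do a purely arithmetic case split: if $\lambda_k$ remains in a compact subset of $(0,\infty)$ then $|\xi_k|$ and $\lambda_k|\xi_k|$ are comparable, so $|\xi_k|\to\infty$ and $\lambda_k|\xi_k|\to\infty$ are equivalent; otherwise $|\log\lambda_k|\to\infty$ already forces both sums to diverge. For $(1)\Leftrightarrow(2)$ I exploit the isometry of each $\mathfrak{g}_k$ on $\mathscr{D}^1$ noted at the beginning of Section 2.2: the identity $\langle \mathfrak{g}_k u,v\rangle_{\mathscr{D}^1}=\langle u,\mathfrak{g}_k^{-1}v\rangle_{\mathscr{D}^1}$ interchanges $\mathfrak{g}_k$ and $\mathfrak{g}_k^{-1}$ in the definition of weak convergence to zero, giving the equivalence at once.

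\emph{From (4) to (1).} Since $\|\mathfrak{g}_k u\|_{\mathscr{D}^1}=\|u\|_{\mathscr{D}^1}$ is uniformly bounded, I only need to verify that $\int(\mathfrak{g}_k u)\,\phi\,d\xi\to 0$ for $u$ and $\phi$ ranging over the dense subclass $C_0^\infty(\mathbb{H}^n)$. The change of variables $\zeta=\delta_{\lambda_k}(\xi_k^{-1}\circ\eta)$, whose Jacobian is $\lambda_k^Q$, yields
\[
\int(\mathfrak{g}_k u)(\eta)\,\phi(\eta)\,d\eta=\lambda_k^{-(Q+2)/2}\int u(\zeta)\,\phi\bigl(\xi_k\circ\delta_{1/\lambda_k}\zeta\bigr)\,d\zeta.
\]
I then split into three subsequential cases prescribed by (4). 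If $\lambda_k\to\infty$, the Jacobian prefactor alone absorbs the uniformly bounded integrand. If $\lambda_k$ stays in a compact subset of $(0,\infty)$, then (4) forces $|\xi_k|\to\infty$, and the Cygan quasi-triangle inequality $|\xi_k\circ\delta_{1/\lambda_k}\zeta|\ge|\xi_k|-\lambda_k^{-1}|\zeta|$ pushes the argument of $\phi$ outside its support once $k$ is large. Finally, if $\lambda_k\to 0$, the constraint $\xi_k\circ\delta_{1/\lambda_k}\zeta\in\mathrm{supp}(\phi)$ confines $\zeta$ to the image under $\delta_{\lambda_k}$ of a fixed bounded set, a region of measure $O(\lambda_k^Q)$; combined with the prefactor this gives a bound of order $\lambda_k^{(Q-2)/2}\to 0$.

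\emph{From (1) to (3) and the main obstacle.} I argue by contrapositive. Assuming (3) fails, I extract a subsequence along which $\lambda_{k_j}\to\lambda_\infty\in(0,\infty)$ and $\xi_{k_j}\to\xi_\infty\in\mathbb{H}^n$; the dominated convergence theorem applied to the explicit formula for $\mathfrak{g}_{\lambda,\xi}U$ from Section 2.1 gives strong convergence $\mathfrak{g}_{k_j}U\to \mathfrak{g}_{\lambda_\infty,\xi_\infty}U$ in $\mathscr{D}^1$, and since the limit is a nontrivial Jerison--Lee bubble this contradicts $\mathfrak{g}_k U\rightharpoonup 0$. The only genuine subtlety is the case $\lambda_k\to 0$ in the step $(4)\Rightarrow(1)$, where the divergent Jacobian prefactor has to be defeated by the vanishing measure of the admissible region; both effects come from the anisotropic Heisenberg dilations and are balanced precisely by the positive exponent $(Q-2)/2$. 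Apart from this scaling balance and the Cygan quasi-triangle inequality, no deeper machinery is required.
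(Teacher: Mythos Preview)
Your argument is correct and self-contained, whereas the paper gives no proof of its own: it simply cites \cite[Lemma~9.13]{Tintarev2007}. Your route---density reduction to $C_0^\infty$ test functions via the isometry of $\mathfrak{g}_k$, the explicit change of variables $\zeta=\delta_{\lambda_k}(\xi_k^{-1}\circ\eta)$, and a three-regime subsequential case split on $\lambda_k$---is exactly the standard hands-on verification underlying that reference. Two small points of wording: in the $\lambda_k\to 0$ case the set $\xi_k^{-1}\circ\mathrm{supp}(\phi)$ is not literally a \emph{fixed} bounded set (it translates with $k$), but since left translation preserves Haar measure its volume is fixed, and that is all your $O(\lambda_k^Q)$ bound requires; and Cygan's inequality for the Kor\'anyi norm is in fact a genuine triangle inequality rather than merely a quasi-triangle one, so the lower bound $|\xi_k\circ\delta_{1/\lambda_k}\zeta|\ge|\xi_k|-\lambda_k^{-1}|\zeta|$ holds exactly. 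Neither remark affects the validity of your proof.
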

\begin{proof}
See \cite[Lemma 9.13]{Tintarev2007}.
\end{proof}
\begin{remark}
As a consequence of Lemma \ref{gg}, for sequences $\{\mathfrak{g}_{\lambda_1^{(k)}, \xi_1^{(k)}}\}_{k=1}^{\infty}$ and $\{\mathfrak{g}_{\lambda_2^{(k)}, \xi_2^{(k)}}\}_{k=1}^{\infty}$ we obtain
that $(\mathfrak{g}_{\lambda_1^{(k)}, \xi_1^{(k)}})^{-1}\mathfrak{g}_{\lambda_2^{(k)}, \xi_2^{(k)}} \rightharpoonup 0$ if and only if $\varepsilon_{12}^{(k)}\to 0$ , i.e.,
\[ \min \{ \frac{\lambda_1^{(k)}}{\lambda_2^{(k)}},  \frac{\lambda_2^{(k)}}{\lambda_1^{(k)}}, \frac{1}{\lambda_1^{(k)}\lambda_2^{(k)} |(\xi_1^{(k)})^{-1}\circ\xi_2^{(k)}|^2 }\} \to 0 .\]
 
\end{remark}

For each $g \in \mathscr{G}$, the following identity holds:
\begin{equation}\label{sym1}
(\mathfrak{g}u, v)_{\mathscr{D}^{1}} =(u, \mathfrak{g}^{-1}v)_{\mathscr{D}^{1}}, \forall u, v\in \mathscr{D}^{1}.
\end{equation}
For any $\alpha, ~\beta,~ \gamma \geq 0$ with $\alpha+\beta+\gamma=2_Q^*$, and $u,~ v,~w \in L^{2_Q^*}$, we have the invariance property
\begin{equation}\label{sym2}
\int_{ \mathbb{H}^n} |u|^{\alpha} |v|^{\beta} |w|^{\gamma}  \, d\xi =\int_{ \mathbb{H}^n} |\mathfrak{g}u|^{\alpha} |\mathfrak{g}v|^{\beta} |\mathfrak{g}w|^{\gamma}  \, d\xi,
\end{equation}
and if  there is also $ \alpha>0$, then
\begin{equation}\label{sym3}
\int_{ \mathbb{H}^n} |(u+v)^{\alpha}-u^{\alpha}-v^{\alpha} |^{\frac{2_Q^*}{\alpha}}  \, d\xi =\int_{ \mathbb{H}^n} |(\mathfrak{g}u+\mathfrak{g}v)^{\alpha}-(\mathfrak{g}u)^{\alpha}-(\mathfrak{g}v)^{\alpha} |^{\frac{2_Q^*}{\alpha}}   \, d\xi.
\end{equation}
Additionally,
for $u\in \mathscr{D}^{1}$ and $v\in \mathscr{D}^{-1}$, and $\mathfrak{g}=\mathfrak{g}_{\lambda, \eta}$, the following holds:
\begin{equation}
\int_{ \mathbb{H}^n} \mathfrak{g}u  v \, d\xi =\lambda^{-2} \int_{ \mathbb{H}^n}  u   \mathfrak{g}^{-1}v \, d\xi.
\end{equation}
Therefore, we obtain
\begin{equation}\label{sym4}
\|v\|_{\mathscr{D}^{-1}}=\|\lambda^{-2}\mathfrak{g} ^{-1}v \|_{\mathscr{D}^{-1}}.
\end{equation}

Giving $\{\lambda_i, \xi_i\}_{i=1}^{m}$, we will henceforth use the notations: $$\mathfrak{g}_i=\mathfrak{g}_{\lambda_i, \xi_i},~ U_i=\mathfrak{g}_iU,~\text{and}~  Z_i^a=\mathfrak{g}_i Z^a.$$
Suppose that 
$\varepsilon=\max\limits_{i\neq j}\varepsilon_{ij}$
is sufficiently small. Then by using equations  \eqref{Z2n}-\eqref{sym4} and Lemma \ref{app1}-\ref{app4} in Appendix, we conclude that:
\begin{enumerate}[$\bullet$]
  \item  For all $i\neq j, ~\alpha+\beta=2_Q^*$ and $\alpha\neq \beta\geq 0$, we have
\begin{equation}\label{e1}
 \int_{\mathbb{H}^n}|U_i|^{\alpha}|U_j|^{\beta} \, \, d\xi  \approx  \varepsilon_{ij}^{\frac{\min\{\alpha,\beta\}(Q-2)}{2}}.
\end{equation}
  \item For all $ i \neq j$ with $\lambda_i\leq \lambda_j$, we have
\begin{equation}\label{e2}
\begin{aligned}
\int_{\mathbb{H}^n} U_j^{p-1} U_i Z_j^{2n+2} \, d\xi 
&= \left( \frac{Q-2}{2} + \frac{\int_{\mathbb{H}^n} TU \cdot U^{p-1} \, d\xi }{\int_{\mathbb{H}^n} U^p \, d\xi } \right) \int_{\mathbb{H}^n} U_j^{p} U_i \, d\xi + o(\varepsilon_{ij}^{\frac{(Q-2)}{2}}) \\
&= -\frac{(Q-2)^2}{2(Q+2)} \int_{\mathbb{H}^n} U_j^{p} U_i \, d\xi + o(\varepsilon_{ij}^{\frac{(Q-2)}{2}}).
\end{aligned}
\end{equation}
  \item For all $i\neq j$ with $n>2$, we have:
\begin{equation}\label{e3}
\int_{\mathbb{H}^n}   |(U_i+U_j)^{p}-U_i^{p}-U_j^{p}|^{\frac{2Q}{Q+2}} \, d\xi\lesssim  \varepsilon_{ij}^{\frac{Q}{2}}.
\end{equation}
  \item For $n=2$, we have
\begin{equation}\label{e4}
\|U_iU_j\|_{\mathscr{D}^{-1}}\lesssim \varepsilon_{ij}^2|\log \varepsilon_{ij}|^{\frac{1}{2}}.
\end{equation}
\end{enumerate}

Furthermore, we have the following result.
\begin{lemma}\label{fh-1}
The function
$
f = ( \sum_{i=1}^m U_i )^p - \sum_{i=1}^m U_i^p.
$
 satisfies 
\begin{equation}\label{2.14}
\| f \|_{\mathscr{D}^{-1}} \lesssim 
\begin{cases}
\varepsilon^{\frac{Q+2}{4}}, & \text{if } n\geq 3, \\
\varepsilon^2 |\log \varepsilon|^{\frac{1}{2}}, & \text{if } n=2, \\
\varepsilon, & \text{if } n=1.
\end{cases}
\end{equation}
\end{lemma}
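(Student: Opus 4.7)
The plan is to dualize via the Sobolev embedding $L^{2Q/(Q+2)}\hookrightarrow\mathscr{D}^{-1}$ (the dual of $\mathscr{D}^{1}\hookrightarrow L^{2_Q^*}$) and reduce the $m$-bubble residual $f$ to sums over pairs of bubbles, each pair then being handled by the interaction estimates \eqref{e1}--\eqref{e4} already in hand.

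For $n\ge 3$ (so $1<p<2$), I would first prove the pointwise inequality
\[
\Bigl|\Bigl(\sum_{i=1}^{m}a_i\Bigr)^p-\sum_{i=1}^{m}a_i^p\Bigr|\le C_{m,p}\sum_{i<j}\bigl[(a_i+a_j)^p-a_i^p-a_j^p\bigr],\quad a_i\ge 0.
\]
After reordering so that $a_1\ge\cdots\ge a_m$ and setting $S_{k}=a_1+\cdots+a_k$, one has the telescoping identity $S_m^p-\sum a_i^p=\sum_{k=2}^{m}\bigl[(S_{k-1}+a_k)^p-S_{k-1}^p-a_k^p\bigr]$. Since $a_k\le a_1$, a Taylor-type estimate gives $(S_{k-1}+a_k)^p-S_{k-1}^p-a_k^p\lesssim S_{k-1}^{p-1}a_k\lesssim a_1^{p-1}a_k$, and the latter is comparable to $(a_1+a_k)^p-a_1^p-a_k^p$ for $1<p<2$ and $a_k\le a_1$. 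Applying this pointwise inequality with $a_i=U_i$, taking the $L^{2Q/(Q+2)}$ norm, and invoking \eqref{e3} pair by pair yields $\|f\|_{\mathscr{D}^{-1}}\lesssim\varepsilon^{(Q+2)/4}$.

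For $n=2$, the case $p=2$ makes the expansion exact: $f=2\sum_{i<j}U_iU_j$, and the triangle inequality in $\mathscr{D}^{-1}$ together with \eqref{e4} immediately yields $\|f\|_{\mathscr{D}^{-1}}\lesssim\varepsilon^2|\log\varepsilon|^{1/2}$. For $n=1$ (so $p=3$, $Q=4$) I expand
\[
f=3\sum_{i\ne j}U_i^2U_j+6\sum_{i<j<k}U_iU_jU_k,
\]
and use $L^{4/3}\hookrightarrow\mathscr{D}^{-1}$. The double-product terms give $\|U_i^2U_j\|_{L^{4/3}}\approx\varepsilon_{ij}$ via \eqref{e1} with $(\alpha,\beta)=(8/3,4/3)$; for the triple products, AM-GM ($U_j^{4/3}U_k^{4/3}\le\frac{1}{2}(U_j^{8/3}+U_k^{8/3})$) reduces $\int U_i^{4/3}U_j^{4/3}U_k^{4/3}\,d\xi$ to pairwise interaction integrals of order $\varepsilon_{ab}^{4/3}$, whence $\|U_iU_jU_k\|_{L^{4/3}}\lesssim\varepsilon$.

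The main delicacy is the sharp pointwise bound used in the $n\ge 3$ case: the naive Taylor estimate $(a+b)^p-a^p-b^p\lesssim a^{p-1}b$ combined with \eqref{e1} would yield only $\varepsilon^2$, which is strictly weaker than the asserted $\varepsilon^{(Q+2)/4}$ when $n\ge 3$. The telescoping-and-pairing reduction above is designed precisely to route the estimate through \eqref{e3}, which already captures the correct cancellation between the two bubbles in each pair.
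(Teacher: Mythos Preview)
Your proposal is correct and follows essentially the same route as the paper: dualize via $L^{2Q/(Q+2)}\hookrightarrow\mathscr{D}^{-1}$, reduce the $m$-bubble residual to pairwise interactions via the pointwise inequality $|(\sum a_i)^p-\sum a_i^p|\lesssim\sum_{i<j}[(a_i+a_j)^p-a_i^p-a_j^p]$ (which the paper simply cites from \cite[Lemma A.6]{Wei} while you supply a telescoping proof), and then invoke \eqref{e3}, \eqref{e4}, and \eqref{e1} in the cases $n\ge 3$, $n=2$, $n=1$ respectively. Your explicit expansion and AM-GM treatment of the triple products for $n=1$ is exactly what the paper's terse ``H\"older inequality'' hides.
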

\begin{proof}
  For $n\geq 3$, we can deduce \eqref{2.14} from  
  \begin{equation}\label{norm-1}
  \|\cdot\|_{\mathscr{D}^{-1}}\lesssim \|\cdot\|_{L^{\frac{2Q}{Q+2}}},
  \end{equation} \eqref{e3} and the fact that (cf. \cite[Lemma A.6]{Wei})
  \begin{equation*}\label{qqq}
|(\sum_{i=1}^{m} U_i)^p-\sum_{i=1}^{m} U_i^p|\lesssim  \sum_{1\leq i\neq j\leq m} \Big{(} (U_i+U_j)^p-U_i^p-U_j^p \Big{)},   \mbox{    if } 1<p\leq 2.
\end{equation*}
For $n=2$, it follows from \eqref{e4} directly.
For $n=1$, since $p=\frac{n+2}{n}=3$, it follows from  \eqref{norm-1}, \eqref{e1} and H\"{o}lder inequality.
\end{proof}

Finally, we point out here that  Lemma \ref{fh-1} is sharp for $n\geq 2$.
To illustrate this, set $$V(z,t)=U(z,t+\varepsilon^{-1}),  ~~f=(U+V)^p-U^p-V^p.$$ 
The following lemma holds:
\begin{lemma}\label{sharpes}
Following the above notations,  for $\varepsilon\in (0,1)$, we have
\begin{equation*}\|f\|_{\mathscr{D}^{-1}}\gtrsim
\begin{cases}
  \varepsilon^{\frac{Q+2}{4}}, & \mbox{if } n\geq 3, \\
 \varepsilon^2|\log  \varepsilon|^{\frac{1}{2}}, & \mbox{if } n=2.
\end{cases}
\end{equation*}
\end{lemma}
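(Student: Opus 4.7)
The plan is to exploit the representation of $\|\cdot\|_{\mathscr{D}^{-1}}$ via the fundamental solution $c_Q|\xi|^{-(Q-2)}$ of $-\Delta_{\mathbb{H}^n}$. Since $(a+b)^p-a^p-b^p\geq 0$ for all $a,b\geq 0$ with $p\geq 1$, we have $f\geq 0$, and
\[
\|f\|_{\mathscr{D}^{-1}}^2=\int_{\mathbb{H}^n} f\,(-\Delta_{\mathbb{H}^n})^{-1}f\,d\xi = c_Q\int_{\mathbb{H}^n}\!\!\int_{\mathbb{H}^n}\frac{f(\xi)\,f(\eta)}{|\xi^{-1}\circ\eta|^{Q-2}}\,d\xi\,d\eta.
\]
Hence we may bound $\|f\|_{\mathscr{D}^{-1}}$ from below simply by restricting the double integral to a region where a good pointwise lower bound on $f$ is available.

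The natural choice is the intermediate annulus $\Omega:=\{\xi\in\mathbb{H}^n:C_0\leq|\xi|\leq \varepsilon^{-1/2}/C_0\}$ for a large constant $C_0$. On $\Omega$ one checks $U(\xi)\sim|\xi|^{-(Q-2)}$, while the explicit form of $V(z,t)=U(z,t+\varepsilon^{-1})$ together with $|\xi|\leq\varepsilon^{-1/2}/C_0$ yields $V(\xi)\sim\varepsilon^{(Q-2)/2}$ and $V\leq U/2$. When $n=2$ (so $p=2$) we simply have $f=2UV\gtrsim\varepsilon^{2}|\xi|^{-4}$; when $n\geq 3$ (so $1<p<2$), the convexity inequality $(U+V)^p\geq U^p+pU^{p-1}V$ combined with $V^p\leq 2^{-(p-1)}U^{p-1}V$ on $\Omega$ gives $f\geq (p-1)U^{p-1}V\gtrsim\varepsilon^{(Q-2)/2}|\xi|^{-4}$. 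In either case,
\[
\|f\|_{\mathscr{D}^{-1}}^2\gtrsim\varepsilon^{Q-2}\int_\Omega\int_\Omega\frac{d\xi\,d\eta}{|\xi|^{4}|\eta|^{4}|\xi^{-1}\circ\eta|^{Q-2}}.
\]

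It remains to evaluate this double Riesz integral, which I plan to do by a dyadic decomposition $\Omega=\bigcup_{0\leq k\leq K}A_k$ with $A_k=\{2^k\leq|\xi|\leq 2^{k+1}\}$ and $K\sim\tfrac{1}{2}|\log\varepsilon|$. By the left-invariance of $|\cdot^{-1}\circ\cdot|$ and the volume formula $|B_r|=r^Q|B_1|$, a standard computation yields $\int_{A_l}|\xi^{-1}\circ\eta|^{-(Q-2)}\,d\eta\lesssim 2^{2l}$ uniformly for $\xi\in A_k$ with $k\leq l$, both in the off-diagonal case $k<l$ (where $|\xi^{-1}\circ\eta|\sim 2^l$) and in the diagonal case $k=l$ (where the kernel is integrated over a Kor\'anyi ball of radius $\sim 2^l$). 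Assembling this,
\[
\int_\Omega\int_\Omega\frac{d\xi\,d\eta}{|\xi|^{4}|\eta|^{4}|\xi^{-1}\circ\eta|^{Q-2}}\sim\sum_{0\leq k\leq l\leq K}2^{(Q-4)k-2l}\sim\sum_{l=0}^{K}2^{(Q-6)l},
\]
which evaluates to $\varepsilon^{-(Q-6)/2}$ when $Q>6$ (i.e.\ $n\geq 3$) and to $|\log\varepsilon|$ when $Q=6$ (i.e.\ $n=2$). Multiplying by the prefactor $\varepsilon^{Q-2}$ produces $\|f\|_{\mathscr{D}^{-1}}^2\gtrsim\varepsilon^{(Q+2)/2}$ and $\|f\|_{\mathscr{D}^{-1}}^2\gtrsim\varepsilon^{4}|\log\varepsilon|$ respectively, exactly the claimed sharp lower bounds. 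The chief obstacle is the careful dyadic bookkeeping of the double integral on $\mathbb{H}^n$, where the non-isotropic dilations and non-abelian group law require controlling the kernel $|\xi^{-1}\circ\eta|^{-(Q-2)}$ separately on each pair of annuli; this is manageable via the standard volume estimates for Kor\'anyi balls and the fact that, once the correct intermediate scale $\sim\varepsilon^{-1/2}$ is identified, the sharp scaling emerges directly from the borderline behavior of the geometric sum at $Q=6$.
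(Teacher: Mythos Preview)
Your approach is correct and genuinely different from the paper's. The paper argues by duality with carefully chosen test functions: for $n\geq 3$ it pairs $f$ against a cutoff $\eta$ supported on $B_{1/(2\sqrt\varepsilon)}$ (whose $\mathscr{D}^1$-norm scales like $\varepsilon^{(2-Q)/4}$), while for $n=2$ it pairs $f$ against $f^{1/2}=\sqrt{2UV}$ and separately estimates $\|f^{1/2}\|_{\mathscr{D}^1}$ from above. Your route instead writes $\|f\|_{\mathscr{D}^{-1}}^2$ via the Folland--Stein fundamental solution and bounds the resulting double Riesz integral from below by a dyadic computation on the intermediate annulus $C_0\leq|\xi|\leq\varepsilon^{-1/2}/C_0$. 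The advantage of your method is that it is \emph{unified}: the same computation handles both regimes and the transition at $Q=6$ is visible as the borderline of a geometric series. The paper's approach is arguably more elementary (no Green kernel needed) but requires two different constructions.

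One presentational point to tighten: you write the dyadic bound as $\int_{A_l}|\xi^{-1}\circ\eta|^{-(Q-2)}\,d\eta\lesssim 2^{2l}$ and then jump to a $\sim$ in the assembled sum, but for the lemma you only need the \emph{lower} bound. It suffices to keep the diagonal terms $k=l$: for $\xi$ in, say, the middle third of $A_k$ one has a Kor\'anyi ball of radius $c\,2^k$ inside $A_k$ centred at $\xi$, giving $\int_{A_k}|\xi^{-1}\circ\eta|^{-(Q-2)}\,d\eta\gtrsim 2^{2k}$, and hence a diagonal contribution $\gtrsim\sum_{k=0}^{K}2^{(Q-6)k}$, which is already the sharp order. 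Stating it this way avoids the appearance of deducing a lower bound from an upper one.
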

\begin{proof}
For $n\geq3$, let  $\eta$ be a cutoff function such that $\eta=1$ in $B_{\frac{1}{4\sqrt{\varepsilon}}}$ and $\eta=0$ outside $B_{\frac{1}{2\sqrt{\varepsilon}}}$. 
Note that in $B_{\frac{1}{2\sqrt{\varepsilon}}}$, we have $$U\gtrsim V\approx \varepsilon^{\frac{Q-2}{2}} \mbox{ and } f\approx  U^{p-1}V.$$
Therefore, 
\begin{equation*}
\begin{aligned}
\varepsilon^{\frac{2-Q}{4}} \|f\|_{\mathscr{D}^{-1}} &\gtrsim \|\eta\|_{\mathscr{D}^{1}}\|f\|_{\mathscr{D}^{-1}}\\&
\gtrsim
\int_{\mathbb{H}^n } f\eta \, d\xi
\\& \gtrsim \int_{B_{\frac{1}{2\sqrt{\varepsilon}}}} U^{p-1}V \, d\xi\\
&\gtrsim \varepsilon^{\frac{Q-2}{2}}\int_{B_{\frac{1}{2\sqrt{\varepsilon}}}} \frac{1}{1+|\xi|^4} \, d\xi\\
&\gtrsim \varepsilon.
\end{aligned}
\end{equation*}

For $n=2$,
we have $Q=6, p=2$ and
\begin{equation*}
\|f\|_{\mathscr{D}^{-1}}\|f^{\frac{1}{2}}\|_{\mathscr{D}^{1}}\gtrsim
\int_{\mathbb{H}^n } U^{\frac{3}{2}}V^{\frac{3}{2}} \, d\xi \gtrsim
\varepsilon^{3}\int_{B_{\frac{1}{2\sqrt{\varepsilon}}}} \frac{\, d\xi}{1+|\xi|^6}
\gtrsim  \varepsilon^{3} |\log \varepsilon|.
\end{equation*}
Directly calculation yields
\begin{align*}
    |X f^{\frac{1}{2}}|^2 \lesssim |XU|^2U^{-1}V+   |XV|^2V^{-1}U , \\
   |XU(\xi)|^2\approx \frac{|z|^2}{((1+|z|^2)^2 +t^2 )^3} \lesssim U(\xi)^{\frac{5}{2}}.
\end{align*}
By setting $\xi_0=(0, -\varepsilon^{-1})$, we thus obtain
\begin{equation*}
\begin{aligned}
\|f^{\frac{1}{2}}\|_{\mathscr{D}^{1}}^2
&\lesssim\int_{\mathbb{H}^n } U^{\frac{3}{2}}V \, d\xi\\&
\lesssim \varepsilon^{2}\int_{B_{\frac{1}{2\sqrt{\varepsilon}}}} \frac{\, d\xi}{1+|\xi|^6}
+ \varepsilon^{3} \int_{ B_{\frac{1}{2\sqrt{\varepsilon}}}^c \cap B_{\frac{1}{2\sqrt{\varepsilon}}}(\xi_0)}  \frac{\, d\xi}{1+|\xi_0^{-1}\circ \xi|^4}
+\int_{B_{\frac{1}{2\sqrt{\varepsilon}}}^c\setminus B_{\frac{1}{2\sqrt{\varepsilon}}}(\xi_0)} \frac{\, d\xi}{1+|\xi|^{10}}
\\&
\lesssim \varepsilon^{2}|\log \varepsilon|.
\end{aligned}
\end{equation*}
\end{proof}

\section{Proof of  Theorem \ref{thm1-2}}
\subsection{Sketch of the proof}
Under the assumption of Theorem~\ref{thm1-2}, and following an argument similar to Appendix A in Bahri--Coron~\cite{Bahri},  the smooth function
\[
G(\lambda_1, \ldots, \lambda_m, \xi_1, \ldots, \xi_m) = \| u - \sum_{k=1}^{m} \mathfrak{g}_{\lambda_k, \xi_k} U \|_{\mathscr{D}^{1}}^2,
\]
 attains its minimum at some point \((\lambda_1, \ldots, \lambda_m, \xi_1, \ldots, \xi_m)\) satisfying
\[
\frac{\lambda_i}{\widetilde{\lambda}_i} = 1 + o_{\delta}(1), 
\quad 
\lambda_i \widetilde{\lambda}_i |\xi_i^{-1} \circ \widetilde{\xi}_i |^2 = o_{\delta}(1),
\quad \forall\, 1\leq i\leq m.
\]
Set
\[
U_i = \mathfrak{g}_{\lambda_i, \xi_i} U, 
\quad 
\sigma = \sum_{i=1}^{m} U_i, \quad Z_i^a = \mathfrak{g}_{\lambda_i, \xi_i} Z^a, 
\quad
\rho = u - \sigma.
\]
By differentiating $G$, we derive the following orthogonality condition:
\begin{equation*}
\left( \rho, \frac{\partial \mathfrak{g}_{\lambda_i, \eta}U}{\partial \eta^{(a)}}\Big|_{\eta=\xi_i} \right)_{\mathscr{D}^{1}} = 0, 
\quad 
\left( \rho, \frac{\partial \mathfrak{g}_{r, \xi_i}U}{\partial r}\Big|_{r=\lambda_i} \right)_{\mathscr{D}^{1}} = 0,
\quad \forall\, 1\leq i\leq m,\ 1\leq a\leq 2n+1.
\end{equation*}
It can be verified that the sets
\[
\left\{ \frac{\partial \mathfrak{g}_{\lambda_i, \eta}U}{\partial \eta^{(a)}}\Big|_{\eta=\xi_i} \right\}_{1\leq a\leq 2n+1}
\cup 
\left\{ \frac{\partial \mathfrak{g}_{r, \xi_i}U}{\partial r}\Big|_{r=\lambda_i} \right\}
\quad \text{and} \quad
\left\{ Z_i^a \right\}_{1\leq a\leq 2n+2}
\]
span the same subspace for each \( 1\leq i\leq m \), and $$-\Delta_{\mathbb{H}^n }  Z_{i}^{a}=pU_{i}^{p-1}Z_{i}^{a}, \quad \forall\, 1\leq i\leq m,\ 1\leq a\leq 2n+1.$$
Consequently, we obtain
\begin{equation}\label{orthogonality}
\int_{\mathbb{H}^n}  pU_{i}^{p-1} \rho Z_i^a  \, d\xi   =\left( \rho, Z_i^a \right)_{\mathscr{D}^{1}} = 0, \quad \forall\, 1\leq i\leq m,\ 1\leq a\leq 2n+2.
\end{equation}
\begin{proof}[Proof of the Theorem \ref{thm1-2}]
 By direct calculation, we obtain
\begin{equation}\label{key}
\begin{aligned}
-\Delta_{\mathbb{H}^n } \rho-p\sigma^{p-1} \rho
&= f+h +  N(\rho),
\end{aligned}
\end{equation}
where $$f:=(\sum\limits_{i=1}^{m}  U_i )^p -\sum\limits_{i=1}^{m}  U_i^p,~ h:=-\Delta_{\mathbb{H}^n } u-|u|^{p-1}u,$$ and
  \begin{align*}
  N(\rho):=|\sigma+\rho|^{p-1}(\sigma+\rho) -\sigma^p -p\sigma^{p-1} \rho.
  \end{align*}
  Proposition \ref{main1} yields that
    \begin{equation}\label{maineq1}
     \varepsilon^{\frac{Q-2}{2}} \lesssim  \|\rho\|_{\mathscr{D}^{1}}^2+\|h\|_{\mathscr{D}^{-1}}.
   \end{equation}
   By Lemma \ref{main2}, we obtain
   \begin{equation*}
   \begin{aligned}
   \|\rho\|_{\mathscr{D}^{1}}&\lesssim \|f+h+N(\rho)\|_{\mathscr{D}^{-1}}\\&
   \lesssim \|f\|_{\mathscr{D}^{-1}}+\|h\|_{\mathscr{D}^{-1}}+\|N(\rho)\|_{L^{\frac{2Q}{Q+2}}}.
   \end{aligned}
   \end{equation*}
   Since
   \begin{equation*}
   |N(\rho)| \lesssim
    \begin{cases}
    \sigma^{p-2}\rho^2+|\rho|^p, & \mbox{if } p>2 ,\\
    |\rho|^p, & \mbox{if } 1<p\leq2,
  \end{cases}
  \end{equation*}
  it follows from Lemma \ref{fh-1} that
  \begin{equation}\label{maineq2}
 \|\rho\|_{\mathscr{D}^{1}}  \lesssim \|f\|_{\mathscr{D}^{-1}}+\|h\|_{\mathscr{D}^{-1}} \lesssim  \begin{cases}
             \varepsilon^{\frac{Q+2}{4}} +\|h\|_{\mathscr{D}^{-1}}, & \mbox{if } n\geq 3, \\
             \varepsilon^2|\log \varepsilon|^{\frac{1}{2}}+\|h\|_{\mathscr{D}^{-1}}, & \mbox{if } n=2,\\
              \varepsilon+\|h\|_{\mathscr{D}^{-1}} , & \mbox{if } n=1.
           \end{cases}
  \end{equation}
  Thus, by combining \eqref{maineq1} and \eqref{maineq2}, we deduce that
  \begin{equation}
   \varepsilon^{\frac{Q-2}{2}} \lesssim \|h\|_{\mathscr{D}^{-1}},
  \end{equation}
  and
  \begin{equation}
   \|\rho\|_{\mathscr{D}^{1}}  \lesssim\begin{cases}
             \|h\|_{\mathscr{D}^{-1}}^{\frac{n+2}{2n}}, & \mbox{if } n\geq 3,\\
             \|h\|_{\mathscr{D}^{-1}}|\log \|h\|_{\mathscr{D}^{-1}}|^{\frac{1}{2}}, & \mbox{if } n=2,\\
             \|h\|_{\mathscr{D}^{-1}}, & \mbox{if } n=1.
           \end{cases}
  \end{equation}
\end{proof}
\begin{proof}[Proof of Corollary \ref{cor1-1}]
 This proof is standard. Suppose, on the contrary, that for each $k$, there exists non-negative function $u_k$, such that $$(m-\frac{1}{2})S^{\frac{Q}{2}}\leq \|u_k\|_{\mathscr{D}^1} \leq (m+\frac{1}{2})S^{\frac{Q}{2}},$$ and
\begin{equation}\label{mddd}
  \inf_{ \lambda_i ,\xi_i} \|u_k-\sum_{i=1}^{m}\mathfrak{g}_{\lambda_i ,\xi_i }U \|_{\mathscr{D}^1} > k\|\Delta_{\mathbb{H}^n } u_k+|u_k|^{\frac{2}{n}}u_k\|_{\mathscr{D}^{-1}}.
\end{equation}
 Thus, $\{u_k\}$ is a Palais–Smale sequence for the functional associated with  the Euler-Lagrange  equation $\Delta_{\mathbb{H}^n } u+|u|^{\frac{2}{n}}u$. Proposition \ref{prop1.2} yields that, up to a subsequence,
 there exist    $\delta_{k}$-weakly interacting Jerison-Lee bubbles  $\{ \mathfrak{g}_{\lambda_i^{(k)},\xi_i^{(k)} } U\}$ such that $$\delta_{k} \mbox{ and }\| u_k -\sum_{i=1}^m \mathfrak{g}_{\lambda_i^{(k)},\xi_i^{(k)} } U \|_{\mathscr{D}^1 } \to 0,~~\text{as}~k\to +\infty.$$
 By applying Theorem \ref{thm1-2}, we arrive at a contradiction.
\end{proof}

The detailed proofs of the main missing estimates will be provided in the next two subsections.
\subsection{The first missing estimate}
Before giving the proof of Proposition \ref{main1}, we need the following counterpart of Lemma 2.1 in \cite{Wei}:
\begin{lemma}\label{fz}
For fixed $k$, under the notations in Section 3.1, we have
\begin{equation}
\int_{\mathbb{H}^n } f Z_k^{2n+2} \, d\xi=p \sum_{i\neq k}\int U_k^{p-1}U_i Z_k^{2n+2}  \, d\xi
+o(\varepsilon^{\frac{Q-2}{2}}).
\end{equation}
\end{lemma}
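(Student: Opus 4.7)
The plan is to expand $f$ about the $k$-th bubble and carefully control the remainder. By the gauge invariance identities \eqref{sym1}--\eqref{sym2} and the isometry property of $\mathfrak{g}_k$ on $\mathscr{D}^{1}$, I may reduce to $\lambda_k=1$ and $\xi_k=0$, so that $U_k=U$, $Z_k^{2n+2}=Z^{2n+2}$, and $|Z_k^{2n+2}|\lesssim U_k$ pointwise. Setting $W=\sum_{i\neq k}U_i$, I would decompose
\[
f = pU_k^{p-1}W + R, \qquad R := \bigl[(U_k+W)^p - U_k^p - pU_k^{p-1}W\bigr] - \sum_{i\neq k}U_i^p.
\]
Pairing with $Z_k^{2n+2}$ already produces the claimed main term $p\sum_{i\neq k}\int U_k^{p-1}U_iZ_k^{2n+2}\,d\xi$, so the lemma is reduced to the remainder estimate $\int R\, Z_k^{2n+2}\,d\xi = o(\varepsilon^{(Q-2)/2})$.

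Next, I would perform a domain decomposition $\mathbb{H}^n = \Omega_k \cup \Omega_k^c$ with $\Omega_k = \{U_k \geq W\}$. On $\Omega_k$, Taylor's theorem gives the standard bound $|(U_k+W)^p - U_k^p - pU_k^{p-1}W|\lesssim U_k^{p-2}W^2$ if $p\geq 2$ and $\lesssim W^p$ if $1<p\leq 2$, while $U_i^p\leq W^{p-1}U_i$ on $\Omega_k$. Combined with $|Z_k^{2n+2}|\lesssim U_k$ and the convexity-type inequality $W^q\leq C_m\sum_{i\neq k}U_i^q$, these estimates reduce $\int_{\Omega_k}|R|\,U_k\,d\xi$ to finite sums of integrals of the form $\int U_k^{a_1}U_i^{a_2}U_j^{a_3}\,d\xi$ with $a_1+a_2+a_3=p+1$ and $a_2,a_3>0$, and H\"older together with \eqref{e1} bounds each such integral by $o(\varepsilon^{(Q-2)/2})$ since at least two bubbles absorb a strictly positive fraction of the critical power.

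The main obstacle lies on the complementary region $\Omega_k^c$, because the naive estimate $\int U_i^p|Z_k^{2n+2}|\,d\xi\lesssim \int U_i^pU_k\,d\xi\approx \varepsilon_{ki}^{(Q-2)/2}$ is merely $O(\varepsilon^{(Q-2)/2})$, not $o$. The resolution is to regroup $R$ as a sum of purely pairwise interactions
\[
R = \sum_{i\neq k}\bigl[(U_k+U_i)^p - U_k^p - U_i^p - pU_k^{p-1}U_i\bigr] + \mathcal{T},
\]
where $\mathcal{T}$ collects only mixed products involving three or more distinct bubbles and is therefore $o(\varepsilon^{(Q-2)/2})$ when tested against $Z_k^{2n+2}\lesssim U_k$ via \eqref{e1}. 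Each pairwise bracket is then analyzed by splitting $\mathbb{H}^n$ into $\{U_k\geq U_i\}$ and $\{U_i>U_k\}$, Taylor-expanding about the dominant bubble in each subregion and exploiting the explicit profile of $U$ from \eqref{eq:heisenberg-norm} so that the would-be main contribution $\int U_i^pU_k\,d\xi$ is cancelled by the Taylor correction. Securing this pairwise $o(\varepsilon_{ki}^{(Q-2)/2})$ estimate is the delicate technical point; once it is in hand, summing in $i$ yields $\int R\,Z_k^{2n+2}\,d\xi = o(\varepsilon^{(Q-2)/2})$ and completes the proof.
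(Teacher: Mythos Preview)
Your route is workable and rests on the same devices as the paper---domain splitting by the dominant bubble, Taylor expansion about that bubble, and the ``$\theta$-shift'' $A^{\alpha}B^{\beta}\le A^{\alpha-\theta}B^{\beta+\theta}$ on the set $\{A\le B\}$ together with \eqref{e1}---but it is organised more circuitously. The paper never performs a pairwise/$\mathcal{T}$ regrouping: it simply cuts $\mathbb{H}^n$ into $\{U_k>\sigma_{[k]}\}$ and $\{U_k\le\sigma_{[k]}\}$ (your $\Omega_k$ and $\Omega_k^c$) and on each piece adds and subtracts the first Taylor term about the larger of $U_k,\sigma_{[k]}$, producing four integrals $J_1,\dots,J_4$. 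The $\theta$-shift kills every error; the only genuinely three-bubble term is the difference $\sigma_{[k]}^p-\sum_{i\neq k}U_i^p$ in $J_3$, which is handled by one H\"older and \eqref{e1}. No ``explicit profile of $U$'' enters, and the only ``cancellation'' is the trivial Taylor identity; your first $\Omega_k$ paragraph is essentially $J_1+J_2$, and your pairwise bracket on $\{U_i>U_k\}$ reproduces $J_3+J_4$ for a single $i$.

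The part that is genuinely underspecified is your claim about $\mathcal{T}=g(W)-\sum_{i\neq k}g(U_i)$, $g(t)=(U_k+t)^p-U_k^p-pU_k^{p-1}t$. For non-integer $p$ this is not a polynomial in the $U_i$, so the assertion that it ``collects only mixed products involving three or more distinct bubbles and is therefore $o(\varepsilon^{(Q-2)/2})$ via \eqref{e1}'' needs an argument. One that works: on $\{U_j=\max_{i\neq k}U_i\}$ use convexity of $g$ to get $0\le\mathcal{T}\le g'(W)(W-U_j)\lesssim (U_k+U_j)^{p-1}\sum_{l\neq j,k}U_l$, which after testing against $|Z_k^{2n+2}|\lesssim U_k$ reduces to honest three-bubble integrals $\int U_j^{p-1}U_lU_k$; these are $o(\varepsilon^{(Q-2)/2})$ only after a further $\theta$-shift (using $U_l,U_k\lesssim U_j$ on that region) and H\"older. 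But that is exactly the analysis the paper carries out for $J_3$, so your detour through $\mathcal{T}$ does not bypass the work---it merely relocates it. If you fill this in, your argument is complete, though longer than the paper's.
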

\begin{proof}
  Denote $\sigma_{[k]}=\sum_{j\neq k }U_j$. Let's decompose $$\int_{\mathbb{H}^n } f Z_k^{2n+2}  \, d\xi=J_1+J_2+J_3+J_4,$$
  where
  \[ J_1=\int_{U_k>\sigma_{[k]}}[pU_k^{p-1}\sigma_{[k]}-\sum_{i\neq k}U_i^p] Z_k^{2n+2}  \, d\xi ,  \]
  \[J_2=\int_{U_k>\sigma_{[k]}}  [ \sigma^{p}-pU_k^{p-1}\sigma_{[k]}-U_k^p ]Z_k^{2n+2}  \, d\xi,\]
  \[ J_3=\int_{U_k\leq\sigma_{[k]}}  [ p\sigma_{[k]}^{p-1}U_k+\sigma_{[k]}^{p}-\sum_{i}U_i^p ]Z_k^{2n+2}  \, d\xi,\]
  \[J_4= \int_{U_k\leq\sigma_{[k]}}  [ \sigma^{p}-\sigma_{[k]}^{p}-p\sigma_{[k]}^{p-1}U_k ]Z_k^{2n+2}   \, d\xi.\]
By \eqref{e1} and  $|Z_{k}^{2n+2}|\lesssim U_k$, for some $\theta>0$ small enough,  we have
\begin{equation}\label{est-J1}
\begin{aligned}
\left| J_1 - p \int_{\mathbb{H}^n} U_k^{p-1} \sigma_{[k]} Z_k^{2n+2} \, d\xi \right|
&= \left| \int_{U_k > \sigma_{[k]}} \sum_{i\neq k} U_i^p Z_k^{2n+2} \, d\xi+ \int_{U_k \leq \sigma_{[k]}} p U_k^{p-1} \sigma_{[k]} Z_k^{2n+2} \, d\xi \right| \\
&\lesssim \sum_{i\neq k} \int_{\mathbb{H}^n} \left( U_i^{p-\theta} U_k^{1+\theta} + U_i^{1+\theta} U_k^{p-\theta}  \right)\, d\xi \\
&\lesssim o( \varepsilon^{\frac{Q-2}{2}}) .
\end{aligned}
\end{equation}

Next, for $J_2$, it holds that
\begin{equation}\label{est-J2}
\begin{aligned}
|J_2| 
&\lesssim \int_{U_k > \sigma_{[k]}} U_k^{p-1} \sigma_{[k]}^2  \, d\xi 
\lesssim \int_{\mathbb{H}^n} U_k^{p-\theta} \sigma_{[k]}^{1+\theta}  \, d\xi
\lesssim o( \varepsilon^{\frac{Q-2}{2}}) .
\end{aligned}
\end{equation}

Similarly, for $J_3$, we estimate
\begin{equation}\label{est-J3}
\begin{aligned}
|J_3| 
&\lesssim \int_{\mathbb{H}^n} \sigma_{[k]}^{p-\theta} U_k^{1+\theta}  \, d\xi
+ \int_{U_k \leq \sigma_{[k]}} \left( \sigma_{[k]}^p - \sum_{i\neq k} U_i^p \right) Z_k^{2n+2}  \, d\xi\\
&\lesssim o( \varepsilon^{\frac{Q-2}{2}}) + \sum_{\substack{i,j \\ i,j \neq k,\, i\neq j}} \int_{\mathbb{H}^n} U_i^{p-1} U_j U_k \, d\xi \\
&\lesssim o( \varepsilon^{\frac{Q-2}{2}}) ,
\end{aligned}
\end{equation}
where in the last inequality, we have used  \eqref{e1} and H\"{o}lder inequality.
Finally, for $J_4$, we obtain
\begin{equation}\label{est-J4}
\begin{aligned}
|J_4| 
&\lesssim \int_{U_k \leq \sigma_k} U_k^3 \sigma_{[k]}^{p-2}  \, d\xi
\lesssim \int_{\mathbb{H}^n} U_k^{1+\theta} \sigma_{[k]}^{p-\theta}  \, d\xi
\lesssim o( \varepsilon^{\frac{Q-2}{2}}) .
\end{aligned}
\end{equation}

Combining \eqref{est-J1}–\eqref{est-J4}, we complete the proof.
\end{proof}

 Consequently, we achieve that
 \begin{proposition}\label{main1}
 Under the assumption of Theorem \ref{thm1-2} and the notations in Section 3.1,
the following estimates hold:
   \begin{equation}
     \varepsilon^{\frac{Q-2}{2}} \lesssim  \|\rho\|_{\mathscr{D}^{1}}^2+\|h\|_{\mathscr{D}^{-1}}.
   \end{equation}
 \end{proposition}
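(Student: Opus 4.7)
My plan is to test the error equation \eqref{key} against the scaling direction $Z_{k^{*}}^{2n+2}$ of a carefully chosen bubble and extract the lower bound $\varepsilon^{(Q-2)/2}$ from the bubble-interaction term on the right-hand side. Two structural ingredients drive the argument. First, the non-degeneracy identity
$$-\Delta_{\mathbb{H}^{n}}Z_{k^{*}}^{2n+2}=pU_{k^{*}}^{p-1}Z_{k^{*}}^{2n+2},$$
obtained by differentiating \eqref{CRyamabe} in the scale $\lambda$ at $\lambda=1$ and then pushing forward by $\mathfrak{g}_{k^{*}}$, combined with the orthogonality \eqref{orthogonality}, will annihilate the linear part of the left-hand side. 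Second, Lemma \ref{fz} together with \eqref{e2} force $\int fZ_{k^{*}}^{2n+2}\,d\xi$ to have definite sign and size $\approx\varepsilon^{(Q-2)/2}$. I would take $k^{*}=j_{0}$, where $(i_{0},j_{0})$ is a pair realising $\varepsilon_{i_{0}j_{0}}=\varepsilon$ with $\lambda_{i_{0}}\leq\lambda_{j_{0}}$, so that \eqref{e2} applies directly to the critical pair.

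After testing, $\int(-\Delta_{\mathbb{H}^{n}}\rho)\,Z_{k^{*}}^{2n+2}\,d\xi$ and $\int pU_{k^{*}}^{p-1}\rho\,Z_{k^{*}}^{2n+2}\,d\xi$ both vanish via \eqref{orthogonality}, leaving on the left only the bubble-mixing error
$$-p\int_{\mathbb{H}^{n}}(\sigma^{p-1}-U_{k^{*}}^{p-1})\,\rho\,Z_{k^{*}}^{2n+2}\,d\xi.$$
Using $|Z_{k^{*}}^{2n+2}|\lesssim U_{k^{*}}$, standard pointwise inequalities for $|\sigma^{p-1}-U_{k^{*}}^{p-1}|$, H\"older, Sobolev embedding, and the interaction asymptotics \eqref{e1}, this term should be controlled by $C\varepsilon^{\beta}\|\rho\|_{\mathscr{D}^{1}}$ for some $\beta>(Q-2)/4$, whence Young's inequality turns it into $\|\rho\|_{\mathscr{D}^{1}}^{2}+o(\varepsilon^{(Q-2)/2})$. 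On the right-hand side, $|\int hZ_{k^{*}}^{2n+2}\,d\xi|\lesssim\|h\|_{\mathscr{D}^{-1}}$ since $\|Z_{k^{*}}^{2n+2}\|_{\mathscr{D}^{1}}\lesssim 1$. The nonlinear term $\int N(\rho)Z_{k^{*}}^{2n+2}\,d\xi$ is bounded by splitting the integration domain into $\{|\rho|\leq\sigma\}$, where $|N(\rho)|\lesssim\sigma^{p-2}\rho^{2}$ yields $\|\rho\|_{\mathscr{D}^{1}}^{2}$ via the Folland-Stein-Sobolev inequality, and $\{|\rho|>\sigma\}$, where $|N(\rho)|U_{k^{*}}\lesssim|\rho|^{p+1}$ yields $\|\rho\|_{\mathscr{D}^{1}}^{p+1}$; for $\delta$ small both are $\lesssim\|\rho\|_{\mathscr{D}^{1}}^{2}$.

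The decisive term is $\int fZ_{k^{*}}^{2n+2}\,d\xi$. Lemma \ref{fz} reduces it to $p\sum_{i\neq k^{*}}\int U_{k^{*}}^{p-1}U_{i}Z_{k^{*}}^{2n+2}\,d\xi+o(\varepsilon^{(Q-2)/2})$. For each $i$ with $\lambda_{i}\leq\lambda_{k^{*}}$, \eqref{e2} delivers a uniformly-signed (negative) contribution of magnitude $\approx\varepsilon_{ik^{*}}^{(Q-2)/2}$; in particular the summand $i=i_{0}$ contributes $\approx\varepsilon^{(Q-2)/2}$. For $i$ with $\lambda_{i}>\lambda_{k^{*}}$, $|Z_{k^{*}}^{2n+2}|\lesssim U_{k^{*}}$ and \eqref{e1} yield only an unsigned upper bound $O(\varepsilon_{ik^{*}}^{(Q-2)/2})$. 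Assembling all the pieces and absorbing the $\|\rho\|_{\mathscr{D}^{1}}^{2}$-terms delivers the desired inequality $\varepsilon^{(Q-2)/2}\lesssim\|\rho\|_{\mathscr{D}^{1}}^{2}+\|h\|_{\mathscr{D}^{-1}}$.

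The main obstacle I anticipate is the last combinatorial step: ensuring the unsigned contributions from indices with $\lambda_{i}>\lambda_{k^{*}}$ are strictly smaller than the leading clean term from $(i_{0},k^{*})$. I would address this with a refined choice of $(i_{0},j_{0})$---e.g.\ selected so that additionally $\varepsilon_{ij_{0}}=o(\varepsilon)$ for every $i$ with $\lambda_{i}>\lambda_{j_{0}}$, obtained by an inductive sorting argument on the scale hierarchy $\{\lambda_{i}\}_{i=1}^{m}$---or alternatively by testing against a weighted combination $\sum_{k}c_{k}Z_{k}^{2n+2}$ with coefficients adapted to the scales. Verifying that such a choice is always available and that the constants are uniform across every dimension $n\geq 1$ is the core technical point, mirroring the role of the concentration-compactness ingredient mentioned in the Remark after Theorem \ref{thm1-2}.
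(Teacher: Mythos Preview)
Your overall strategy---test \eqref{key} against $Z_{k}^{2n+2}$, kill the linear part via the orthogonality \eqref{orthogonality}, and use Lemma~\ref{fz} together with \eqref{e2} to extract the bubble interaction---is exactly the paper's. Two points, however, are not resolved in your write-up.

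\textbf{The combinatorial obstacle.} The concern you raise at the end is genuine and your proposed fixes do not work as stated. A single test at a clever $k^{*}$ can fail: take three bubbles with $\lambda_{1}>\lambda_{2}>\lambda_{3}$, $\varepsilon_{23}=\varepsilon$ and $\varepsilon_{12},\varepsilon_{13}$ tiny. Testing at $k^{*}=1$ (the only index where every summand in Lemma~\ref{fz} has the good sign) yields only $\varepsilon_{1}^{(Q-2)/2}\ll\varepsilon^{(Q-2)/2}$; testing at $k^{*}=2$ or $3$ reintroduces unsigned terms of size $\varepsilon^{(Q-2)/2}$. The request ``$\varepsilon_{ij_{0}}=o(\varepsilon)$'' is also ill-posed for a fixed configuration. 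The paper's resolution is the induction you allude to but do not carry out: sort $\lambda_{1}\geq\cdots\geq\lambda_{m}$ and first test at $i=1$, where \emph{every} $j\neq 1$ satisfies $\lambda_{j}\leq\lambda_{1}$, so \eqref{e2} gives all summands the same (negative) sign and one obtains $\varepsilon_{1}^{(Q-2)/2}\lesssim o(\varepsilon^{(Q-2)/2})+\|h\|_{\mathscr{D}^{-1}}+\|\rho\|_{\mathscr{D}^{1}}^{2}$. Then at $i=2$ the only unsigned term is $j=1$, and $|\int U_{2}^{p-1}U_{1}Z_{2}^{2n+2}|\lesssim\varepsilon_{12}^{(Q-2)/2}\leq\varepsilon_{1}^{(Q-2)/2}$ is already controlled; the remaining summands with $j\geq 3$ again share a sign, yielding $\varepsilon_{2}^{(Q-2)/2}\lesssim\text{RHS}$. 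Proceeding inductively gives $\varepsilon_{i}^{(Q-2)/2}\lesssim\text{RHS}$ for every $i$, hence $\varepsilon^{(Q-2)/2}\lesssim\text{RHS}$.

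\textbf{The cross-term estimate.} Your route ``H\"older $+$ Young'' for $\int(\sigma^{p-1}-U_{k}^{p-1})\rho\,Z_{k}^{2n+2}\,d\xi$ does not deliver $\beta>(Q-2)/4$ in all dimensions. For $p\leq 2$ the naive bound $|\sigma^{p-1}-U_{k}^{p-1}|\lesssim\sigma_{[k]}^{p-1}$ followed by H\"older and \eqref{e1} gives only $\beta=2$, and $\varepsilon^{4}=o(\varepsilon^{(Q-2)/2})$ fails once $Q\geq 10$, i.e.\ $n\geq 4$. The paper avoids Young altogether via a four-region pointwise split (cf.\ \eqref{4-111}): on $\{U_{k}>\sigma_{[k]},\,|\rho|>\sigma_{[k]}\}$ and $\{U_{k}\leq\sigma_{[k]},\,|\rho|>U_{k}\}$ one bounds the integrand by $\sigma^{p-1}\rho^{2}$ or $|\rho|^{p+1}$ directly, while on the complementary regions one gets $U_{k}^{p-\theta}\sigma_{[k]}^{1+\theta}$ or $U_{k}^{1+\theta}\sigma_{[k]}^{p-\theta}$ for small $\theta>0$, which integrate to $o(\varepsilon^{(Q-2)/2})$ by \eqref{e1}. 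This yields $o(\varepsilon^{(Q-2)/2})+\|\rho\|_{\mathscr{D}^{1}}^{2}$ uniformly in $n$.
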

 \begin{proof}
Testing equation \eqref{key} with $Z_{i}^{2n+2}$, we obtain
\begin{equation}\label{3-13}
\left| \int_{\mathbb{H}^n} f Z_i^{2n+2}  \, d\xi\right| 
\lesssim 
\left| \int_{\mathbb{H}^n} \sigma^{p-1} \rho Z_i^{2n+2}  \, d\xi \right| 
+ \|h\|_{\mathscr{D}^{-1}} 
+ \left|\int_{\mathbb{H}^n} N(\rho) Z_i^{2n+2} \, d\xi \right| .
\end{equation}

We first estimate the first term on the right-hand side of \eqref{3-13}.  
For some sufficiently small $\theta>0$, we have
\begin{equation}\label{4-111}
\begin{aligned}
| (\sigma^{p-1} - U_i^{p-1}) \rho Z_i^{2n+2} |
& \lesssim U_i | \sigma^{p-1} - U_i^{p-1} | |\rho| \\
& \lesssim
\begin{cases}
U_i^{p-1} |\rho|^2, & \text{if } U_i > \sigma_{[i]}, \, |\rho| > \sigma_{[i]}, \\
U_i^{p-\theta} \sigma_{[i]}^{1+\theta}, & \text{if } U_i > \sigma_{[i]}, \, |\rho| \leq \sigma_{[i]}, \\
|\rho|^{p+1} + \sigma_{[i]}^{p-1} \rho^2, & \text{if } U_i \leq \sigma_{[i]}, \, |\rho| > U_i, \\
U_i^{1+\theta} \sigma_{[i]}^{p-\theta}, & \text{otherwise}.
\end{cases}
\end{aligned}
\end{equation}
Thus, by using the \eqref{orthogonality} and \eqref{e1}, we obtain that 
\begin{equation}\label{4.111}
 \left| \int_{\mathbb{H}^n} \sigma^{p-1} \rho Z_i^{2n+2} \, d\xi \right|= \left| \int_{\mathbb{H}^n} (\sigma^{p-1}-U_i^{p-1}) \rho Z_i^{2n+2} \, d\xi \right|
\lesssim o( \varepsilon^{\frac{Q-2}{2}})+\|\rho\|_{\mathscr{D}^{1}}^2.
\end{equation}
For the third term on the right-hand side of \eqref{3-13}, we similarly have
\begin{equation}\label{NZ}
\left| N(\rho) Z_i^{2n+2} \right|
\lesssim
\begin{cases}
\sigma^{p-1} \rho^2, & \text{if } \sigma > |\rho|, \\
|\rho|^{p+1}, & \text{otherwise}.
\end{cases}
\end{equation}
Thus, combining  \eqref{3-13}--\eqref{NZ}, we deduce
\begin{equation}\label{fzi}
\left| \int_{\mathbb{H}^n} f Z_i^{2n+2} \, d\xi  \right|
\lesssim 
o( \varepsilon^{\frac{Q-2}{2}}) + \|h\|_{\mathscr{D}^{-1}} + \|\rho\|_{\mathscr{D}^{1}}^2.
\end{equation}

Without loss of generality, we assume $\lambda_1 \geq \lambda_2 \geq \cdots \geq \lambda_m$.  
For a fixed $i_0$, define
\[
\varepsilon_{i_0} := \max_{j \neq i_0} \{\varepsilon_{i_0j}\}.
\]
Then, by Lemma \ref{fz}, \eqref{e2} and   setting $i=1$ in \eqref{fzi}, we obtain
\begin{equation}\label{3.19}
\varepsilon_1^{\frac{Q-2}{2}}
\lesssim 
o( \varepsilon^{\frac{Q-2}{2}}) + \|h\|_{\mathscr{D}^{-1}} + \|\rho\|_{\mathscr{D}^{1}}^2.
\end{equation}
Next, taking $i=2$, we deduce from \eqref{fzi}, \eqref{3.19}  and Lemma \ref{fz} that
\begin{equation}
\begin{aligned}
\left| \sum_{i \neq 2} \int_{\mathbb{H}^n} U_2^{p-1} U_i Z_2^{2n+2} \right|
&\lesssim 
\left| \int_{\mathbb{H}^n} U_2^{p-1} U_1 Z_2^{2n+2} \right|
+ o( \varepsilon^{\frac{Q-2}{2}}) + \|h\|_{\mathscr{D}^{-1}} + \|\rho\|_{\mathscr{D}^{1}}^2 \\
&\lesssim 
o( \varepsilon^{\frac{Q-2}{2}}) + \|h\|_{\mathscr{D}^{-1}} + \|\rho\|_{\mathscr{D}^{1}}^2.
\end{aligned}
\end{equation}
Therefore, we conclude
\begin{equation}
\varepsilon_2^{\frac{Q-2}{2}}
\lesssim 
o( \varepsilon^{\frac{Q-2}{2}}) + \|h\|_{\mathscr{D}^{-1}} + \|\rho\|_{\mathscr{D}^{1}}^2.
\end{equation}
By induction, the desired result follows.
\end{proof}

 \subsection{The second missing estimate}
 Before giving the main estimate, we first recall the following fact:
 \begin{lemma}\label{compact}
 For non-negative function $v\in L^{\frac{Q}{2}}$, define
 \[L_v^2:=\{u\mid \|u\|_{L_v^2}^2=\int_{\mathbb{H}^n } u^2v\, d\xi<+\infty \}.\]
 Then the embedding from $\mathscr{D}^{1}$ to $L_v^{2}$ is compact.
 \end{lemma}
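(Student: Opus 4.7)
The plan is to prove compactness by a standard weak-to-strong convergence argument combined with a cutoff/density decomposition of the weight $v$. Take a bounded sequence $\{u_k\}\subset \mathscr{D}^{1}$; after extracting a subsequence we may assume $u_k \rightharpoonup u$ weakly in $\mathscr{D}^{1}$, so $\{u_k\}$ is also bounded in $L^{2_Q^*}$ by the Folland--Stein--Sobolev inequality. The goal is to show $u_k \to u$ in $L_v^{2}$. Note first that $L_v^{2}$ is well-defined as a subspace of $\mathscr{D}^{1}$ via H\"older with exponents $\tfrac{Q}{2}$ and $\tfrac{Q}{Q-2}$, since
\[
\int_{\mathbb{H}^n} w^2 v\, d\xi \leq \|v\|_{L^{Q/2}}\,\|w\|_{L^{2_Q^*}}^2 \lesssim \|v\|_{L^{Q/2}}\,\|w\|_{\mathscr{D}^{1}}^2.
\]

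The key step is to decompose the weight. Given $\epsilon>0$, since $v\in L^{Q/2}(\mathbb{H}^n)$ is non-negative, standard density considerations produce a splitting $v = v_1 + v_2$ with $v_1\in L^{\infty}(\mathbb{H}^n)$ compactly supported in some ball $B_R$ and $\|v_2\|_{L^{Q/2}}<\epsilon$. The $v_2$-part is controlled uniformly in $k$ by the H\"older estimate above:
\[
\int_{\mathbb{H}^n} (u_k-u)^2 v_2\, d\xi \lesssim \|v_2\|_{L^{Q/2}}\,\|u_k-u\|_{\mathscr{D}^{1}}^{2} \lesssim \epsilon.
\]
For the $v_1$-part, the main ingredient is the local Rellich--Kondrachov-type embedding on the Heisenberg group: for any bounded open set $\Omega \subset \mathbb{H}^n$, the embedding $\mathscr{D}^{1}\hookrightarrow L^{q}(\Omega)$ is compact for $q<2_Q^*$ (this follows from the subelliptic Poincar\'e inequality combined with the sub-Riemannian Sobolev embedding, see e.g.\ Folland--Stein or Garofalo--Nhieu). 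Applying this with $\Omega = B_R$ and $q=2$ yields $u_k\to u$ strongly in $L^2(B_R)$, and hence
\[
\int_{\mathbb{H}^n} (u_k-u)^2 v_1\, d\xi \leq \|v_1\|_{L^{\infty}} \int_{B_R} (u_k-u)^2\, d\xi \to 0.
\]
Combining the two estimates, $\limsup_{k\to\infty}\|u_k-u\|_{L_v^2}^2 \lesssim \epsilon$, and sending $\epsilon\to 0$ finishes the proof.

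The only mildly nontrivial step is invoking the local Rellich--Kondrachov compactness on $\mathbb{H}^n$; this is well established in the Heisenberg/Carnot group literature and will just be cited. Everything else is an interpolation-plus-cutoff argument, so no serious technical obstacle is anticipated.
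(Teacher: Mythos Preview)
Your proposal is correct and follows the standard decomposition-plus-local-compactness argument; this is precisely the approach of Proposition~A.1 in Figalli--Glaudo \cite{Figalli2020}, which the paper simply cites without reproducing. No gap.
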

 \begin{proof}
The proof is essentially the same as that of Proposition A.1 in \cite{Figalli2020}, so we omit it.
 \end{proof}

As a consequence, we obtain the following result:

\begin{lemma}\label{main2}
There exist constant $\hat{\delta}=\hat{\delta}(n,m)> 0$ such that for any collection of $m$ $\hat{\delta}$-weakly interacting Jerison–Lee bubbles $\{\mathfrak{g}_{\lambda_i, \xi_i } U\}_{i=1}^m$, if $\rho$ satisfies the equation
\begin{equation}
  -\Delta_{\mathbb{H}^n} \rho - p \sigma^{p-1} \rho = \varphi, \quad (\rho, Z_{i}^{a})_{\mathscr{D}^{1}} = 0,~\forall\, 1\leq i\leq m,\ 1\leq a\leq 2n+2,
\end{equation}
then the following estimate holds:
\begin{equation}
  \|\rho\|_{\mathscr{D}^{1}} \lesssim \|\varphi\|_{\mathscr{D}^{-1}}.
\end{equation}
\end{lemma}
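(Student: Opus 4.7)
The plan is to argue by contradiction and a concentration-compactness analysis, exploiting the gauge invariance and the non-degeneracy of the bubble. Assume the lemma fails. Then there exist sequences of $\delta_k$-weakly interacting bubbles $\{\mathfrak{g}_{\lambda_i^{(k)},\xi_i^{(k)}} U\}_{i=1}^m$ with $\delta_k \to 0$, and functions $\rho_k,\varphi_k$ satisfying the linearized equation and orthogonality conditions, with
$\|\rho_k\|_{\mathscr{D}^{1}}=1$ and $\|\varphi_k\|_{\mathscr{D}^{-1}}\to 0$. Write $\sigma_k=\sum_{i=1}^m U_i^{(k)}$ where $U_i^{(k)}=\mathfrak{g}_{\lambda_i^{(k)},\xi_i^{(k)}} U$.

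For each fixed $i\in\{1,\dots,m\}$, set $\tilde{\rho}_i^{(k)}:=\mathfrak{g}_i^{-1}\rho_k$; by gauge invariance $\|\tilde{\rho}_i^{(k)}\|_{\mathscr{D}^1}=1$, so (up to a subsequence) $\tilde{\rho}_i^{(k)}\rightharpoonup \rho_i^{\infty}$ in $\mathscr{D}^1$. Pulling the equation back by $\mathfrak{g}_i^{-1}$ and using the symmetry identities \eqref{sym1}--\eqref{sym4}, the right-hand side becomes a negligible term in $\mathscr{D}^{-1}$. The potential $p\sigma_k^{p-1}$ transforms into $p(U+\sum_{j\neq i}\mathfrak{g}_i^{-1}U_j^{(k)})^{p-1}$. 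By the remark following Lemma \ref{gg}, $\mathfrak{g}_i^{-1}\mathfrak{g}_j^{(k)}\rightharpoonup 0$ whenever $\varepsilon_{ij}^{(k)}\to 0$, so the cross-bubble pieces disappear weakly, and passing to the limit (using the compactness of $\mathscr{D}^1\hookrightarrow L^2_{U^{p-1}}$ from Lemma \ref{compact} to handle the potential term) yields
\[
-\Delta_{\mathbb{H}^n}\rho_i^{\infty}-pU^{p-1}\rho_i^{\infty}=0.
\]
The orthogonality conditions $(\rho_k,Z_i^{a,(k)})_{\mathscr{D}^1}=0$ pull back to $(\tilde{\rho}_i^{(k)},Z^a)_{\mathscr{D}^1}=0$, which pass to the limit. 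By the non-degeneracy Lemma \ref{nonde}, $\rho_i^{\infty}$ is a linear combination of the $Z^a$, and together with the orthogonality we conclude $\rho_i^{\infty}=0$ for every $i$.

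To reach a contradiction, test the equation against $\rho_k$ itself:
\[
1=\|\rho_k\|_{\mathscr{D}^1}^2=p\int_{\mathbb{H}^n}\sigma_k^{p-1}\rho_k^2\,d\xi+(\varphi_k,\rho_k)_{\mathscr{D}^1,\mathscr{D}^{-1}}.
\]
The second summand is $o(1)$ since $\|\varphi_k\|_{\mathscr{D}^{-1}}\to 0$. For the first, use $\sigma_k^{p-1}\lesssim \sum_{i=1}^m (U_i^{(k)})^{p-1}$ (valid both when $p-1\leq 1$ by subadditivity and when $p-1\geq 1$ by convexity) to reduce to estimating $\int (U_i^{(k)})^{p-1}\rho_k^2\,d\xi=\int U^{p-1}(\tilde{\rho}_i^{(k)})^2\,d\xi$. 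Since $U^{p-1}\in L^{Q/2}$ and $\tilde{\rho}_i^{(k)}\rightharpoonup 0$, Lemma \ref{compact} forces this integral to zero as $k\to\infty$. Hence the right-hand side tends to $0$, contradicting the value $1$ on the left.

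The main obstacle is the second paragraph: rigorously passing to the weak limit in the linearized equation when all $m$ bubbles are simultaneously concentrating at different scales/locations. The key technical input is the weak convergence $\mathfrak{g}_i^{-1}\mathfrak{g}_j^{(k)}\rightharpoonup 0$ for $i\neq j$ provided by the weak-interaction hypothesis (Lemma \ref{gg}), which allows us to isolate a single bubble at a time. Once each weak limit is identified as trivial via non-degeneracy, the compactness Lemma \ref{compact} converts weak convergence into strong convergence in the weighted space needed to defeat the potential term.
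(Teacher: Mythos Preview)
Your proposal is correct and follows essentially the same approach as the paper's proof: a contradiction argument with $\|\rho_k\|_{\mathscr{D}^1}=1$ and $\|\varphi_k\|_{\mathscr{D}^{-1}}\to 0$, gauge pull-back by each $\mathfrak{g}_i^{-1}$, identification of the weak limit as $0$ via the non-degeneracy Lemma~\ref{nonde} together with the transported orthogonality, and finally testing against $\rho_k$ combined with the compact embedding of Lemma~\ref{compact} to derive the contradiction. The paper spells out in more detail the step you flag as the main obstacle---showing that the cross-bubble contributions $\int\big[(U+\sum_{j\neq i}\mathfrak{g}_i^{-1}U_j^{(k)})^{p-1}-U^{p-1}\big]\tilde\rho_i^{(k)}\psi\to 0$ for each test function $\psi$---but your outline identifies exactly the right ingredients (Lemma~\ref{gg} and Lemma~\ref{compact}) for this.
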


\begin{proof}
  Otherwise, there exist $\delta_k$-weakly interacting  Jerison-Lee bubbles $\{\mathfrak{g}_i^{(k)}U\}_{i=1}^m $ such that $\delta_k  \to 0$, $\varphi_k=o_{k}(1)$ in $\mathscr{D}^{-1}$, and $\rho_k$ with $\|\rho_k \|_{\mathscr{D}^{1}}=1$ such that
    \begin{equation}\label{3-22}
  -\Delta_{\mathbb{H}^n} \rho_k -p\sigma_k^{p-1}\rho_k =\varphi_k, ~(\rho_k, Z_{i,k}^{a})_{\mathscr{D}^{1}}=0,
  \end{equation}
  where  $$\mathfrak{g}_i^{(k)}U:=\mathfrak{g}_{\lambda_i^{(k)}, \xi_i^{(k)}} U, \quad\sigma_k= \sum_{i=1}^{m} \mathfrak{g}_i^{(k)} U, \quad Z_{i,k}^{a} = \mathfrak{g}_i^{(k)} Z^a.$$ 
  
  We claim that, up to a subsequence 
  \begin{equation}\label{3.25}
  (\mathfrak{g}_1^{(k)})^{-1}\rho_k \rightharpoonup 0 \mbox{ in } \mathscr{D}^1 \mbox{ and  } (\mathfrak{g}_1^{(k)})^{-1}\rho_k \to 0  \mbox{ in } L_{U^{p-1}}^2 \mbox{ as } k\to+\infty. 
  \end{equation}
One can verify that
    \begin{equation}\label{1.4}
    \begin{aligned}
  -\Delta_{\mathbb{H}^n}  (\mathfrak{g}_1^{(k)})^{-1} \rho_k -&p\left (U+\sum_{j> 1}   (\mathfrak{g}_1^{(k)})^{-1} \mathfrak{g}_j^{(k)} U \right)^{p-1}(\mathfrak{g}_1^{(k)})^{-1} \rho_k\\& = (\lambda_{1}^{(k)})^{-2} (\mathfrak{g}_1^{(k)})^{-1} \varphi_k=o_k(1)\in \mathscr{D}^{-1}.
  \end{aligned}
  \end{equation}
  Since the bubbles $\{(\mathfrak{g}_1^{(k)})^{-1} \mathfrak{g}_j^{(k)} U \}_{j=1}^{m}$ are also $\delta_k$-weakly interacting,
\eqref{1.4} yields that,
 without loss of generality, we can assume $\mathfrak{g}_1^{(k)}= \mathrm{id}$ for convenience when proving \eqref{3.25}. Hence the equation becomes
   \begin{equation}
    \begin{aligned}
  -\Delta_{\mathbb{H}^n}   \rho_k -&p(U+\sum_{j> 1}  \mathfrak{g}_j^{(k)}U )^{p-1} \rho_k =o_k(1)\in \mathscr{D}^{-1}.
  \end{aligned}
  \end{equation}
Because $\|\rho_k \|_{\mathscr{D}^{1}}=1$, via Lemma \ref{compact}, up to a subsequence, we assume that
$$\rho_{k} \rightharpoonup \rho \mbox{ in } \mathscr{D}^1 \mbox{ and  } \rho_k \to \rho \mbox{ in } L_{U^{p-1}}^2.$$
For  test function $\psi\in C_c^{\infty}$, we have
\begin{equation*}
\begin{aligned}
&\left| \int_{\mathbb{H}^n} \Big{[} ( U + \sum_{j > 1} \mathfrak{g}_j^{(k)} U )^{p-1} - U^{p-1} \Big{]} \rho_k \psi \, d\xi \right| \\
&\quad \lesssim \sum_{j > 1} \left( \int_{\mathbb{H}^n} U^{p-2} \, \mathfrak{g}_j^{(k)} U \, |\rho_k| \, |\psi| \, d\xi 
+ \int_{\mathbb{H}^n} (\mathfrak{g}_j^{(k)} U)^{p-1} |\rho_k| \, |\psi| \, d\xi \right).
\end{aligned}
\end{equation*}
Because these bubbles are $\delta_k$-weakly interacting with $\delta_k\to 0$, it follows that $\mathfrak{g}_j^{(k)} \rightharpoonup 0$. Thus, up to a subsequence again, we assume $$\mathfrak{g}_j^{(k)}U ,~(\mathfrak{g}_j^{(k)})^{-1}\psi \to 0 \mbox{ strongly in } L_{U^{p-1}}^2$$
Since the functions $\dfrac{|\rho_k|\, |\psi|}{U}$ and $(\mathfrak{g}_j^{(k)})^{-1} \rho_k$ are bounded in $L_{U^{p-1}}^2$, we deduce that
\[
\int_{\mathbb{H}^n} U^{p-1} \, \mathfrak{g}_j^{(k)} U \cdot \frac{|\rho_k|\, |\psi|}{U} \, d\xi \to 0,
\]
and
\begin{equation}
\int_{\mathbb{H}^n} |\mathfrak{g}_j^{(k)} U|^{p-1} |\rho_k|\, |\psi| \, d\xi 
= \int_{\mathbb{H}^n} U^{p-1} \left| (\mathfrak{g}_j^{(k)})^{-1} \rho_k \right| \cdot \left| (\mathfrak{g}_j^{(k)})^{-1} \psi \right| \, d\xi \to 0.
\end{equation}
Therefore, by letting $k \to +\infty$, we obtain
 \[-\Delta_{\mathbb{H}^n}   \rho -pU^{p-1} \rho =0 \]
 in the sense of distribution. On the other hand,
 since $$(\rho, Z^{a})_{\mathscr{D}^1}=\lim_{k\to\infty} ((\mathfrak{g}_1^{(k)})^{-1}\rho_k, Z^{a})_{\mathscr{D}^1}=\lim_{k\to\infty} (\rho_k, Z_{1,k}^{a})_{\mathscr{D}^1}=0,$$ it follows from Lemma \ref{nonde} that
  $\rho=0$. Therefore, we prove the claim \eqref{3.25}.
  
 Similarly,  up to a subsequence, for each $i$,  $$(\mathfrak{g}_i^{(k)})^{-1}\rho_k \rightharpoonup 0 \mbox{ in }   \mathscr{D}^1, \quad (\mathfrak{g}_i^{(k)})^{-1}\rho_k \to 0 \mbox{ in } L_{U^{p-1}}^2.$$
 Now test the equation \eqref{3-22} by $\rho_k$, we have
 \begin{equation}
 \begin{aligned}
 \| \rho_k\|_{\mathscr{D}^1}^2 &\lesssim \sum_{i=1}^{m}\int_{\mathbb{H}^n}  ( \mathfrak{g}_i^{(k)} U )^{p-1} \rho_k^2 \,d\xi +o_k(1)\\&
= \sum_{i=1}^{m} \int_{\mathbb{H}^n}  U^{p-1} |  (\mathfrak{g}_i^{(k)})^{-1}\rho_k|^2  \,d\xi+o_k(1)\\&
=o_k(1).
 \end{aligned}
 \end{equation}
This contradicts the fact that $ \| \rho_k\|_{\mathscr{D}^1}=1$.
  \end{proof}

We have now provided all the previously missing estimates in the proof of Theorem \ref{thm1-2}.
\section{Proof of Theorem \ref{thmsharp}}
As a consequence of Lemma \ref{compact}, for given Jerison-Lee bubbles, the operator
\[ K:= p(-\Delta_{\mathbb{H}^n })^{-1}\sigma^{p-1}: \mathscr{D}^1 \to \mathscr{D}^1   \]
is compact. Define
$$\mathscr{F} := \text{span}\left\{ Z_{i}^a \mid 1 \leq a \leq 2n+2, 1 \leq i \leq m \right\},$$
and let $\{e_{k}\}_{1 \leq k \leq m(2n+2)}$ be an orthonormal basis of $\mathscr{F}$.
Denote by \( P_{\mathscr{F}^\perp} \) (respectively \( P_{\mathscr{F}} \)) the projection from \(\mathscr{D}^1  \) onto \( \mathscr{F}^\perp \) (respectively \( \mathscr{F} \)). Then,  

\begin{lemma}\label{sharp1}
By choosing a smaller $\hat{\delta}$ in Lemma \ref{main2}, for any $\hat{\delta}$-weakly interacting Jerison–Lee bubbles, we can find a solution $\rho\in \mathscr{F}^\perp$ to equation:
\begin{equation}\label{3.45}
\begin{aligned}
  -\Delta_{\mathbb{H}^n} \rho- p\sigma^{p-1}\rho = \Delta_{\mathbb{H}^n} P_{\mathscr{F}^\perp} \Delta_{\mathbb{H}^n}^{-1} (f+N(\rho)) -p \sum_{k=1}^{m(2n+2)}\int_{\mathbb{H}^n}  \sigma^{p-1} \rho e_{k} \, d\xi  \, \cdot\Delta_{\mathbb{H}^n}e_{k}.
  \end{aligned}
  \end{equation}
In particular, we have
  \begin{equation}
  \begin{aligned}
   \|\rho\|_{\mathscr{D}^1}^2\approx \|P_{\mathscr{F}^\perp} \Delta_{\mathbb{H}^n}^{-1} f\|_{\mathscr{D}^1}^2
=\| f\|_{\mathscr{D}^{-1}}^2-\|P_{\mathscr{F}} \Delta_{\mathbb{H}^n}^{-1} f\|_{\mathscr{D}^1}^2 .
   \end{aligned}
  \end{equation}
\end{lemma}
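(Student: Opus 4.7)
The plan is to solve \eqref{3.45} by a Banach contraction argument in $\mathscr{F}^\perp$, with Lemma \ref{main2} providing the underlying linear theory. First I would recast \eqref{3.45} as an equation in $\rho \in \mathscr{F}^\perp$. Observe that $\Delta_{\mathbb{H}^n}P_{\mathscr{F}^\perp}\Delta_{\mathbb{H}^n}^{-1}$ is a projection of $\mathscr{D}^{-1}$ onto $(-\Delta_{\mathbb{H}^n})\mathscr{F}^\perp$ along $(-\Delta_{\mathbb{H}^n})\mathscr{F}$, and that $\sum_{k} \langle \sigma^{p-1}\rho, e_k\rangle_{L^2}(-\Delta_{\mathbb{H}^n})e_k$ is exactly the $(-\Delta_{\mathbb{H}^n})\mathscr{F}$-component of $\sigma^{p-1}\rho$ (since $(w,e_k)_{\mathscr{D}^1}=\langle -\Delta_{\mathbb{H}^n} w, e_k\rangle_{L^2}$ reproduces the coefficients of $P_{\mathscr{F}}(-\Delta_{\mathbb{H}^n})^{-1}(\sigma^{p-1}\rho)$). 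Applying $(-\Delta_{\mathbb{H}^n})^{-1}$ to both sides of \eqref{3.45} and projecting onto $\mathscr{F}^\perp$ then yields the equivalent fixed-point formulation
\[
\mathcal{M}\rho \;:=\; \rho \;-\; p\,P_{\mathscr{F}^\perp}(-\Delta_{\mathbb{H}^n})^{-1}(\sigma^{p-1}\rho) \;=\; P_{\mathscr{F}^\perp}(-\Delta_{\mathbb{H}^n})^{-1}\bigl(f + N(\rho)\bigr).
\]

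Next I would prove that $\mathcal{M}:\mathscr{F}^\perp \to \mathscr{F}^\perp$ is a linear isomorphism with $\|\mathcal{M}^{-1}\|$ bounded uniformly across all $\hat\delta$-weakly interacting bubble configurations, for $\hat\delta$ small enough. Since $\sigma^{p-1}\in L^{Q/2}$, Lemma \ref{compact} gives that $K\rho := p\,P_{\mathscr{F}^\perp}(-\Delta_{\mathbb{H}^n})^{-1}(\sigma^{p-1}\rho)$ is compact on $\mathscr{F}^\perp$, so $\mathcal{M} = I - K$ is Fredholm of index zero. For injectivity, if $\mathcal{M}\rho = 0$ then applying $(-\Delta_{\mathbb{H}^n})$ gives
\[
L\rho \;=\; -p\sum_{k}\langle\sigma^{p-1}\rho, e_k\rangle_{L^2}\,(-\Delta_{\mathbb{H}^n})e_k ,
\]
and the orthogonality $(\rho, e_k)_{\mathscr{D}^1}=0$ combined with $-\Delta_{\mathbb{H}^n}e_k = pU_i^{p-1}e_k$ (Lemma \ref{nonde}) yields $\int U_i^{p-1}\rho\, e_k \, d\xi = 0$, so $\langle\sigma^{p-1}\rho, e_k\rangle_{L^2} = \int(\sigma^{p-1}-U_i^{p-1})\rho\,e_k \, d\xi = o_{\hat\delta}(\|\rho\|_{\mathscr{D}^1})$ via the weak-interaction estimates \eqref{e1}. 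Lemma \ref{main2} then forces $\rho = 0$; Fredholm theory delivers bijectivity, and a blow-up argument parallel to the proof of Lemma \ref{main2} upgrades this to a uniform inverse bound.

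With $\mathcal{M}^{-1}$ at hand, I would define $\Psi(\rho) := \mathcal{M}^{-1}P_{\mathscr{F}^\perp}(-\Delta_{\mathbb{H}^n})^{-1}(f + N(\rho))$ on the ball $B_R = \{\rho \in \mathscr{F}^\perp : \|\rho\|_{\mathscr{D}^1}\le R\}$ with $R = C\|f\|_{\mathscr{D}^{-1}}$ for a suitably large $C$. The pointwise bound $|N(\rho)| \lesssim \sigma^{p-2}\rho^2 + |\rho|^p$ (for $p > 2$) or $|N(\rho)|\lesssim |\rho|^p$ (for $1 < p \le 2$), combined with H\"older and the Folland--Stein embedding, gives $\|N(\rho)\|_{\mathscr{D}^{-1}}\lesssim \|\rho\|_{\mathscr{D}^1}^{\min\{2,p\}}$ along with the corresponding Lipschitz estimate. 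By Lemma \ref{fh-1}, shrinking $\hat\delta$ shrinks $\|f\|_{\mathscr{D}^{-1}}$ (and hence $R$) to the point where $R^{\min\{2,p\}} \ll R$, so $\Psi$ maps $B_R$ to itself as a strict contraction, producing the desired $\rho$. The asserted $\|\rho\|_{\mathscr{D}^1}^2 \approx \|P_{\mathscr{F}^\perp}\Delta_{\mathbb{H}^n}^{-1} f\|_{\mathscr{D}^1}^2$ follows from combining $\|\rho\|_{\mathscr{D}^1}\approx \|\mathcal{M}\rho\|_{\mathscr{D}^1}$ (uniform invertibility) with $\|P_{\mathscr{F}^\perp}(-\Delta_{\mathbb{H}^n})^{-1}N(\rho)\|_{\mathscr{D}^1} \lesssim \|\rho\|_{\mathscr{D}^1}^{\min\{2,p\}} = o(\|\rho\|_{\mathscr{D}^1})$, so that $\|\rho\|_{\mathscr{D}^1} \approx \|P_{\mathscr{F}^\perp}(-\Delta_{\mathbb{H}^n})^{-1} f\|_{\mathscr{D}^1}$. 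The final equality is the Pythagorean theorem in $\mathscr{D}^1$: $(-\Delta_{\mathbb{H}^n})^{-1}:\mathscr{D}^{-1}\to \mathscr{D}^1$ is an isometry and the decomposition $(-\Delta_{\mathbb{H}^n})^{-1}f = P_{\mathscr{F}}(-\Delta_{\mathbb{H}^n})^{-1}f + P_{\mathscr{F}^\perp}(-\Delta_{\mathbb{H}^n})^{-1}f$ is $\mathscr{D}^1$-orthogonal. The main obstacle will be extracting the smallness $\langle\sigma^{p-1}\rho, e_k\rangle_{L^2} = o_{\hat\delta}(\|\rho\|_{\mathscr{D}^1})$ on $\mathscr{F}^\perp$ to close the uniform injectivity argument for $\mathcal{M}$; this is precisely where the combination of the orthogonality conditions with the interaction estimates of Section 2.2 becomes essential.
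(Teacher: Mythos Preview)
Your approach matches the paper's: both recast \eqref{3.45} as the fixed-point problem $(\mathrm{id}-P_{\mathscr{F}^\perp}K)\rho = -P_{\mathscr{F}^\perp}\Delta_{\mathbb{H}^n}^{-1}(f+N(\rho))$ in $\mathscr{F}^\perp$, prove uniform invertibility of $\mathrm{id}-P_{\mathscr{F}^\perp}K$ via Lemma \ref{main2} combined with Fredholm's alternative, and then close with a contraction argument using $\|N(\rho)\|_{\mathscr{D}^{-1}}\lesssim\|\rho\|_{\mathscr{D}^1}^{\min\{2,p\}}$; the norm equivalence and the Pythagorean identity are obtained exactly as you describe.

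Two small points are worth correcting. First, the identity $-\Delta_{\mathbb{H}^n}e_k = pU_i^{p-1}e_k$ is not true as written: each $e_k$ is a Gram--Schmidt combination of the $Z_j^a$ across possibly several indices $j$, so there is no single $U_i$. The paper handles this by noting that, under weak interaction, the basis change is nearly trivial, so $\sum_k\bigl|\int\sigma^{p-1}\rho\,e_k\bigr| \approx \sum_{i,a}\bigl|\int\sigma^{p-1}\rho\,Z_i^a\bigr|$, and then uses $(\rho,Z_i^a)_{\mathscr{D}^1}=0\Rightarrow\int U_i^{p-1}\rho\,Z_i^a=0$ directly to reduce to $\int(\sigma^{p-1}-U_i^{p-1})\rho\,Z_i^a = o_{\hat\delta}(\|\rho\|_{\mathscr{D}^1})$. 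Second, no separate blow-up argument is needed for the uniform inverse bound: applying Lemma \ref{main2} to the equation $(\mathrm{id}-P_{\mathscr{F}^\perp}K)\rho=\psi$ with general $\psi\in\mathscr{F}^\perp$ already gives $\|\rho\|_{\mathscr{D}^1}\le C_0\|\psi\|_{\mathscr{D}^1}$ with $C_0$ independent of the bubble configuration, which is slightly more direct than your injectivity-then-blow-up route.
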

\begin{proof}
We first prove that the  Fredholm operator
 $$\mathrm{id}-P_{\mathscr{F}^\perp}K:   \mathscr{F}^\perp\to\mathscr{F}^\perp $$ 
 is uniformly bounded and invertible; that is, there exists a constant $C_0>0$, independent of the choice of  $\hat{\delta}$-weakly interacting Jerison–Lee bubbles, such that 
 \begin{equation}\label{3.22222}
   \frac{1}{C_0}\leq\|(\mathrm{id}-P_{\mathscr{F}^\perp}K)^{-1}\|_{\mathscr{L}(\mathscr{F}^\perp,\mathscr{F}^\perp )} \leq C_0.
 \end{equation}
The boundedness is obvious, so we focus on proving uniform invertibility. 
Suppose $\rho, \psi\in \mathscr{F}^\perp$ satisfy
\[(\mathrm{id}-P_{\mathscr{F}^\perp}K)\rho=\psi. \] 
Then, we have
\[ -\Delta_{\mathbb{H}^n} \rho- p\sigma^{p-1}\rho = -\Delta_{\mathbb{H}^n} \psi  -p \sum_{k=1}^{m(2n+2)}\Big{(}\int_{\mathbb{H}^n}  \sigma^{p-1} \rho e_{k} \, d\xi  \, \Big{)}\Delta_{\mathbb{H}^n}e_{k}. \]
By Lemma~\ref{main2}, it follows that 
\begin{equation}
\|\rho\|_{\mathscr{D}^1 }\lesssim  \|\Delta_{\mathbb{H}^n} \psi\|_{\mathscr{D}^{-1} }+
\sum_{k=1}^{m(2n+2)} |\int_{\mathbb{H}^n}  \sigma^{p-1} \rho e_{k} \, d\xi |
\end{equation}
Since $\{e_{k}\}$ can be obtained by Schmidt orthogonalization of \( \{ Z_i^{a} \} \), and $ \rho\in \mathscr{F}^\perp$, we have
\begin{equation}\label{3-36}
\begin{aligned}
\sum_{k=1}^{m(2n+2)} |\int_{\mathbb{H}^n} \sigma^{p-1} \rho e_{k} \, d\xi| &\approx \sum_{i,a} |\int_{\mathbb{H}^n} (\sigma^{p-1}-Q_i^{p-1}) \rho Z_i^a \, d\xi|\\&
\lesssim
o_{\hat{\delta}}( \|\rho\|_{\mathscr{D}^1}).
\end{aligned}
\end{equation}
Therefore, for $\hat{\delta}$ sufficiently small, there exists a constant $C_0 > 0$, independent of the bubble configuration, such that
\begin{equation}
\|\rho\|_{\mathscr{D}^1 }\leq   C_0\| \psi\|_{\mathscr{D}^{1} }.
\end{equation}
Finally, Fredholm’s alternative implies the uniform invertibility.

Observe that equation \eqref{3.45} is equivalent to solving
  \begin{equation}\label{3-29}
  (\mathrm{id}-P_{\mathscr{F}^\perp}K)\rho=-P_{\mathscr{F}^\perp}\Delta_{\mathbb{H}^n}^{-1} (f+N(\rho)) \mbox{ in } \mathscr{F}^\perp.
  \end{equation}
By Lemma \ref{fh-1}, we have the estimate
$$\|P_{\mathscr{F}^\perp} \Delta_{\mathbb{H}^n}^{-1} f\|_{\mathscr{D}^1}\lesssim\|f\|_{\mathscr{D}^{-1}}\leq C_1\hat{\delta} ,$$
for some constant $C_1>0$. Recall that the nonlinearity $N(\rho)$ satisfies
\begin{equation*}
   |N(\rho)| \lesssim
    \begin{cases}
    \sigma^{p-2}\rho^2+|\rho|^p, & \mbox{if } p>2 ,\\
    |\rho|^p, & \mbox{if } 1<p\leq2,
  \end{cases}
  \end{equation*}
  which yields
  \begin{equation}\label{Np}
  \|P_{\mathscr{F}^\perp} \Delta_{\mathbb{H}^n}^{-1} N(\rho)\|_{\mathscr{D}^1}\leq C_2 \|\rho\|_{\mathscr{D}^1}^{\min\{p,2\}},
  \end{equation}
for some contant $C_2 >0$. 
We define the operator $$\mathrm{T}(\rho):= -(\mathrm{id}-P_{\mathscr{F}^\perp}K)^{-1} P_{\mathscr{F}^\perp} \Delta_{\mathbb{H}^n}^{-1} (f+N(\rho)).$$
Then, there exists a sufficiently small quantity   $c(\hat{\delta})=o_{\hat{\delta}}(1)$ such that $$C_0C_1\hat{\delta}+C_0C_2 c(\hat{\delta})^{\min\{p,2\}} <c(\hat{\delta}),$$ 
which ensures that the mapping $\mathrm{T}$ leaves the ball
$$\mathrm{B}=\{\rho\in \mathscr{F}^\perp \mid \|\rho\|_{\mathscr{D}^1}\leq c(\hat{\delta})\}$$
invariant; that is, $\mathrm{T}(\mathrm{B})\subset \mathrm{B}$. Moreover,
for $\rho_1, ~\rho_2\in \mathrm{B},$ we have
\begin{equation}
\begin{aligned}
 \|\mathrm{T}(\rho_1)-\mathrm{T}(\rho_2) \|_{\mathscr{D}^1}&\lesssim  \|   N(\rho_1)-N(\rho_2)\|_{\mathscr{D}^{-1}}\\&
 \lesssim
 \|   N(\rho_1)-N(\rho_2)\|_{L^{\frac{2Q}{Q+2}}}\\&
 \lesssim  o_{\hat{\delta}}(1)\| \rho_1-\rho_2 \|_{\mathscr{D}^1} .
 \end{aligned}
\end{equation}
Thus, by applying the contraction mapping principle, we can solve equation \eqref{3-29}.  In particular, 
\begin{equation}
\begin{aligned}
\|\rho\|_{\mathscr{D}^1} &\approx  \|P_{\mathscr{F}^\perp} \Delta_{\mathbb{H}^n}^{-1} (f+N(\rho))\|_{\mathscr{D}^1}\\&
\approx  \|P_{\mathscr{F}^\perp} \Delta_{\mathbb{H}^n}^{-1} f\|_{\mathscr{D}^1}.
\end{aligned}
\end{equation}
\end{proof}

Now we consider the function $\rho$ obtained from Lemma \ref{sharp1}. By Lemma \ref{fh-1}, we have
\begin{equation}\label{3-33}
 \|\rho\|_{\mathscr{D}^1}\lesssim \|f\|_{\mathscr{D}^{-1}}
  \lesssim \begin{cases}
             \varepsilon^{\frac{Q+2}{4}}, & \mbox{if } n\geq 3,\\
             \varepsilon^2|\log \varepsilon|^{\frac{1}{2}}, & \mbox{if } n=2,\\
              \varepsilon, & \mbox{if } n=1.
           \end{cases}
  \end{equation}

 Furthermore, the following proposition holds.
\begin{proposition}\label{999}
Let $\sigma(z,t)= U(z,t)+U(z,t+\frac{1}{\varepsilon})$ with $0<\varepsilon<\hat{\delta}$. For $\rho$ obtained from Lemma \ref{sharp1}, the function $u=\sigma+\rho$ satisfies
  \begin{equation}
\|\rho\|_{\mathscr{D}^1}\gtrsim
\begin{cases}
\|\Delta_{\mathbb{H}^n}u+|u|^{\frac{2}{n}}u \|_{\mathscr{D}^{-1}}|\log  \|\Delta_{\mathbb{H}^n}u+|u|^{\frac{2}{n}}u \|_{\mathscr{D}^{-1}}|^{\frac{1}{2}}, & \mbox{if } n=2 ,\\
\|\Delta_{\mathbb{H}^n}u+|u|^{\frac{2}{n}}u \|_{\mathscr{D}^{-1}}^{\frac{n+2}{2n}}, & \mbox{if } n\geq 3.
\end{cases}
\end{equation}
\end{proposition}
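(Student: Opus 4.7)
My plan is to independently estimate $\|\rho\|_{\mathscr{D}^{1}}$ from below and $\|h\|_{\mathscr{D}^{-1}}:=\|\Delta_{\mathbb{H}^n}u+|u|^{p-1}u\|_{\mathscr{D}^{-1}}$ from above, then verify that these two bounds match the claimed inequality via the exponent $(n+2)/(2n)=(Q+2)/(2(Q-2))$. The lower bound on $\|\rho\|_{\mathscr{D}^{1}}$ will come from Lemma~\ref{sharp1}, which yields $\|\rho\|_{\mathscr{D}^{1}}^{2}\approx\|f\|_{\mathscr{D}^{-1}}^{2}-\|P_{\mathscr{F}}\Delta_{\mathbb{H}^n}^{-1} f\|_{\mathscr{D}^{1}}^{2}$, combined with the sharp lower bound on $\|f\|_{\mathscr{D}^{-1}}$ from Lemma~\ref{sharpes} and a proof that $\|P_{\mathscr{F}}\Delta_{\mathbb{H}^n}^{-1} f\|_{\mathscr{D}^{1}}$ is strictly smaller than $\|f\|_{\mathscr{D}^{-1}}$. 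The upper bound on $\|h\|_{\mathscr{D}^{-1}}$ will come from subtracting the defining relation \eqref{3.45} from \eqref{key}, which produces the representation $h=-\Delta_{\mathbb{H}^n} P_{\mathscr{F}}\Delta_{\mathbb{H}^n}^{-1}(f+N(\rho))-p\sum_{k}\bigl(\int\sigma^{p-1}\rho\, e_{k}\,d\xi\bigr)\Delta_{\mathbb{H}^n} e_{k}$ modulo a sign convention.

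\textbf{Projection estimate.} Since $\|P_{\mathscr{F}}\Delta_{\mathbb{H}^n}^{-1} f\|_{\mathscr{D}^{1}}^{2}\approx\sum_{i,a}\bigl|\int f\, Z_{i}^{a}\, d\xi\bigr|^{2}$, I would analyse each direction separately. The configuration $\sigma(z,t)=U(z,t)+U(z,t+1/\varepsilon)$ is rotationally invariant in the $z$-variables, so $f$ is even in each $x_{j},y_{j}$; hence the horizontal contributions $a=1,\dots,2n$ vanish by oddness of $Z_{i}^{a}$. For the dilation direction $a=2n+2$, Lemma~\ref{fz} combined with \eqref{e2} yields $|\int f\, Z_{i}^{2n+2}|\approx\varepsilon^{(Q-2)/2}$. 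For the $t$-translation $a=2n+1$, I would Taylor-expand the far bubble $U_{j}(z,t)$ near the concentration center of $U_{i}$: the leading constant contribution integrates to zero because $\int U^{p-1}\partial_{t}U\, d\xi=0$, and the next-order term is of order $\varepsilon^{Q/2}\ll\varepsilon^{(Q-2)/2}$. Altogether, $\|P_{\mathscr{F}}\Delta_{\mathbb{H}^n}^{-1} f\|_{\mathscr{D}^{1}}\lesssim\varepsilon^{(Q-2)/2}$. Since this is strictly smaller than the lower bound $\|f\|_{\mathscr{D}^{-1}}\gtrsim\varepsilon^{(Q+2)/4}$ when $n\geq 3$ (respectively $\varepsilon^{2}|\log\varepsilon|^{1/2}$ when $n=2$) from Lemma~\ref{sharpes}, I conclude $\|\rho\|_{\mathscr{D}^{1}}\gtrsim\|f\|_{\mathscr{D}^{-1}}$.

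\textbf{Upper bound on $\|h\|$.} Using the representation above and the fact that $\Delta_{\mathbb{H}^n}$ is an isometry between $(\mathscr{F},\|\cdot\|_{\mathscr{D}^{1}})$ and its image in $\mathscr{D}^{-1}$, the triangle inequality gives
\[
\|h\|_{\mathscr{D}^{-1}}\lesssim\|P_{\mathscr{F}}\Delta_{\mathbb{H}^n}^{-1} f\|_{\mathscr{D}^{1}}+\|N(\rho)\|_{\mathscr{D}^{-1}}+\Bigl(\sum_{k}\Bigl|\int\sigma^{p-1}\rho\, e_{k}\,d\xi\Bigr|^{2}\Bigr)^{1/2}.
\]
The first term is $\lesssim\varepsilon^{(Q-2)/2}$ by the projection estimate. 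The second is $\lesssim\|\rho\|_{\mathscr{D}^{1}}^{\min\{p,2\}}$ via $|N(\rho)|\lesssim\sigma^{p-2}\rho^{2}+|\rho|^{p}$ and Sobolev embedding, which is of strictly lower order. For the third, the orthogonality $(\rho,Z_{i}^{a})_{\mathscr{D}^{1}}=0$ (and the identity $-\Delta_{\mathbb{H}^n} Z_{i}^{a}=pU_{i}^{p-1}Z_{i}^{a}$) lets me replace $\sigma^{p-1}$ by $\sigma^{p-1}-U_{i}^{p-1}$ as in \eqref{3-36}, and the interaction bound $\|\sigma_{[i]}^{p-1}U_{i}\|_{\mathscr{D}^{-1}}\lesssim\varepsilon^{2}$ (using \eqref{e4} when $n=2$) produces a contribution $\lesssim\varepsilon^{2}\|\rho\|_{\mathscr{D}^{1}}$, again of lower order. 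Consequently $\|h\|_{\mathscr{D}^{-1}}\lesssim\varepsilon^{(Q-2)/2}$.

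\textbf{Conclusion and main obstacle.} Raising $\|h\|_{\mathscr{D}^{-1}}\lesssim\varepsilon^{(Q-2)/2}$ to the exponent $(n+2)/(2n)$ yields $\|h\|_{\mathscr{D}^{-1}}^{(n+2)/(2n)}\lesssim\varepsilon^{(Q+2)/4}\lesssim\|\rho\|_{\mathscr{D}^{1}}$ for $n\geq 3$; for $n=2$, combining $\|h\|_{\mathscr{D}^{-1}}\lesssim\varepsilon^{2}$ with $|\log\|h\|_{\mathscr{D}^{-1}}|^{1/2}\gtrsim|\log\varepsilon|^{1/2}$ yields $\|h\|_{\mathscr{D}^{-1}}|\log\|h\|_{\mathscr{D}^{-1}}|^{1/2}\lesssim\varepsilon^{2}|\log\varepsilon|^{1/2}\lesssim\|\rho\|_{\mathscr{D}^{1}}$. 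The hard part will be the projection estimate in the $t$-translation direction, where the absence of a direct symmetry forces a delicate Taylor expansion to detect the first-moment cancellation; in higher dimensions, one may also need to refine the crude bound $|\sigma^{p-1}-U_{i}^{p-1}|\lesssim\sigma_{[i]}^{p-1}$ to the sharper pointwise bound $\lesssim U_{i}^{p-2}\sigma_{[i]}$ on the set $\{U_{i}>\sigma_{[i]}\}$ so as to keep the $\sum_{k}|\int\sigma^{p-1}\rho\, e_{k}|^{2}$ contribution strictly below the threshold $\varepsilon^{(Q-2)/2}$.
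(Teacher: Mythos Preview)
Your proposal is correct and follows essentially the same strategy as the paper: derive the representation $h=-\Delta_{\mathbb{H}^n}P_{\mathscr{F}}\Delta_{\mathbb{H}^n}^{-1}(f+N(\rho))-p\sum_k(\int\sigma^{p-1}\rho\,e_k)\Delta_{\mathbb{H}^n}e_k$, bound each piece by $\varepsilon^{(Q-2)/2}$, and combine with Lemma~\ref{sharp1} and Lemma~\ref{sharpes} to get the lower bound on $\|\rho\|_{\mathscr{D}^1}$.

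The one place where you work harder than necessary is the projection estimate $\sum_{i,a}|\int fZ_i^a\,d\xi|$. You single out the $t$-translation direction $a=2n+1$ as the ``hard part'' and propose a Taylor expansion to detect a first-moment cancellation. The paper avoids this entirely: since $|Z_i^a|\lesssim U_i$ holds uniformly for \emph{all} $a=1,\dots,2n+2$, the decomposition in the proof of Lemma~\ref{fz} (which uses only this pointwise bound) applies verbatim to every direction and yields $|\int fZ_i^a\,d\xi|\lesssim\sum_{j\neq i}\int U_i^pU_j\,d\xi\approx\varepsilon^{(Q-2)/2}$ without any symmetry or expansion argument. Your symmetry observation for $a\leq 2n$ and the Taylor analysis for $a=2n+1$ are correct but superfluous; the obstacle you anticipate does not arise. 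Similarly, for the term $\sum_k|\int\sigma^{p-1}\rho\,e_k|$ the paper simply invokes the pointwise splitting \eqref{4-111} (giving $o(\varepsilon^{(Q-2)/2})+\|\rho\|_{\mathscr{D}^1}^2$) rather than passing through $\|\sigma_{[i]}^{p-1}U_i\|_{\mathscr{D}^{-1}}$.
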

\begin{proof}
Direct calculation yields
\begin{equation}\label{sharpu}
\begin{aligned}
  -\Delta_{\mathbb{H}^n} u- |u|^{p-1}u=- \Delta_{\mathbb{H}^n}P_{\mathscr{F}}\Delta_{\mathbb{H}^n}^{-1} (f+N(\rho)) -p \sum_{k=1}^{m(2n+2)}\Big{(}\int_{\mathbb{H}^n}  \sigma^{p-1} \rho e_{k} \, d\xi  \, \Big{)}\Delta_{\mathbb{H}^n}e_{k}.
\end{aligned}
\end{equation}
As shown in equation \eqref{4.111}, we have
\begin{equation}\label{3-36}
\begin{aligned}
\sum_{k=1}^{m(2n+2)} |\int_{\mathbb{H}^n} \sigma^{p-1} \rho e_{k} \, d\xi| &\approx \sum_{i,a} |\int_{\mathbb{H}^n} (\sigma^{p-1}-Q_i^{p-1}) \rho Z_i^a \, d\xi|\\&
\lesssim
o(\varepsilon^{\frac{Q-2}{2}})+ \|\rho\|_{\mathscr{D}^1}^2.
\end{aligned}
\end{equation}
On the other hand, \eqref{NZ} and \eqref{3-33} yield
\begin{equation}\label{3-37}
\begin{aligned}
 \|\Delta_{\mathbb{H}^n} P_{\mathscr{F}}\Delta_{\mathbb{H}^n}^{-1} &(f+N(\rho))\|_{\mathscr{D}^{-1}}
 = \|P_{\mathscr{F}}\Delta_{\mathbb{H}^n}^{-1} (f+N(\rho))\|_{\mathscr{D}^{1}}\\&
 \approx \sum_{i, a} \left|\int_{\mathbb{H}^n} (f+N(\rho))Z_i^a \, d\xi\right|\\&
 \lesssim  \sum_{i, a} \left |\int_{\mathbb{H}^n}  f Z_i^a \, d\xi\right|+\|\rho\|_{\mathscr{D}^1}^2\\&
 \lesssim \sum\limits_{\substack{i,j \\ i\neq j}} \left |\int_{\mathbb{H}^n}  U_i^p U_j \, d\xi\right|+\|\rho\|_{\mathscr{D}^1}^2\\&
  \approx
\varepsilon^{\frac{Q-2}{2}}.
\end{aligned}
\end{equation}
Therefore, we must have
\begin{equation}\label{sha1}
\begin{aligned}
\|\Delta_{\mathbb{H}^n} u+ |u|^{p-1}u\|_{\mathscr{D}^{-1}}
\lesssim \varepsilon^{\frac{Q-2}{2}}.
\end{aligned}
\end{equation}
Combining Lemma \ref{sharpes} with Lemma \ref{sharp1}, we obtain
\begin{equation}\label{sha}
\begin{aligned}
\|\rho\|_{\mathscr{D}^1}&\gtrsim\| f\|_{\mathscr{D}^{-1}}- C\varepsilon^{\frac{Q-2}{2}}\\& \gtrsim
\begin{cases}
\varepsilon^2|\log  \varepsilon|^{\frac{1}{2}}, & \mbox{if } n=2,\\
  \varepsilon^{\frac{Q+2}{4}}, & \mbox{if } n\geq 3
\end{cases}\\& \gtrsim
\begin{cases}
\|\Delta_{\mathbb{H}^n}u+|u|^{\frac{2}{n}}u \|_{\mathscr{D}^{-1}}|\log  \|\Delta_{\mathbb{H}^n}u+|u|^{\frac{2}{n}}u \|_{\mathscr{D}^{-1}}|^{\frac{1}{2}}, & \mbox{if } n=2,\\
\|\Delta_{\mathbb{H}^n}u+|u|^{\frac{2}{n}}u \|_{\mathscr{D}^{-1}}^{\frac{n+2}{2n}}, & \mbox{if } n\geq 3.
\end{cases}
\end{aligned}
\end{equation}
\end{proof}

\begin{proof}[Proof of Theorem \ref{thmsharp}]
Due to Proposition \ref{999}, to verify the sharpness, it suffices to prove that
 \begin{equation}\label{rhosharp}
  \inf_{\lambda_i, \xi_i}\| u-\sum_{i=1}^{m}\mathfrak{g}_{\lambda_i, \xi_i }U \|_{\mathscr{D}^1}\gtrsim \|\rho\|_{\mathscr{D}^1}, 
  \end{equation}
  which follows from the Taylor expansion and the fact $\rho \in \mathscr{F}^{\perp}$, see \cite[Section 4.5]{Figalli2020} and \cite[page 219]{Wei} for instance. Furthermore, we shall demonstrate that $u^{+}$ remains a sharp example.
  By testing the equation \eqref{sharpu} with $u^{-}$, and using \eqref{3-33}, \eqref{3-36}, and \eqref{3-37}, we deduce that
  \[
\begin{aligned}
\|u^{-}\|_{\mathscr{D}^1}^2\lesssim\| u^{-}\|_{\mathscr{D}^1}^{p+1}+\varepsilon^{\frac{Q-2}{2}}\| u^{-}\|_{\mathscr{D}^1}.
\end{aligned}
\]
Thus, we obtain $$\|u^{-}\|_{\mathscr{D}^1}\lesssim \varepsilon^{\frac{Q-2}{2}}.$$
Next, we estimate the difference
\[
\begin{aligned}
 \Big{|}\| \Delta_{\mathbb{H}^n} u^+ &+ |u^+|^{p-1} u^+ \|_{\mathscr{D}^{-1}} - \| \Delta_{\mathbb{H}^n} u + |u|^{p-1} u \|_{\mathscr{D}^{-1}}  \Big{|}\\
& \leq \| u^{-} \|_{\mathscr{D}^1} + \| |u^+|^{p-1} u^+ - |u|^{p-1} u \|_{\mathscr{D}^{-1}} \\
& \lesssim \| u^{-} \|_{\mathscr{D}^1} + \| u^{-} \|_{\mathscr{D}^1}^p \\
& \lesssim \varepsilon^{\frac{Q-2}{2}}.
\end{aligned}
\]
Therefore, by \eqref{sha1}, \eqref{sha} and \eqref{rhosharp}, we have
\begin{equation*}
\begin{aligned}
\inf_{\lambda_i, \xi_i}&\| u^{+}-\sum_{i=1}^{m}\mathfrak{g}_{\lambda_i, \xi_i }U \|_{\mathscr{D}^1}\\&\gtrsim \inf_{\lambda_i, \xi_i}\| u-\sum_{i=1}^{m}\mathfrak{g}_{\lambda_i, \xi_i }U \|_{\mathscr{D}^1}-C\varepsilon^{\frac{Q-2}{2}}\\&\gtrsim
\begin{cases}
\varepsilon^2|\log  \varepsilon|^{\frac{1}{2}}, & \mbox{if } n=2\\
  \varepsilon^{\frac{Q+2}{4}}, & \mbox{if } m\geq 3
\end{cases}\\& \gtrsim
\begin{cases}
\|\Delta_{\mathbb{H}^n} u^{+}+| u^{+}|^{\frac{2}{n}} u^{+} \|_{\mathscr{D}^{-1}}|\log  \|\Delta_{\mathbb{H}^n} u^{+}+| u^{+}|^{\frac{2}{n}} u^{+} \|_{\mathscr{D}^{-1}}|^{\frac{1}{2}}, & \mbox{if } n=2 ,\\
\|\Delta_{\mathbb{H}^n} u^{+}+| u^{+}|^{\frac{2}{n}} u^{+} \|_{\mathscr{D}^{-1}}^{\frac{n+2}{2n}}, & \mbox{if } n\geq 3.
\end{cases}
\end{aligned}
\end{equation*}

\end{proof}

\section{Appendix}
In this section, we carry out a detailed computation of the interaction integrals between a standard bubble $U$ and another bubble $\mathfrak{g}_{\lambda, \xi_0}U$ with a scaling parameter $\lambda \leq 1$. Owing to the  invariance property of the gauge transformation $\mathfrak{g}$, as discussed in Section 2.2, the interaction between two Jerison–Lee bubbles can, without loss of generality, be reduced to  the canonical configuration under consideration.

Note that in this case, 
$$\varepsilon=\min\{\frac{1}{ \lambda |\xi_0|^2}, \lambda\}\leq1.$$
The following estimates hold to be true:
  \begin{lemma}\label{app1}
    Let $\lambda\leq 1$, then for any positive $\alpha\neq \beta$ such that
    $\alpha+\beta=2_Q^*$, we have
   \begin{equation}
   \int_{\mathbb{H}^n}U^{\alpha}(\xi)|\mathfrak{g}_{\lambda, \xi_0}U(\xi)|^{\beta} \, d\xi \approx  \varepsilon^{\frac{\min\{\alpha,\beta\}(Q-2)}{2}}.
   \end{equation}
  \end{lemma}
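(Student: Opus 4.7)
The plan is to reduce to a direct computation using explicit pointwise decay of both bubbles. Writing $V:=\mathfrak{g}_{\lambda,\xi_0}U$, the basic pointwise bounds are
\begin{equation*}
U(\xi) \approx (1+|\xi|)^{-(Q-2)}, \qquad V(\xi) \approx \frac{\lambda^{(Q-2)/2}}{(1+\lambda|\xi_0^{-1}\circ\xi|)^{Q-2}},
\end{equation*}
so that $U$ concentrates near $0$ at unit scale while $V$ concentrates near $\xi_0$ at scale $1/\lambda$. Under the standing hypothesis $\lambda\le 1$, two regimes arise: \emph{Case A}, $\lambda|\xi_0|\le 1$, in which $\varepsilon=\lambda$ and the origin still lies inside the mass core of $V$; and \emph{Case B}, $\lambda|\xi_0|\ge 1$, in which $\varepsilon=1/(\lambda|\xi_0|^2)$ and the two cores are well separated.

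In Case A, I would partition $\mathbb{H}^n$ into $\Omega_1=\{|\xi|\le 1\}$, $\Omega_2=\{|\xi|>1,\ |\xi_0^{-1}\circ\xi|\le 1/\lambda\}$, $\Omega_3=\{|\xi_0^{-1}\circ\xi|\gtrsim 1/\lambda\}$. On $\Omega_1$ we have $U\approx 1$ and $V\approx\lambda^{(Q-2)/2}$, contributing $\approx\lambda^{\beta(Q-2)/2}$. On $\Omega_3$ Cygan's inequality together with $|\xi_0|\le 1/\lambda\le|\xi_0^{-1}\circ\xi|$ gives $|\xi|\approx|\xi_0^{-1}\circ\xi|$, so using $(\alpha+\beta)(Q-2)=2Q$,
\begin{equation*}
\int_{\Omega_3}U^\alpha V^\beta\, d\xi \approx \lambda^{-\beta(Q-2)/2}\int_{1/\lambda}^{\infty}r^{Q-1-(\alpha+\beta)(Q-2)}\,dr = \lambda^{-\beta(Q-2)/2}\int_{1/\lambda}^{\infty}r^{-Q-1}dr \approx \lambda^{\alpha(Q-2)/2}.
\end{equation*}
On $\Omega_2$ both bounds arise and the contribution is bounded by $\max(\lambda^{\alpha(Q-2)/2},\lambda^{\beta(Q-2)/2})$. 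Since $\lambda\le 1$, the larger of the three contributions is $\lambda^{\min(\alpha,\beta)(Q-2)/2}=\varepsilon^{\min(\alpha,\beta)(Q-2)/2}$, and the matching lower bound comes from retaining only the dominant piece among $\Omega_1$ (when $\min=\beta$) or $\Omega_3$ (when $\min=\alpha$).

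In Case B, I would instead partition $\mathbb{H}^n$ into $\Omega_1'=\{|\xi|\le|\xi_0|/2\}$ and $\Omega_2'=\{|\xi_0^{-1}\circ\xi|\le|\xi_0|/2\}$ (plus an overlap/remainder easily absorbed). On $\Omega_1'$, Cygan gives $|\xi_0^{-1}\circ\xi|\approx|\xi_0|$, hence $V\approx\lambda^{-(Q-2)/2}|\xi_0|^{-(Q-2)}$; then $\int_{\Omega_1'}U^\alpha d\xi$ is $O(1)$ precisely when $\alpha>Q/(Q-2)$, yielding contribution $\approx(\lambda|\xi_0|^2)^{-\beta(Q-2)/2}=\varepsilon^{\beta(Q-2)/2}$. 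On $\Omega_2'$, where $U\approx|\xi_0|^{-(Q-2)}$, I further split at the concentration scale $|\xi_0^{-1}\circ\xi|=1/\lambda$; the annular piece $1/\lambda\le|\xi_0^{-1}\circ\xi|\le|\xi_0|/2$ produces the integral $\int_{1/\lambda}^{|\xi_0|/2}r^{Q-1-\beta(Q-2)}dr$, whose leading behavior at $r=1/\lambda$ (when $\beta>Q/(Q-2)$) combines with the prefactors to give $(\lambda|\xi_0|^2)^{-\alpha(Q-2)/2}=\varepsilon^{\alpha(Q-2)/2}$. Whichever of the two sub-regimes $\alpha\gtrless Q/(Q-2)$ holds, the dominant region produces $\varepsilon^{\min(\alpha,\beta)(Q-2)/2}$, with the reverse inequality from restriction.

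The main obstacle is not any single estimate but the bookkeeping: one must check in each of the four combinations (Case A or B, and $\min=\alpha$ or $\min=\beta$) that the powers of $\lambda$ and $|\xi_0|$ produced by the radial integrals recombine exactly into $\varepsilon^{\min(\alpha,\beta)(Q-2)/2}$, using the single algebraic identity $(\alpha+\beta)(Q-2)/2=Q$. A secondary but genuine technical point is that Cygan's pseudo-triangle inequality on $\mathbb{H}^n$ carries a multiplicative constant, so the thresholds defining $\Omega_3$ and the splitting in $\Omega_2'$ must be enlarged by a fixed constant to guarantee $|\xi|\approx|\xi_0^{-1}\circ\xi|$ in the far field; the discarded transitional shells have the same order as the main terms and can be absorbed.
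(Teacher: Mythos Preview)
Your proposal is correct and follows essentially the same route as the paper's proof: both split into the two regimes $\lambda|\xi_0|\le 1$ and $\lambda|\xi_0|\ge 1$, partition $\mathbb{H}^n$ according to the cores of the two bubbles, replace each bubble by its constant or power-law approximation on each piece, and reduce to radial integrals that close via $(\alpha+\beta)(Q-2)=2Q$. The only cosmetic differences are that the paper uses a two-region split $B_{2/\lambda}$, $B_{2/\lambda}^c$ in the first case (your $\Omega_1$ and $\Omega_2$ merge into a single region there) and writes out the third ``remainder'' region $B_{|\xi_0|/2}^c(\xi_0)\cap B_{|\xi_0|/2}^c(0)$ explicitly in the second case rather than leaving it to be absorbed.
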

 \begin{proof}
If $\varepsilon=\lambda$, then $\lambda |\xi_0|\leq 1 $.
   In the region $B_{\frac{2}{\lambda}}$ we have $$\mathfrak{g}_{\lambda, \xi_0}U \approx \lambda^{\frac{Q-2}{2}},~ U \approx \frac{1}{1+|\xi|^{Q-2}};$$
    in $B_{\frac{2}{\lambda}}^c$ we have $$\mathfrak{g}_{\lambda, \xi_0}U \approx \frac{1}{\lambda^{\frac{Q-2}{2}}|\xi|^{Q-2} },~ U \approx \frac{1}{|\xi|^{Q-2}}.$$ Thus, we can deduce that
   \begin{equation}
   \begin{aligned}
      \int_{\mathbb{H}^n}& U^{\alpha}(\xi)|\mathfrak{g}_{\lambda, \xi_0}U(\xi)|^{\beta} \, d\xi \\
      &\approx \int_{0}^{\frac{2}{\lambda}}  \lambda^{\frac{\beta(Q-2)}{2}} \frac{r^{Q-1}}{1+r^{\alpha(Q-2)}} dr+\int_{\frac{2}{\lambda}}^{+\infty} \frac{r^{Q-1}}{\lambda^{\frac{\beta(Q-2)}{2}} r^{(Q-2)(\alpha+\beta)}  }dr \\&\approx  \varepsilon^{\frac{\min\{\alpha,\beta\}(Q-2)}{2}}.
   \end{aligned}
   \end{equation}
   
   If $\varepsilon=\frac{1}{ \lambda |\xi_0|^2}$, then $\lambda |\xi_0|\geq 1 $. In the region $B_{\frac{|\xi_0|}{2}}(\xi_0)$, 
   $$U\approx \frac{1}{|\xi_0|^{Q-2}},~\mathfrak{g}_{\lambda, \xi_0}U \approx  \frac{\lambda^{\frac{Q-2}{2}}}{(1+\lambda |\xi_0^{-1}\circ \xi|)^{Q-2}};$$ in the region $B_{\frac{|\xi_0|}{2}}^c(\xi_0) \cap B_{\frac{|\xi_0|}{2}}(0)$, $$U \approx \frac{1}{1+|\xi|^{Q-2}}, \quad \mathfrak{g}_{\lambda, \xi_0}U \approx  \frac{1}{\lambda^{\frac{Q-2}{2}} |\xi_0|^{Q-2}};$$ in the region $B_{\frac{|\xi_0|}{2}}^c(\xi_0) \cap B_{\frac{|\xi_0|}{2}}^c(0)$, $$U\approx \frac{1}{|\xi|^{Q-2}},~\mathfrak{g}_{\lambda, \xi_0}U \approx  \frac{1}{\lambda^{\frac{Q-2}{2}} |\xi|^{Q-2}}.$$ 
   Then, we have
   \begin{equation}
   \begin{aligned}
      \int_{\mathbb{H}^n}U^{\alpha}(\xi)&|\mathfrak{g}_{\lambda, \xi_0}U|^{\beta}(\xi) \, d\xi 
      \approx |\xi_0|^{-\alpha (Q-2)}  \int_{0}^{\frac{|\xi_0|}{2}} \frac{\lambda^{\frac{\beta(Q-2)}{2}} r^{Q-1}}{(1+\lambda r)^{\beta(Q-2)}} dr\\&
      +  \frac{\int_{0}^{\frac{|\xi_0|}{2}} \frac{r^{Q-1}}{1+r^{\alpha(Q-2)}}dr}{\lambda^{\frac{\beta(Q-2)}{2}} |\xi_0|^{\beta(Q-2)}}
      +\int_{\frac{|\xi_0 |}{2}}^{+\infty} \frac{r^{Q-1}}{\lambda^{\frac{\beta(Q-2)}{2}} r^{(Q-2)(\alpha+\beta)}  }dr \\
      \approx&  \varepsilon^{\frac{\min\{\alpha,\beta\}(Q-2)}{2}}.
   \end{aligned}
   \end{equation}
 \end{proof}

  \begin{lemma}\label{app2}
Let $\lambda\leq 1$,  $\alpha>\beta>0$ with $\alpha+\beta=2_Q^*$. Then for any positive bounded function $K(\xi)$, we have
   \begin{equation}
   \int_{\mathbb{H}^n}K(\xi) U^{\alpha}(\xi)|\mathfrak{g}_{\lambda, \xi_0}U(\xi)|^{\beta} \, d\xi=
     \frac{ c_0^{p+1}c_{\alpha}    \lambda^{\frac{\beta(Q-2)}{2}}}{ [(1+\lambda^2 |z_0|^2 )^2  + \lambda^4 t_0^2]^{\frac{\beta(Q-2)}{4}}}+o(\varepsilon^{\frac{\beta(Q-2)}{2}}),
   \end{equation}
   where $c_\alpha$ is the constant given by $$c_\alpha=\int_{\mathbb{H}^n} K(\xi)U^{\alpha}(\xi) \, d\xi. $$
  \end{lemma}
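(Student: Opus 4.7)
The guiding intuition is that, since $\alpha(Q-2)>Q$, the weight $U^{\alpha}$ concentrates sharply near the origin, while $|\mathfrak{g}_{\lambda,\xi_0}U|^{\beta}$ varies only on the scale $\lambda^{-1}\geq 1$. Hence on the region where $U^{\alpha}$ carries most of its mass, the factor $|\mathfrak{g}_{\lambda,\xi_0}U|^{\beta}$ should be well approximated by its value at the origin, producing the leading term
\[
|\mathfrak{g}_{\lambda,\xi_0}U(0)|^{\beta}\int_{\mathbb{H}^n}K\,U^{\alpha}\,d\xi,
\]
which, after inserting the explicit formula $|\mathfrak{g}_{\lambda,\xi_0}U(0)|^{\beta}=c_0^{\beta}\lambda^{\beta(Q-2)/2}/[(1+\lambda^2|z_0|^2)^2+\lambda^4 t_0^2]^{\beta(Q-2)/4}$, matches (up to the collection of $c_0$-powers) the right-hand side of the claimed identity.

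The plan is to fix an auxiliary radius $R=R(\varepsilon)\to\infty$ that grows sufficiently slowly (say $R=\varepsilon^{-\theta}$ with a small $\theta>0$ to be tuned), and split the integral as $\int_{\mathbb{H}^n}=\int_{B_R}+\int_{B_R^c}$. Writing $M(\xi):=(1+\lambda^2|z-z_0|^2)^2+\lambda^4(t-t_0-2\,\text{Im}(\bar z\cdot z_0))^2$ and expanding around $\xi=0$ gives $M(\xi)=M(0)(1+o(1))$ uniformly on $B_R$, hence $|\mathfrak{g}_{\lambda,\xi_0}U(\xi)|^{\beta}=|\mathfrak{g}_{\lambda,\xi_0}U(0)|^{\beta}(1+o(1))$ on $B_R$. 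This yields
\[
\int_{B_R}K\,U^{\alpha}|\mathfrak{g}_{\lambda,\xi_0}U|^{\beta}\,d\xi=|\mathfrak{g}_{\lambda,\xi_0}U(0)|^{\beta}\bigl(c_{\alpha}+o_R(1)\bigr)+o\bigl(\varepsilon^{\beta(Q-2)/2}\bigr).
\]
For the tail on $B_R^c$, I would imitate the region decomposition carried out in the proof of Lemma \ref{app1} (the subregions $B_{|\xi_0|/2}(\xi_0)$, $B_{|\xi_0|/2}(\xi_0)^c\cap B_{|\xi_0|/2}(0)$, $B_{|\xi_0|/2}(\xi_0)^c\cap B_{|\xi_0|/2}(0)^c$ in the regime $\lambda|\xi_0|\gtrsim 1$, and the analogous partition when $\lambda|\xi_0|\lesssim 1$), but with every piece intersected with $\{|\xi|\geq R\}$. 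In each piece the bounds used in Lemma \ref{app1} acquire an additional factor that tends to $0$ as $R\to\infty$, so the tail contributes $o(\varepsilon^{\beta(Q-2)/2})$. Combining the two pieces gives the desired asymptotic.

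The main obstacle is the uniform smallness of the relative error $M(\xi)/M(0)-1$ on $B_R$: the perturbation terms $\lambda^2 R|z_0|$, $\lambda^4 R|t_0|$, $\lambda^2 R^2$, $\lambda^4 R^2|z_0|^2$ must each be dominated by $M(0)=(1+\lambda^2|z_0|^2)^2+\lambda^4 t_0^2$. This forces a case split paralleling Lemma \ref{app1}: when $\lambda|\xi_0|\lesssim 1$ we have $\varepsilon=\lambda$ and $M(0)\approx 1$, so the governing constraint is $\lambda R\ll 1$, which is compatible with $R=\varepsilon^{-\theta}$ for any $\theta<1$; when $\lambda|\xi_0|\gtrsim 1$ we have $\varepsilon=1/(\lambda|\xi_0|^2)$ and $M(0)\approx\lambda^4|\xi_0|^4$, and the binding constraint becomes $R\ll|\xi_0|$, again satisfied for small enough $\theta$. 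Once $\theta$ is fixed so that both regimes give $o(1)$ perturbation and $o(1)$ tail simultaneously, the estimate closes.
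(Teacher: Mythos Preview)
Your plan is correct and matches the paper's proof almost exactly: both split into the two regimes $\lambda|\xi_0|\lesssim 1$ and $\lambda|\xi_0|\gtrsim 1$, expand $M(\xi)=M(0)(1+\text{polynomial error})$ on an inner ball, and bound the outer tail by the decay of $U^{\alpha}$ together with the region decomposition of Lemma~\ref{app1}. The only cosmetic difference is the cutoff radius --- the paper takes the natural scales $R=\tfrac{1}{100\lambda}$ (first case) and $R=\tfrac{\sqrt{\lambda}|\xi_0|}{100}=\tfrac{1}{100\sqrt{\varepsilon}}$ (second case) and then integrates each monomial of the polynomial error against $KU^{\alpha}$, whereas you take the smaller $R=\varepsilon^{-\theta}$ so that the error is $o(1)$ uniformly; both choices close the estimate.
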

  \begin{proof}
  The range of $\alpha, ~\beta$ implies that $$\alpha(Q-2)>Q>\beta(Q-2).$$ Denote by $$I(\xi )=K(\xi) U^{\alpha}(\xi)|\mathfrak{g}_{\lambda, \xi_0}U(\xi)|^{\beta} .$$
  
   If $\varepsilon=\lambda$, we use
  \begin{equation}
       \int_{\mathbb{H}^n}I(\xi ) \, d\xi =\int_{B_{\frac{1}{100\lambda }}}I(\xi ) \, d\xi +\int_{B_{\frac{1}{100\lambda }}^c}I(\xi ) \, d\xi .
    \end{equation}
  In the region $B_{\frac{1}{100\lambda }}^c$, by direct calculation
    \[ \int_{B_{\frac{1}{100\lambda }}^c} I(\xi)  \, d\xi \lesssim  \lambda^{\frac{\beta(Q-2)}{2}} \int_{B_{\frac{1}{100\lambda }}^c}  \frac{1}{[(1+|z|^2)^2 +t^2]^{\frac{\alpha (Q-2)}{4}}} \, d\xi\lesssim  o(\lambda^{\frac{\beta(Q-2)}{2}}). \]
    In the region $B_{\frac{1}{100\lambda }}$, we have
    \begin{equation}\label{case1.1}
    \begin{aligned}
      &(1+\lambda^2|z-z_0|^2)^2 +\lambda^4(t-t_0-2 \text{Im} (\overline{z}\cdot z_0) )^2
     \\& =[(1+\lambda^2 |z_0|^2 )^2  + \lambda^4 t_0^2]+[2\lambda^2(1+\lambda^2 |z_0|^2 )(|z|^2-2\text{Re} (\overline{z}\cdot z_0))]\\&
     +\lambda^4(|z|^2-2\text{Re} (\overline{z}\cdot z_0))^2 +\lambda^4(t-2\text{Im} (\overline{z}\cdot z_0))^2+2\lambda^4t_0(2 \text{Im} (\overline{z}\cdot z_0)-t)\\&
     =[(1+\lambda^2 |z_0|^2 )^2  + \lambda^4 t_0^2](1+P(\lambda z, \lambda^2 t)).
      \end{aligned}
    \end{equation}
    Here $P$ is a polynomial with uniformly bounded coefficients such that $P(0)=0$ and $|P(\lambda z, \lambda^2 t)|\leq \frac{1}{2}$ in $B_{\frac{1}{100\lambda }}$. Now, combining the fact that
    $$\Big{|}\frac{1}{(1+P(\lambda z, \lambda^2 t))^\frac{\beta(Q-2)}{4}}-1\Big{|}\lesssim  P(\lambda z, \lambda^2 t),$$
the assumption $\alpha(Q-2)>Q$,  and
    \[  \int_{B_{\frac{1}{100\lambda }}}  \frac{\lambda^k |\xi|^k }{[(1+|z|^2)^2 +t^2]^{\frac{\alpha (Q-2)}{4}}} \, d\xi \lesssim \lambda^{\alpha(Q-2)-Q}| \log \lambda|=o(1) ,   ~~\forall k\geq 0,\]
    we conclude that
    \begin{equation}\label{4-7}
    \begin{aligned}
     \int_{\mathbb{H}^n}I(\xi ) \, d\xi &=\int_{B_{\frac{1}{100\lambda }}}I(\xi ) \, d\xi+ o(\lambda^{\frac{\beta(Q-2)}{2}})\\
      &= \frac{c_0^{p+1}\lambda^{\frac{\beta(Q-2)}{2}} }{[(1+\lambda^2 |z_0|^2 )^2  + \lambda^4 t_0^2]^{\frac{\beta(Q-2)}{4}}} \Big{[}\int_{B_{\frac{1}{100\lambda }}} \frac{K(\xi)}{[(1+|z|^2)^2 +t^2]^{\frac{\alpha(Q-2)}{4}} } \, d\xi  +o(1)\Big{]}\\&
     = \frac{c_0^{p+1}\lambda^{\frac{\beta(Q-2)}{2}} }{[(1+\lambda^2 |z_0|^2 )^2  + \lambda^4 t_0^2]^{\frac{\beta(Q-2)}{4}}} \Big{[}\int_{\mathbb{H}^n} \frac{K(\xi)}{[(1+|z|^2)^2 +t^2]^{\frac{\alpha(Q-2)}{4}} } \, d\xi  +o(1)\Big{]}.
     \end{aligned}
    \end{equation}

    If $\varepsilon=\frac{1}{\lambda |\xi_0|^2}$, we use
    \begin{equation}
       \int_{\mathbb{H}^n} I(\xi) \, d\xi=\int_{B_{\frac{\sqrt{\lambda} |\xi_0|}{100 }}} I(\xi) \, d\xi+\int_{B_{\frac{\sqrt{\lambda} |\xi_0|}{100 }}^c} I(\xi) \, d\xi.
    \end{equation}
 For the region $B_{\frac{\sqrt{\lambda} |\xi_0|}{100}}^c $, we divide it by 
 \[ \int_{B_{\frac{\sqrt{\lambda} |\xi_0|}{100 }}^c} I(\xi) \, d\xi= \int_{B_{\frac{\sqrt{\lambda} |\xi_0|}{100 }}^c \cap B_{\frac{|\xi_0|}{4}}(\xi_0) } I(\xi) \, d\xi +\int_{B_{\frac{\sqrt{\lambda} |\xi_0|}{100}}^c \cap B_{\frac{|\xi_0|}{4}}^c(\xi_0) }I(\xi) \, d\xi.\]
 By direct calculation, we have
 \begin{align*}
  \int_{B_{\frac{\sqrt{\lambda} |\xi_0|}{100 }}^c \cap B_{\frac{|\xi_0|}{4}}(\xi_0) } I(\xi) \, d\xi
 &\lesssim | \xi_0|^{-\alpha(Q-2)}\int_{B_{\frac{|\xi_0|}{4}}} \frac{1}{(\sqrt{\lambda}|\xi|)^{\beta(Q-2)}} \, d\xi\\
 &\lesssim |\xi_0|^{Q-\alpha(Q-2)} \frac{1}{(\sqrt{\lambda}|\xi_0|)^{\beta(Q-2)}}\\&
 =o(\varepsilon^{\frac{\beta(Q-2)}{2}}),
 \end{align*}
 and
 \begin{align*}
  \int_{B_{\frac{\sqrt{\lambda} |\xi_0|}{100 }}^c \cap B_{\frac{|\xi_0|}{4}}^c(\xi_0) } I(\xi)\, d\xi& \lesssim  (\sqrt{\lambda} |\xi_0|)^{-\beta(Q-2)}  \int_{B_{\frac{\sqrt{\lambda} |\xi_0|}{100}}^c } \frac{1}{|\xi|^{\alpha(Q-2)}}  \, d\xi\\
 &\lesssim  (\sqrt{\lambda}|\xi_0|)^{Q-\alpha(Q-2)-\beta(Q-2)} \\&
 =o(\varepsilon^{\frac{\beta(Q-2)}{2}}) .
 \end{align*}
 On the other hand, in the region $B_{\frac{\sqrt{\lambda} |\xi_0|}{100 }}$, we have 
   \begin{equation}
   \begin{aligned} &\frac{\lambda^{\frac{Q-2}{2}}}{[(1+\lambda^2|z-z_0|^2)^2 +\lambda^4(t-t_0-2 \text{Im}(\overline{z}\cdot z_0) )^2]^{\frac{Q-2}{4}}}
   \\&=\frac{1}{[(\frac{1}{\lambda}+\lambda|z-z_0|^2)^2 +\lambda^2(t-t_0-2 \text{Im} (\overline{z}\cdot z_0) )^2]^{\frac{Q-2}{4}}},
   \end{aligned}
   \end{equation}
   and   
\begin{equation}\label{case1.2}
    \begin{aligned}
      &(\frac{1}{\lambda}+\lambda|z-z_0|^2)^2 +\lambda^2(t-t_0-2 \text{Im} (\overline{z}\cdot z_0) )^2
     \\& =[(\frac{1}{\lambda}+\lambda|z_0|^2 )^2  + \lambda^2 t_0^2]+[2\lambda(\frac{1}{\lambda}+\lambda |z_0|^2 )(|z|^2-2\text{Re} (\overline{z}\cdot z_0))]\\&
     +\lambda^2(|z|^2-2\text{Re} (\overline{z}\cdot z_0))^2 +\lambda^2(t-2\text{Im} (\overline{z}\cdot z_0))^2+2\lambda^2t_0(2 \text{Im} (\overline{z}\cdot z_0)-t)\\&
     =[(\frac{1}{\lambda}+\lambda |z_0|^2 )^2  + \lambda^2 t_0^2](1+Q(\frac{z}{\sqrt{\lambda} |\xi_0|} , \frac{t}{\lambda |\xi_0|^2})),
      \end{aligned}
    \end{equation}
     where $Q$ is a polynomial with uniformly bounded coefficients such that $Q(0)=0$ and $|Q(\frac{z}{\sqrt{\lambda} |\xi_0|} , \frac{t}{\lambda |\xi_0|^2})|\leq \frac{1}{2}$ in $B_{\frac{\sqrt{\lambda} |\xi_0|}{100}}$.
   Therefore, similar to \eqref{4-7}, we obtain
   \begin{equation}\label{4-11}
   \int_{\mathbb{H}^n }I(\xi) \, d\xi=  \frac{c_0^{p+1}}{[(\frac{1}{\lambda}+\lambda |z_0|^2 )^2  + \lambda^2 t_0^2]^{\frac{\beta(Q-2)}{4}}} \Big{[} \int_{\mathbb{H}^n} \frac{K(\xi)}{[(1+|z|^2)^2 +t^2]^{\frac{\alpha(Q-2)}{4}} } \, d\xi +o(1)\Big{]}.
   \end{equation}
   We complete the proof with \eqref{4-7} and \eqref{4-11}.
  \end{proof}

  Now, for giving gauge transformation $\mathfrak{g}_{\lambda ,\xi_0}\in \mathscr{G}$, we 
  set $$V=\mathfrak{g}_{\lambda ,\xi_0}U,~f=(U+V)^p-U^p- V^p,$$ then we have:
  \begin{lemma}\label{app3}
  Let $\lambda\leq 1$. For $Q>6$,  we have
   \begin{equation}\label{4-12}
   \int_{\mathbb{H}^n } f^{\frac{2Q}{Q+2}} \, d\xi \lesssim  \varepsilon^{\frac{Q}{2}}.
   \end{equation}
  \end{lemma}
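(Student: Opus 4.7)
Since $Q>6$ gives $p=(Q+2)/(Q-2) \in (1,2)$, I would begin from the elementary inequality $(a+b)^p - a^p - b^p \lesssim \max(a,b)^{p-1}\min(a,b)$, valid for $p\in(1,2]$ and $a,b\geq 0$. Applied pointwise, this yields
\[
f^{\frac{2Q}{Q+2}} \lesssim \max(U,V)^{A}\,\min(U,V)^{B}, \qquad A := \tfrac{8Q}{Q^2-4},\ B := \tfrac{2Q}{Q+2},
\]
and a direct check shows $A+B=2_Q^*$ while, crucially, $A<B$ exactly when $Q>6$.

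The plan is to split $\mathbb{H}^n=\{U>V\}\cup\{V\geq U\}$ and integrate each piece using the explicit structure of $U,V$, following the strategy of Lemmas \ref{app1}--\ref{app2}. A naive global bound via Lemma \ref{app1} applied to $\int U^A V^B\,d\xi$ would yield only $\varepsilon^{A(Q-2)/2}=\varepsilon^{4Q/(Q+2)}$, which for $Q>6$ is strictly weaker than the target $\varepsilon^{Q/2}$. The saving comes from the observation that on $\{U>V\}$ the factor $V^B$ (carrying the larger exponent $B>A$) is evaluated only where $V$ is the smaller bubble, and symmetrically on $\{V\geq U\}$.

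In Case A ($\varepsilon=\lambda$, $|\xi_0|\leq 1/\sqrt{\lambda}$), the set $\{U>V\}$ is identified by a direct computation analogous to \eqref{case1.1}; for instance, when $\xi_0=0$ it reduces to $\{|\xi|<1/\sqrt{\lambda}\}$. On this set $V\asymp\lambda^{(Q-2)/2}$, and since $A(Q-2)=8Q/(Q+2)<Q$ for $Q>6$, one has $\int_{|\xi|<1/\sqrt{\lambda}}U^A\,d\xi\asymp\lambda^{(A(Q-2)-Q)/2}$. Combining with $V^B\asymp\lambda^{B(Q-2)/2}$ and the identity $(A+B)(Q-2)=2Q$ produces exactly $\lambda^{Q/2}$. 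The complementary region $\{V\geq U\}$ splits into the middle annulus $\{1/\sqrt{\lambda}\leq|\xi|\leq 1/\lambda\}$, which again contributes $\lambda^{Q/2}$, and the far field $\{|\xi|\geq 1/\lambda\}$, which contributes $\lambda^{Q(Q-2)/(Q+2)}\leq\lambda^{Q/2}$. Case B ($\varepsilon=1/(\lambda|\xi_0|^2)$) is treated by the analogous decomposition centered at $\xi_0$, using the polynomial expansion \eqref{case1.2}.

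The main technical obstacle is that any interpolation (H\"older or AM-GM) that attempts to replace $\max(U,V)^A\min(U,V)^B$ by a symmetric product $U^\alpha V^\beta$ with $\alpha+\beta=2_Q^*$ is forced to the balanced choice $\alpha=\beta=Q/(Q-2)$, which by a direct calculation produces an unavoidable $|\log\varepsilon|$ factor, while any other choice degrades the exponent. Hence the restriction $\{U\gtrless V\}$ cannot be discarded, and the sharp bound must come from the explicit sub-region computation. The most delicate step is Case B, where the non-Euclidean Heisenberg metric governs the geometry of $\{U>V\}$ and the careful region-by-region integration used in the proof of Lemma~\ref{app2} has to be repeated with the additional restriction on the relative size of $U$ and $V$.
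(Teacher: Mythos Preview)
Your approach is correct and coincides with the paper's in all essential respects: both start from the pointwise bound $f\lesssim \max(U,V)^{p-1}\min(U,V)$ for $p\in(1,2)$, observe that a global application of Lemma~\ref{app1} would lose a power of~$\varepsilon$, and therefore integrate separately over the regions where $U$ dominates and where $V$ dominates.

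The paper organizes the computation slightly more economically. Instead of characterizing the level set $\{U>V\}$, it works with the fixed Heisenberg ball $B_{1/(2\sqrt{\varepsilon})}$ (contained in $\{U\gtrsim V\}$) and, in Case~B, the ball $B_{|\xi_0|/4}(\xi_0)$; this avoids your ``most delicate step'' entirely, since the geometry of the exact level set never enters. On the complementary region the paper does not split into your middle annulus and far field: it uses the alternative pointwise bound $f\lesssim U^{p}+V^{p-1}U$ (valid for $1<p<2$ via $(U+V)^{p-1}\le U^{p-1}+V^{p-1}$), giving $f^{2Q/(Q+2)}\lesssim U^{2_Q^*}+U^{B}V^{A}$, and then $\int_{B^{c}}U^{2_Q^*}\lesssim\varepsilon^{Q/2}$ is immediate while $\int_{B^{c}}U^{B}V^{A}\lesssim\varepsilon^{A(Q-2)/2}\int_{B^{c}}U^{B}\lesssim\varepsilon^{Q/2}$ uses precisely that $B(Q-2)>Q$. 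One small slip: in your Case~A the condition for $\varepsilon=\lambda$ is $|\xi_0|\le 1/\lambda$, not $1/\sqrt{\lambda}$.
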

  \begin{proof}
  Since $$V(\xi)\approx  \frac{1}{(\frac{1}{\sqrt{\lambda}} +\sqrt{\lambda} |\xi_0^{-1}\circ \xi| )^{Q-2}}, $$  in the region $B_{\frac{1}{2\sqrt{\varepsilon}}}$,   we have  $$U\gtrsim \varepsilon^{\frac{Q-2}{2}}   \approx V,
   \mbox{ so that } f\approx U^{p-1}V.$$  It follows that
    \[ \int_{ B_{\frac{1}{2\sqrt{\varepsilon}}}} f^{\frac{2Q}{Q+2}} \, d\xi \approx  \varepsilon^{\frac{Q(Q-2)}{Q+2}} \int_{B_{\frac{1}{2\sqrt{\varepsilon}}}}\frac{1}{1+|\xi|^{\frac{8Q}{Q+2}}} \, d\xi \approx \varepsilon^{\frac{Q}{2}}.  \]
   
    If $\varepsilon=\lambda$, since  $V\lesssim \varepsilon^{\frac{Q-2}{2}}$, then
    \begin{equation}\label{4-13}
    \int_{ B_{\frac{1}{2\sqrt{\varepsilon}}}^c} f^{\frac{2Q}{Q+2}} \, d\xi \lesssim  \int_{ B_{\frac{1}{2\sqrt{\varepsilon}}}^c} (U^{2_Q^*} +U^{\frac{2Q}{Q+2}} V^{\frac{8Q}{(Q+2)(Q-2)}} ) \, d\xi\lesssim \varepsilon^{\frac{Q}{2}}.
    \end{equation}
    If $\varepsilon=\frac{1}{\lambda |\xi_0|^2}$, we divide $B_{\frac{1}{2\sqrt{\varepsilon}}}^c$ into 
    $B_{\frac{1}{2\sqrt{\varepsilon}}}^c\cap B_{\frac{|\xi_0|}{4}} (\xi_0)$ and $B_{\frac{1}{2\sqrt{\varepsilon}}}^c\cap B_{\frac{|\xi_0|}{4}}^c (\xi_0).$
    In the region $B_{\frac{1}{2\sqrt{\varepsilon}}}^c\cap B_{\frac{|\xi_0|}{4}} (\xi_0)$, we have $$V\gtrsim U, \mbox{ thus } f\approx V^{p-1}U,$$ and
    we can deduce that
       \begin{equation}
    \begin{aligned}
    \int_{ B_{\frac{1}{2\sqrt{\varepsilon}}}^c\cap B_{\frac{|\xi_0|}{4}} (\xi_0)}  f^{\frac{2Q}{Q+2}} \, d\xi &\approx
    \int_{ B_{\frac{1}{2\sqrt{\varepsilon}}}^c\cap B_{\frac{|\xi_0|}{4}} (\xi_0)} U^{\frac{2Q}{Q+2}} V^{\frac{8Q}{(Q+2)(Q-2)}} \, d\xi\\
    &\lesssim |\xi_0|^{-\frac{2Q(Q-2)}{(Q+2)}} \int_{  B_{\frac{|\xi_0|}{4}}} \frac{1}{(\sqrt{\lambda}|\xi|)^{\frac{8Q}{(Q+2)}}} \, d\xi
    \\&\lesssim
    \lambda^{-\frac{4Q}{Q+2}}|\xi_0|^{-Q}\\&
    \lesssim  \varepsilon^{\frac{Q}{2}}.
    \end{aligned}
    \end{equation}
   In the region $B_{\frac{1}{2\sqrt{\varepsilon}}}^c\cap B_{\frac{|\xi_0|}{4}}^c (\xi_0)$,  we have
   
   \begin{equation}\label{4-15}
   \begin{aligned}
   \int_{ B_{\frac{1}{2\sqrt{\varepsilon}}}^c\cap B_{\frac{|\xi_0|}{4}}^c (\xi_0)} f^{\frac{2Q}{Q+2}} \, d\xi &\lesssim \int_{ B_{\frac{1}{2\sqrt{\varepsilon}}}^c\cap B_{\frac{|\xi_0|}{4}}^c (\xi_0)} (U^{2_Q^*} +U^{\frac{2Q}{Q+2}} V^{\frac{8Q}{(Q+2)(Q-2)}} ) \, d\xi\\&\lesssim \varepsilon^{\frac{Q}{2}}+ \int_{ B_{\frac{1}{2\sqrt{\varepsilon}}}^c\cap B_{\frac{|\xi_0|}{4}}^c (\xi_0)} U^{\frac{2Q}{Q+2}} V^{\frac{8Q}{(Q+2)(Q-2)}} \, d\xi\\
    &\lesssim     \varepsilon^{\frac{Q}{2}}+  \varepsilon^{\frac{4Q}{Q+2}}  \int_{ B_{\frac{1}{2\sqrt{\varepsilon}}}^c} \frac{1}{|\xi|^{\frac{2Q(Q-2)}{Q+2}}} \, d\xi
    \lesssim  \varepsilon^{\frac{Q}{2}}.
    \end{aligned}
    \end{equation}
Therefore, \eqref{4-12} follows from \eqref{4-13}-\eqref{4-15}.
  \end{proof}

  In the case $Q=6$, it holds that  $f= 2UV$. 
  And we have the following:
  \begin{lemma}\label{app4}
  Let $\lambda\leq 1$. For $Q=6$, we have
  \[ \|f\|_{\mathscr{D}^{-1}}^2 \lesssim \varepsilon^4|\log \varepsilon|.  \]
  \end{lemma}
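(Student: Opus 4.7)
Since $Q = 6$ gives $p = 2$, the expansion yields $f = (U+V)^2 - U^2 - V^2 = 2UV$, as noted. My plan is to use the duality $\|f\|_{\mathscr{D}^{-1}}^2 = \|w\|_{\mathscr{D}^1}^2 = 2\int UVw$, where $w = (-\Delta_{\mathbb{H}^n})^{-1}f \in \mathscr{D}^1$, and prove the key estimate $\int UVw \lesssim \varepsilon^2|\log\varepsilon|^{1/2}\|w\|_{\mathscr{D}^1}$; dividing then gives the claimed $\|f\|_{\mathscr{D}^{-1}}^2 \lesssim \varepsilon^4|\log\varepsilon|$.

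First I would use the gauge invariance identities \eqref{sym2}--\eqref{sym4} to reduce to the model configuration $\xi_0 = 0$, $\lambda = \varepsilon \le 1$, in which $U(\xi) \asymp (1+|\xi|^4)^{-1}$, $V(\xi) \asymp \lambda^2$ on $B_{1/\lambda}$, and $V(\xi) \asymp \lambda^{-2}|\xi|^{-4}$ on $B_{1/\lambda}^c$. Then I split $\mathbb{H}^n = B_1 \cup (B_{1/\lambda}\setminus B_1)\cup B_{1/\lambda}^c$ into the core of $U$, the overlap annulus, and the far field. On $B_1$ and $B_{1/\lambda}^c$, crude H\"older combined with the Sobolev inequality $\|w\|_{L^3}\lesssim \|w\|_{\mathscr{D}^1}$ bounds each contribution by $\lesssim \varepsilon^2\|w\|_{\mathscr{D}^1}$, with no logarithm. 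The delicate piece is the overlap annulus $B_{1/\lambda}\setminus B_1$, where $UV \asymp \lambda^2|\xi|^{-4}$: a plain $L^{3/2}$--$L^3$ H\"older here only yields $\lambda^2|\log\lambda|^{2/3}$, which is off by $|\log\varepsilon|^{1/6}$ from the target. To capture the sharp $|\log\lambda|^{1/2}$, I would invoke the Sobolev--Lorentz refinement $\mathscr{D}^1(\mathbb{H}^n) \hookrightarrow L^{2_Q^*, 2}(\mathbb{H}^n) = L^{3,2}(\mathbb{H}^n)$ (the Heisenberg analogue of O'Neil's refinement, provable by Hardy--Littlewood rearrangement on Haar measure) and its dual H\"older inequality for Lorentz norms,
\[\int_{B_{1/\lambda}\setminus B_1} UVw \leq C\,\|UV\|_{L^{3/2, 2}(B_{1/\lambda}\setminus B_1)}\|w\|_{L^{3, 2}} \lesssim \|UV\|_{L^{3/2, 2}(B_{1/\lambda}\setminus B_1)}\|w\|_{\mathscr{D}^1}.\]
Computing the distribution function of $UV \asymp \lambda^2|\xi|^{-4}$ on the annulus yields $\|UV\|_{L^{3/2, 2}(B_{1/\lambda}\setminus B_1)}^2 \approx \lambda^4|\log\lambda|$, producing the correct logarithm. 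Summing the three contributions gives $\|w\|_{\mathscr{D}^1}^2 \lesssim \varepsilon^2|\log\varepsilon|^{1/2}\|w\|_{\mathscr{D}^1}$, and dividing yields the claim.

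The main obstacle is extracting exactly $|\log\varepsilon|$ in the final bound: the standard Sobolev embedding alone captures only $|\log\varepsilon|^{4/3}$, because $L^{3/2}$ is the critical dual exponent and carries a logarithmic divergence that does not sharpen under plain H\"older. Recovering the correct exponent requires either the Lorentz refinement above or, more in the paper's direct-computation style, a sharp pointwise decay $|w(\xi)| \lesssim \varepsilon^2(1+|\xi|)^{-2}$ obtained from the Riesz-potential representation $w(\xi) = c\int UV(\eta) d(\xi,\eta)^{-4}d\eta$ via region-wise splitting of the kernel, combined with the direct integration $\int UV(\xi)(1+|\xi|)^{-2}d\xi \approx \varepsilon^2|\log\varepsilon|$; either route extracts the critical logarithm from the transition annulus. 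The subcase $\varepsilon = 1/(\lambda|\xi_0|^2) < \lambda$ is treated by the parallel three-region splitting adapted to the two distinct concentration points after translating one of them to the origin.
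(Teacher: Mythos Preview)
Your approach is correct but genuinely different from the paper's. The paper writes
\[
\|UV\|_{\mathscr{D}^{-1}}^2=\|\omega\|_{\mathscr{D}^1}^2=\iint_{\mathbb{H}^2\times\mathbb{H}^2}\frac{U(\xi)V(\xi)\,U(\eta)V(\eta)}{|\eta^{-1}\circ\xi|^4}\,d\xi\,d\eta
\]
and estimates this double integral by brute region-splitting (in the case $\varepsilon=\lambda$, decomposing over $B_{2/\lambda}\times\mathbb{H}^2$ and $B_{2/\lambda}^c\times B_{2/\lambda}^c$, then further over $|\xi|\lessgtr|\eta|$); in the case $\varepsilon=1/(\lambda|\xi_0|^2)$ it first derives pointwise bounds on $\omega(\eta)=\int UV(\xi)\,|\eta^{-1}\circ\xi|^{-4}\,d\xi$ and then integrates against $UV$. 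Your Lorentz--Sobolev route replaces all of this by a single duality estimate and the computation of $\|UV\|_{L^{3/2,2}}$, which is conceptually cleaner and isolates precisely why the logarithm appears (the $r^{-4}$ annulus is exactly borderline for $L^{3/2}$ but has finite $L^{3/2,2}$ norm). The paper's route, by contrast, is entirely self-contained and uses nothing beyond the explicit fundamental solution.

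Two small repairs are needed in your write-up. First, the gauge reduction to $\xi_0=0$ is not actually available: the group $\mathscr{G}$ moves both bubbles rigidly, so the \emph{relative} datum $(\lambda,\xi_0)$ is invariant. In the case $\varepsilon=\lambda$ this is harmless because the asymptotics $V\approx\lambda^2$ on $B_{2/\lambda}$ and $V\approx\lambda^{-2}|\xi|^{-4}$ outside hold uniformly for $\lambda|\xi_0|\le 1$, so your annulus computation survives verbatim; but you should say this rather than invoke a nonexistent reduction. Second, the embedding $\mathscr{D}^1(\mathbb{H}^n)\hookrightarrow L^{2_Q^*,2}$ does hold, but not via ``Hardy--Littlewood rearrangement'': symmetrization-based proofs of Sobolev--Lorentz embeddings (P\'olya--Szeg\H{o}) do not transfer to the Heisenberg group. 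The correct justification is the representation $u=(X\Gamma)*Xu$ with $X\Gamma$ homogeneous of degree $-(Q-1)$, combined with O'Neil's convolution inequality on the unimodular group $\mathbb{H}^n$, which gives $L^2*L^{Q/(Q-1),\infty}\hookrightarrow L^{2_Q^*,2}$.
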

  \begin{proof}
  Consider the equation
  \begin{equation}\label{4-16}
  -\Delta_{\mathbb{H}^2} \omega= UV,~~\omega \in \mathscr{D}^{1}.
  \end{equation}
Since up to some constant, the fundamental solution of $-\Delta_{\mathbb{H}^n}$ with pole at the origin is given by
$\frac{1}{|\xi|^{Q-2}}$ (cf. Folland-Stein \cite{FS}), we obtain
  \begin{equation}\label{4-17}
  \omega (\eta)=\int_{\mathbb{H}^2 }  \frac{U(\xi)V(\xi)}{|\eta^{-1}\circ \xi|^{4}} \, d\xi.
  \end{equation} 
 Multiplying \eqref{4-16} by $\omega$ and integrating over $\mathbb{H}^2 $, we obtain
  $$\|UV\|_{\mathscr{D}^{-1}}^2 = \|\omega\|_{\mathscr{D}^{1}}^2 = \int_{\mathbb{H}^2 } \int_{\mathbb{H}^2 } \frac{UV(\xi)UV(\eta)}{|\eta^{-1}\circ \xi|^{4}} \, d\xi  d\eta.$$

 If $\varepsilon=\lambda$, we note that
$$V(\xi) \approx \frac{1}{( \frac{1}{\sqrt{\lambda}} +\sqrt{\lambda}|\xi_0^{-1}\circ \xi|  )^{4}} \le \lambda^{2}, ~~  |\xi^0|\leq \frac{1}{\lambda},$$
   we obtain
 $$V  \approx \lambda^{2} \mbox{ in } B_{\frac{2}{\lambda}},$$
 and
 $$ V(\xi) \approx  \lambda^{-2}|\xi_0^{-1}\circ \xi|^{-4} \approx  \lambda^{-2}| \xi|^{-4} \mbox{ in } B_{\frac{2}{\lambda}}^c. $$
It follows that
\begin{align*}
 \iint_{ B_{\frac{2}{\lambda}}\times  \mathbb{H}^2} \frac{UV(\xi)UV(\eta)}{|\eta^{-1}\circ \xi|^{4}} \, d\xi  d\eta
&\lesssim  \iint_{ B_{\frac{2}{\lambda}}\times  \mathbb{H}^2} \frac{\lambda^4}{|\eta^{-1}\circ \xi|^{4}(1+|\xi|^4)(1+|\eta|^4)}  \, d\xi  d\eta.
\end{align*}
We make the decomposition
$$\iint_{ B_{\frac{2}{\lambda}}\times  \mathbb{H}^2}=\int_{B_{\frac{2}{\lambda}}} \int_{\{|\xi|<\frac{|\eta|}{2}\}} +\int_{B_{\frac{2}{\lambda}}} \int_{\{\frac{|\eta|}{2} \le |\xi|\le 2|\eta|\}} +\int_{B_{\frac{2}{\lambda}}} \int_{\{|\xi|>2|\eta|\}} .$$
By direct calculation, we have
\begin{equation}\label{4-18}
\begin{aligned}
\int_{B_{\frac{2}{\lambda}}} \int_{\{|\xi|<\frac{|\eta|}{2}\}} & \frac{\lambda^4}{|\eta^{-1}\circ \xi|^{4}(1+|\xi|^4)(1+|\eta|^4)}  \, d\xi  d\eta \\
&\approx \int_{B_{\frac{2}{\lambda}}} \int_{\{|\xi|<\frac{|\eta|}{2}\}} \frac{\lambda^4}{|\eta|^{4}(1+|\xi|^4)(1+|\eta|^4)}  \, d\xi  d\eta \\
&\lesssim  \int_{B_{\frac{2}{\lambda}}} \frac{\lambda^4}{|\eta|^{2}(1+|\eta|^4)} d\eta\\
&\lesssim  \lambda^4 \log |\frac{1}{\lambda}|,
\end{aligned}
\end{equation}
\begin{equation}
\begin{aligned}
\int_{B_{\frac{2}{\lambda}}} \int_{\{\frac{|\eta|}{2}  \le |\xi|\le 2|\eta|\}} &\frac{\lambda^4}{|\eta^{-1}\circ \xi|^{4}(1+|\xi|^4)(1+|\eta|^4)}  \, d\xi  d\eta \\
&\lesssim \int_{B_{\frac{2}{\lambda}}} \int_{\{\frac{1}{2}  \le |\xi|\le 2\}} \frac{\lambda^4 }{|\eta|^2 |\tau_{\delta_{\frac{1}{|\eta|}} (\eta) }^{-1}( \xi)|^{4}  (1+|\eta|^4)}  \, d\xi  d\eta \\
&\lesssim \int_{B_{\frac{2}{\lambda}}} \frac{\lambda^4 }{|\eta|^2 ( 1+|\eta|^4 )}   d\eta \\
&\lesssim \lambda^4 \log |\frac{1}{\lambda}| ,
\end{aligned}
\end{equation}
and
\begin{equation}
\begin{aligned}
\int_{B_{\frac{2}{\lambda}}} \int_{\{|\xi|>2|\eta|\}} &\frac{\lambda^4}{|\eta^{-1}\circ \xi|^{4}(1+|\xi|^4)(1+|\eta|^4)}  \, d\xi  d\eta \\
&\approx \int_{B_{\frac{2}{\lambda}}} \int_{\{|\xi|>2|\eta| \}} \frac{\lambda^4}{|\xi|^{4}(1+|\xi|^4)(1+|\eta|^4)}  \, d\xi  d\eta \\
&\lesssim \int_{B_{\frac{2}{\lambda}}} \frac{\lambda^4}{|\eta|^{2} (1+|\eta|^4)} d\eta \\
&\lesssim \lambda^4 \log |\frac{1}{\lambda}| .
\end{aligned}
\end{equation}
On the other hand, in the region $B_{\frac{2}{\lambda}}^c \times B_{\frac{2}{\lambda}}^c $, we have
\begin{equation}
\begin{aligned}
\iint_{ B_{\frac{2}{\lambda}}^c\times  B_{\frac{2}{\lambda}}^c} \frac{UV(\xi)UV(\eta)}{|\eta^{-1}\circ \xi|^{4}} \, d\xi  d\eta
\lesssim  \iint_{ B_{\frac{2}{\lambda}}^c\times  B_{\frac{2}{\lambda}}^c} \frac{1}{\lambda^4 |\eta^{-1}\circ \xi|^{4}|\xi|^8|\eta|^8}  \, d\xi  d\eta.
\end{aligned}
\end{equation}
Through direct computation, we obtain
\begin{equation}
\begin{aligned}
\int_{B_{\frac{2}{\lambda}}^c} \int_{\{\frac{2}{\lambda} \le |\xi|\le \frac{|\eta|}{2}\}} &\frac{1}{\lambda^4 |\eta^{-1}\circ \xi|^{4}|\xi|^8|\eta|^8}  \, d\xi  d\eta \\
&\approx \int_{B_{\frac{4}{\lambda}}^c} \int_{\{\frac{2}{\lambda} \le |\xi|\le \frac{|\eta|}{2}\}} \frac{1}{\lambda^4 |\eta |^{4}|\xi|^8|\eta|^8}  \, d\xi  d\eta \\
&\approx \int_{\frac{4}{\lambda}}^{+\infty} \frac{(\frac{\lambda^2}{4}-\frac{4}{r^2 })r^5 }{\lambda^4 r^{12}} dr \lesssim \lambda^4 ,
\end{aligned}
\end{equation}
\begin{equation}
\begin{aligned}
\int_{B_{\frac{2}{\lambda}}^c} \int_{\frac{|\eta|}{2}\le |\xi|\le 2|\eta| } &\frac{1}{\lambda^4 |\eta^{-1}\circ \xi|^{4}|\xi|^8|\eta|^8}  \, d\xi  d\eta\\
&\approx \int_{B_{\frac{2}{\lambda}}^c} \int_{\{\frac{1}{2} \le |\xi|\le 2 \}} \frac{1}{\lambda^4 |\eta|^{14} |\tau_{\delta_{\frac{1}{|\eta|}} (\eta) }^{-1}( \xi)|^{4} }  \, d\xi  d\eta \\
&\lesssim \int_{B_{\frac{2}{\lambda}}^c} \frac{1}{\lambda^4 |\eta|^{14} }    d\eta \lesssim \lambda^4 ,
\end{aligned}
\end{equation}
and
\begin{equation}\label{4-24}
\begin{aligned}
\int_{B_{\frac{2}{\lambda}}^c} \int_{\{|\xi|\ge 2|\eta| \}}& \frac{1}{\lambda^4 |\eta^{-1}\circ \xi|^{4}|\xi|^8|\eta|^8}  \, d\xi  d\eta \\
&\approx \int_{B_{\frac{2}{\lambda}}^c} \int_{\{  |\xi|\ge 2|\eta|\}} \frac{1}{\lambda^4 |\xi|^{12}|\eta|^8}  \, d\xi  d\eta \\
&\approx \int_{B_{\frac{2}{\lambda}}^c} \int_{\{  |\xi|\ge 2|\eta|\}} \frac{1}{\lambda^4 |\eta|^{14} } d\eta \lesssim \lambda^4 .
\end{aligned}
\end{equation}
Thus, by \eqref{4-18}-\eqref{4-24}, we obtain
\begin{equation}\label{4-25}
 \int_{\mathbb{H}^2 } \int_{\mathbb{H}^2 } \frac{UV(\xi)UV(\eta)}{|\eta^{-1}\circ \xi|^{4}} \, d\xi  d\eta\lesssim \lambda^4|\log \lambda|.
\end{equation}

If $\varepsilon= \frac{1}{\lambda |\xi_0|^2}$, then $|\xi^0|\geq \frac{1}{\lambda}$ . We have
 \[ V(\xi)\approx \frac{\lambda^{2}}{1+\lambda^{4}|\xi_0^{-1}\circ \xi|^{4} }, ~U\approx |\xi_0|^{-4}\mbox{ in } B_{\frac{|\xi_0|}{2}}(\xi_0),  \]
 and
 \[  V(\xi) \approx \lambda^{-2}|\xi_0^{-1}\circ \xi|^{-4} \mbox{ in } B_{\frac{|\xi_0|}{2}}^c(\xi_0). \]
 As a consequence, we obtain
 \begin{equation}\label{4-26}
 \begin{aligned}
 \int_{\mathbb{H}^2 } \frac{UV(\xi)}{|\eta^{-1}\circ \xi|^{4}} \, d\xi =& (\int_{B_{\frac{|\xi_0|}{2}}(\xi_0)} +\int_{ B_{\frac{|\xi_0|}{2}}^c(\xi_0)} )\frac{UV(\xi)}{|\eta^{-1}\circ \xi|^{4}} \, d\xi
 \\\approx&
 \int_{B_{\frac{|\xi_0|}{2}}(\xi_0)}  |\xi_0|^{-4} \frac{\lambda^{2}}{1+\lambda^{4}|\xi_0^{-1}\circ \xi|^{4}}\frac{1}{|\eta^{-1}\circ \xi|^{4}} \, d\xi
 \\&+\int_{ B_{\frac{|\xi_0|}{2}}^c(\xi_0)} \frac{\lambda^{-2}|\xi_0^{-1}\circ \xi|^{-4}}{(1+|\xi|^4)|\eta^{-1}\circ \xi|^{4}} \, d\xi .
 \end{aligned}
 \end{equation}
 If 
 $\eta \in B_{\frac{2|\xi_0 |}{3} }  (\xi_0 )$, 
be setting $\zeta=\xi_0^{-1}\circ \eta$, we obtain

 \begin{equation}
 \begin{aligned}
 \int_{B_{\frac{|\xi_0|}{2}}(\xi_0) }  & \frac{\lambda^{2}}{1+\lambda^{4}|\xi_0^{-1}\circ \xi|^{4}}\frac{1}{|\eta^{-1}\circ \xi|^{4}}\, d\xi 
 = \int_{B_{\frac{|\xi_0|}{2}}}  \frac{\lambda^{2}}{1+\lambda^{4}|\xi|^{4}}\frac{1}{|\zeta^{-1}\circ \xi|^{4}}  \, d\xi \\&
 \lesssim \int_{\{|\xi|\le \frac{|\zeta|}{2} \}} \frac{\lambda^{2}}{1+\lambda^{4} |\xi|^{4}}\frac{1}{|\zeta|^{4}}\, d\xi
  +\int_{\{2|\zeta|\le |\xi|\}}\frac{\lambda^2}{1+\lambda^{4} |\xi|^{4}}\frac{1}{|\xi|^{4}}  \, d\xi \\&
  + \int_{\{\frac{1}{2}\le |\xi|\le 2\}}  \frac{\lambda^{2}|\zeta|^2 }{  (1+\lambda^{4} |\zeta|^{4})}\frac{1}{| \tau_{\delta_{\frac{1}{|\zeta|}}(\zeta)}^{-1}(\xi)  |^{4}} \, d\xi\\
 &
 \approx \frac{1  }{\lambda^4  |\zeta|^4 } \int_0^{\frac{\lambda|\zeta|}{2}}  \frac{ r^5 }{1+ r^4 }  d r + \int_{2\lambda|\zeta |}^{+\infty}\frac{r}{1+ r^4 }  d  r +\frac{\lambda^2 |\zeta|^2 }{ 1+\lambda^4 |\zeta|^4 }  \\&
 \lesssim  \frac{1}{ 1+\lambda^2 |\xi_0^{-1} \circ \eta|^2 } .
 \end{aligned}
 \end{equation}
If $\eta\in B_{\frac{2 |\xi_0|}{3}}^c(\xi_0) $, then 
\begin{equation}
\begin{aligned}
\int_{B_{\frac{|\xi_0|}{2}}(\xi_0)} \frac{\lambda^{2}}{1+\lambda^{4}|\xi_0^{-1}\circ \xi|^{4}}\frac{1}{|\eta^{-1}\circ \xi|^{4}} \, d\xi
&\approx \int_{B_{\frac{|\xi_0|}{2}}}  \frac{\lambda^{2}}{1+\lambda^{4} |\xi|^{4}}\frac{1}{|\zeta|^{4}}\, d\xi \\&
 \approx \frac{|\xi_0|^2}{\lambda^2|\xi_0^{-1}\circ \eta|^4}.
\end{aligned}
\end{equation}
It follows that
  \begin{equation}\label{4-29}
 \begin{aligned}
 \int_{B_{\frac{|\xi_0|}{2}}(\xi_0)} & |\xi_0|^{-4} \frac{\lambda^{2}}{1+\lambda^{4}|\xi_0^{-1}\circ \xi|^{4}}\frac{1}{|\eta^{-1}\circ \xi|^{4}} \, d\xi \\
 \lesssim &\frac{\chi_{\{ \eta\in B_{\frac{2 |\xi_0|}{3}}(\xi_0) \}} }{|\xi_0|^4(1+\lambda^2|\xi_0^{-1}\circ \eta|^2)} +\frac{\chi_{\{ \eta\in B_{\frac{2 |\xi_0|}{3}}^c(\xi_0) \}}}{\lambda^2|\xi_0^{-1}\circ \eta|^4|\xi_0|^2}.
 \end{aligned}
 \end{equation}
On the other hand, we can deduce that
  \begin{equation}\label{4-32}
 \begin{aligned}
\int_{ B_{\frac{|\xi_0|}{2}}^c(\xi_0)} & \frac{\lambda^{-2}|\xi_0^{-1}\circ \xi|^{-4}}{(1+|\xi|^4)|\eta^{-1}\circ \xi|^{4}} \, d\xi \\
&\lesssim \lambda^{-2}|\xi_0|^{-4} \int_{\mathbb{H}^2} \frac{1}{(1+|\xi|^4)|\eta^{-1}\circ \xi|^{4}} \, d\xi\\&
\lesssim \frac{1}{\lambda^2  |\xi_0|^4(1+|\eta|^2) }.
 \end{aligned}
 \end{equation}
Thus, \eqref{4-26}, \eqref{4-29} and \eqref{4-32} yield that
 \[  \int_{\mathbb{H}^2 }\frac{UV(\xi)}{|\eta^{-1}\circ \xi|^{4}} \, d\xi \lesssim
  \frac{\chi_{\{ \eta\in B_{\frac{2 |\xi_0|}{3}}(\xi_0) \}}}{|\xi_0|^4(1+\lambda^2|\xi_0^{-1}\circ \eta|^2)}
  +\frac{\chi_{\{ \eta\in B_{\frac{2|\xi_0|}{3}}^c(\xi_0) \}}}{\lambda^2|\xi_0|^4 (1+|\eta|^2) }
.
  \]
We conclude that
  \begin{equation}\label{4-33}
  \begin{aligned}
  \iint_{\mathbb{H}^2 \times \mathbb{H}^2 } & \frac{UV(\xi)UV(\eta)}{|\eta^{-1}\circ \xi|^{4}} \, d\xi  d\eta \\
  \lesssim &
  \int_{\{ \eta\in B_{\frac{2 |\xi_0|}{3}}(\xi_0) \}} \frac{\lambda^2}{{|\xi_0|^8(1+\lambda^6|\xi_0^{-1}\circ \eta|^6)}} d\eta\\
  &+\int_{\{ \eta\in B_{\frac{2|\xi_0|}{3}}^c(\xi_0) \cap B_{|\xi_0|}\}} \frac{1}{\lambda^4|\xi_0|^8 (1+|\eta|^6) } d\eta \\
  &+\int_{\{ \eta\in B_{\frac{2|\xi_0|}{3}}^c(\xi_0) \cap B_{|\xi_0|}^c\}}\frac{1}{\lambda^4|\xi_0|^4|\eta|^{10}  } d\eta\\
  \lesssim & \frac{\log|\lambda|\xi_0|^2|}{\lambda^4|\xi_0|^8}.
  \end{aligned}
  \end{equation}
  Therefore, the conclusion follows from \eqref{4-25} and \eqref{4-33}.
   \end{proof}

\section*{Acknowledgements}
Hua Chen is supported by National Natural Science Foundation of China (Grant Nos. 12131017, 12221001) and National Key R\&D Program of China (no. 2022YFA1005602).

 \section*{Declarations}
On behalf of all authors, the corresponding author states that there is no conflict of interest. Our manuscript has no associated data.

\end{document}